\newtheorem{dummy}{Dummy}[section]
\newtheorem{prop}[dummy]{Proposition}
\newtheorem{cor}[dummy]{Corollary} 
\newtheorem{theorem}[dummy]{Theorem}
\newtheorem{lem}[dummy]{Lemma}
\theoremstyle{definition}
\theoremstyle{definition}
\newtheorem{dfn}[dummy]{Definition}
\newtheorem{rem}[dummy]{Remark}
\newtheorem{ntz}[dummy]{Notation}
\theoremstyle{remark}
\newcommand{\N}{\mathbb{N}}
\newcommand{\Z}{\mathbb{Z}}
\newcommand{\Q}{\mathbb{Q}}
\newcommand{\K}{\mathbb{K}}
\newcommand{\loc}{\mathrm{{loc}}}
\newcommand{\thin}{\mathrm{{thin}}}
\renewcommand{\mod}{\,\,\mathrm{{mod}}\,\,}
\newcommand{\simp}{\mathrm{{simp}}}
\newcommand{\maxh}{\mathrm{{maxh}}}
\newcommand{\height}{\mathrm{{height}}}
\newcommand{\minh}{\mathrm{{minh}}}
\newcommand{\diam}{\mathrm{{diam}}}
\newcommand{\stab}{\mathrm{{Stab\,}}}
\renewcommand{\max}{\mathrm{{max}}}
\newcommand{\morf}{\hom}
\newcommand{\rel}{\mathrm{rel}}
\newcommand{\St}{\,\mathrm{St}\,}
\newcommand{\st}{\,\mathrm{St}\,}
\renewcommand{\:}{\mathrm{\,:\,}}
\newcommand{\obj}{\mathrm{\,obj\, }}
\renewcommand{\hom}{\mathrm{\,Hom\,}}
\newcommand{\id}{\mathrm{\,Id\,}}
\newcommand{\supp}{\mathrm{\,Supp\,}}
\newcommand{\R} {\ensuremath {\mathbb{R}}}
\author{Federico Franceschini}
\address{Dipartimento di Matematica \\
Universit\`a di Pisa \\
Largo B.~Pontecorvo 5 \\
56127 Pisa, Italy}
\email{franceschini@mail.dm.unipi.it}
\title[Bounded cohomology and relative hyperbolicity]{A characterization of relatively hyperbolic groups via bounded cohomology}
\subjclass[2010]{20F67, 20F65, 55N35} 
\keywords{Relative bounded cohomology, relatively hyperbolic groups, Rips complex, comparison map, straightening}
\thanks{}
\begin{document}

\begin{abstract} It was proved by Mineyev and Yaman that, if $(\Gamma, \Gamma')$ is a relatively hyperbolic pair, the comparison map 
$$ H_b^k(\Gamma, \Gamma'; V) \to H^k(\Gamma, \Gamma'; V) $$
is surjective for every $k \ge 2$, and any bounded $\Gamma$--module $V$. By exploiting results of Groves and Manning, we give another proof of this result. Moreover, we prove the opposite implication under weaker hypotheses than the ones required by Mineyev and Yaman.
\end{abstract}

\maketitle

\section{Introduction}

The relations between bounded cohomology and geometric group theory have been proved to be fruitful on several occasions. For instance, the second bounded cohomology with real coefficients of most hyperbolic groups has uncountable dimension (\cite{EpsteinFujiwara1997}). This results generalizes an analogous fact for free non-abelian groups (see \cite{Brooks}, or \cite{Rolli} for a simpler proof) and was in turn extended by considering groups acting properly discontinuously on Gromov hyperbolic spaces (\cite{Fujiwara1998}). The proper discontinuity condition was weakened in order to include other interesting classes of group actions on Gromov hyperbolic spaces where a Brook's type argument could be applied. For example, the WPD (weakly properly discontinuous) property and the acylindrical hyperbolicity were introduced by Bestvina and Fujiwara (\cite{BestFuji2002}) and Bowditch (\cite{Bow2003}) respectively in order to study actions of mapping class groups on curve complexes. The second bounded cohomology for more complicated coefficients of (most) acylindrically hyperbolic groups was shown to be infinite-dimensional in \cite{HullOsin2013} and \cite{BestBrombFuji}.

Two other cases somehow opposite to each other are the characterization of amenability in terms of the vanishing of bounded cohomology (\cite{Johnson}) and the characterization of Gromov hyperbolicity of groups in term of the surjectivity of the comparison map in higher degrees (\cite{Min2}). The last two examples could be exploited to prove that the simplicial volume of connected closed (and aspherical of dimension at least 2) oriented manifolds with amenable (Gromov hyperbolic) fundamental group vanishes (is nonzero).

In the present paper we will consider a generalization of Mineyev's result to the relative setting. The absolute case was considered by Mineyev in \cite{Min1} and \cite{Min2}. He proved that, if $\Gamma$ is hyperbolic, the comparison map $ H_b^k(\Gamma, V) \to H^k(\Gamma, V) $ is surjective for every $k \ge 2$ and every bounded $\Gamma$--module $V$. Viceversa, if $\Gamma$ is finitely presented and the comparison map $ H_b^2(\Gamma, V) \to H^2(\Gamma, V) $ is surjective for every bounded $\Gamma$--module $V$, then $\Gamma$ is hyperbolic (actually, it was proven by Gromov and Rips that hyperbolic groups are finitely presented: see \cite[Corollary 2.2.A]{GromovHG} or \cite[Théorème 2.2]{CDP}).

In this work we consider a relative version of the results of \cite{Min1} and \cite{Min2} which holds for group-pairs, i.e. couples $(\Gamma, \Gamma')$ where $\Gamma$ is a group and $\Gamma'$ is a finite family of subgroups of $\Gamma$. The following is our main result (see Section \ref{sezione 2} for the definitions of the terms involved).
\begin{theorem} \label{main MIO theorem} Let $(\Gamma, \Gamma')$ be a group-pair. \begin{enumerate}
    \item[(a)] If $(\Gamma, \Gamma')$ is relatively hyperbolic the comparison map
$$ H_b^k(\Gamma, \Gamma'; V) \to H^k(\Gamma, \Gamma'; V) $$
is surjective for every bounded $\Gamma$--module $V$ and $k \ge 2$.
    \item[(b)] Conversely, if $(\Gamma, \Gamma')$ is a finitely presented group-pair such that $\Gamma$ is finitely generated and the comparison map is surjective in degree $2$ for any bounded $\Gamma$--module $V$, then $(\Gamma, \Gamma')$ is relatively hyperbolic. 
\end{enumerate}
\end{theorem}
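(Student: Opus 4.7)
The plan is to deduce both parts from the corresponding absolute statements by passing to the Groves--Manning cusped space $X = X(\Gamma,\Gamma')$, obtained from the Cayley graph of $\Gamma$ by gluing a combinatorial horoball onto each coset of a peripheral subgroup. Groves and Manning proved that $(\Gamma,\Gamma')$ is relatively hyperbolic if and only if $X$ is Gromov hyperbolic, and one can identify the relative bounded cohomology $H_b^*(\Gamma,\Gamma';V)$ (respectively, the relative cohomology $H^*(\Gamma,\Gamma';V)$) with the $\Gamma$--equivariant bounded (respectively, unbounded) cohomology of the pair $(X,X_0)$, where $X_0 \subset X$ is the disjoint union of the horoballs. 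Under this dictionary, both parts of the theorem become assertions about absolute (bounded) cohomology of a hyperbolic complex.

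For part (a), I would adapt Mineyev's proof of the absolute result. Its core is the construction of a \emph{bounded homological bicombing} on a hyperbolic complex, which yields a straightening chain-homotopy from arbitrary cochains to bounded ones, and hence surjectivity of the comparison map in degrees $k \ge 2$. Since $X$ is Gromov hyperbolic, Mineyev's construction applies (modulo the technical issue discussed below), and the resulting straightening respects $X_0$ up to bounded error because combinatorial horoballs are convex in $X$. The absolute surjectivity then descends along the long exact sequence of the pair $(X,X_0)$, and, via the above identification, to the relative comparison map.

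For part (b), I would follow the classical Gersten--Mineyev blueprint in its relative form. A finite relative presentation of $(\Gamma,\Gamma')$ yields a relative Cayley $2$--complex $K$ whose $2$--cells are the orbits of the finitely many non-peripheral relators. Let $E_2$ be a set of $\Gamma$--orbit representatives of such $2$--cells and put $V = \ell^\infty(\Gamma \cdot E_2)$. There is a natural ``relative area'' $2$--cocycle $c \in Z^2(\Gamma,\Gamma';V)$ that records, for each loop, the coefficients of some chosen $2$--chain filling it in $K$. By hypothesis, the class of $c$ lifts to a bounded cochain, which means that every word of length $n$ that is trivial in $\Gamma$ admits a filling in $K$ of $\ell^1$--norm at most $Cn$. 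This is precisely a linear relative isoperimetric inequality, which by Osin's characterization of relative hyperbolicity forces $(\Gamma,\Gamma')$ to be relatively hyperbolic.

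The main obstacle I expect lies in part (a): Mineyev's bicombing is designed for locally finite hyperbolic complexes, whereas $X$ has infinitely many simplices meeting each vertex adjacent to a horoball. One must either construct the bicombing directly on $X$ by exploiting the explicit vertical-geodesic structure of combinatorial horoballs (which retract equivariantly onto their bounding cosets), or replace $X$ by a $\Gamma$--equivariant, locally finite, quasi-isometric model and then push Mineyev's homotopy back to $X$. A subtler but more mechanical point in (b) is to ensure that the cocycle $c$ is genuinely relative, i.e.\ built from the relative Cayley $2$--complex rather than an absolute one; otherwise boundedness of its primitive would encode only an absolute linear isoperimetric inequality, which does not imply relative hyperbolicity.
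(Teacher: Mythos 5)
Your strategy for part (a) contains a genuine gap at the step ``the absolute surjectivity then descends along the long exact sequence of the pair $(X,X_0)$.'' The equivariant (bounded) cohomology of $X$ and of $X_0$ compute $H_{(b)}^*(\Gamma;V)$ and $\bigoplus_i H_{(b)}^*(\Gamma_i;V)$, and to extract surjectivity of $H_b^k(\Gamma,\Gamma';V)\to H^k(\Gamma,\Gamma';V)$ from the morphism of long exact sequences you would need, via the four lemma, surjectivity of the absolute comparison maps for $\Gamma$ and for the peripheral subgroups $\Gamma_i$ in neighbouring degrees. Neither holds: the $\Gamma_i$ are arbitrary finitely generated groups (already for $\Gamma=\Z^2\ast\Z^2$ relative to the two factors, $H^2(\Gamma;\R)\neq 0$ while $H^2_b(\Z^2;\R)=0$, so even the absolute comparison map for $\Gamma$ fails to be surjective). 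This is precisely why the paper does \emph{not} reduce to the absolute case. Instead it works directly with the relative complex $\St_*^{\rel}=\St_*/\St_*'$ and builds chain maps $\varphi_*\colon C_*(\mathscr R_\kappa(X))\to\St_*$ and $\psi_*\colon\St_*\to C_*(\mathscr R_\kappa(X))$ such that $\psi_*$ decomposes as $z_*+w_*$, where $w_*$ lands entirely inside deep ($C$--)horoballs --- where every \emph{relative} cochain pulled back by $\varphi_*$ vanishes --- and $z_*$ has uniformly bounded $\ell^1$--norm and uniformly bounded height, so that $f\circ\varphi_*\circ\psi_*=f\circ\varphi_*\circ z_*$ is bounded. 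Producing this decomposition requires more than a $1$--dimensional bicombing (which Groves--Manning already supply on the cusped space, resolving the local-finiteness worry you raise): one needs quantitative higher-dimensional filling results in the Rips complex over $X$ (the local lemma, the thin-cycle filling, and the ``graph-like cycle'' theorems of Sections 4--5), with fillings that stay inside $C$--horoballs whenever the cycles do. Your proposal does not supply, or even identify, this inductive higher-degree straightening, and no formal long-exact-sequence argument can substitute for it.

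Part (b) is much closer to the paper's argument (both are descendants of the Gersten--Mineyev scheme), but with two caveats. First, your coefficient module $\ell^\infty(\Gamma\cdot E_2)$ and ``area cocycle'' is a legitimate variant of the paper's choice $V=(B_1(\widehat X),\|\cdot\|_f)$ with cocycle $u=\partial_2$; your remark about the cocycle having to be built from the \emph{relative} (quotient) complex is exactly right and is implemented in the paper by collapsing the peripheral cosets to points. Second, the bounded primitive yields a \emph{linear homological} filling inequality over $\R$ (equivalently $\Q$), not the combinatorial relative Dehn function appearing in Osin's characterization; identifying the two is false in general and unnecessary here. The correct bridge is the one the paper uses: linearity of $FV_{\widehat X,\Q}$ implies hyperbolicity of $\widehat X^{(1)}$ (Groves--Manning) and fineness (Pedroza), after which Bowditch's graph-theoretic definition of relative hyperbolicity applies. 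If you replace the appeal to Osin by these homological results, your part (b) becomes essentially the paper's proof.
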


Roughly speaking, the group-pair $(\Gamma, \Gamma')$ is finitely presented if there is a presentation for $\Gamma$ in the alphabet $\bigsqcup_{i \in I} \Gamma_i \sqcup \mathscr A$ -- where $ \mathscr A \subset \Gamma$ is a finite set -- such that only finitely many relations involve elements of $ \mathscr A$. See Definition \ref{q3o47bfihuadjks} for more details. 

Mineyev and Yaman proved in \cite{MY} a similar theorem. In particular, they proved (a), while the opposite implication was proved only under stronger hypotheses than (b) above (see \cite[Theorem 59]{MY}).

In an article of Groves and Manning (\cite{GM}) written shortly thereafter, several useful results are proved which seem to provide an alternative strategy to prove (a) of Theorem \ref{main MIO theorem}. Indeed, quoting from \cite[p. 4]{GM}: \begin{quote} `` In particular, in \cite{GM2}, we define a homological bicombing on the coned-off Cayley graph of a relatively relatively hyperbolic group (using the bicombing from this paper in an essential way) in order to investigate relative bounded cohomology and relatively hyperbolic groups, in analogy with \cite{Min1} and \cite{Min2}. ''
\end{quote}
The article \cite{GM2} was referred to as ``in preparation'', and has never appeared. It was our aim to provide such a proof. We take a small detour from the strategy outlined in the quotation above, since we will use the \emph{cusped-graph} defined in \cite{GM} instead of the coned-off Cayley graph.

In \cite{MY} a weaker version of (b) is also considered. However, such implication was proved under additional \emph{finiteness} hypotheses about the action of $\Gamma$ on a graph or complex, which seem to be far more restricting than the finite presentability in the absolute case. By making use of recent results in a paper of Pedroza \cite{pedro}, we will be able to prove this implication with a proof similar to the one in \cite{MY}, but without mentioning $\Gamma$--actions in the statement.

In Section \ref{sezione 7} we give two applications. The first one is a straightforward consequence of Theorem \ref{main MIO theorem} (a) and was already proved in \cite{MY}: if the topological pair $(X, Y)$ is a classifying-pair for $(\Gamma, \Gamma')$, then the Gromov norm on $H_k(X, Y)$ -- which in general is merely a semi-norm -- is actually a norm, for $k \ge 2$. This implies in particular interesting non-vanishing results for some classes of compact manifolds with boundary. The second application easily follows from our Rips complex construction, and can be obtained in the same way from an analogous construction in \cite[Section 2.9]{MY}. It states that, for a hyperbolic pair $(\Gamma, \Gamma')$, there is $n \in \N$ such that, for any $\Gamma$--module $V$, the relative (non-bounded) cohomology of $(\Gamma, \Gamma')$ with coefficients in $V$ vanishes in dimensions at least $n$.

The plan of the paper is as follows. In sections \ref{sezione 2} and \ref{sezione 3} we recall some definitions and results from \cite{MY} and \cite{GM} (some technicalities pertaining to Section \ref{sezione 2} are addressed later in the first addendum). In sections \ref{sezione 3}, \ref{sezione 4} and \ref{sezione 5} we introduce a Rips complex construction as our main tool, and prove some filling-inequalities of its simplicial chain complex, which will allow us to prove Theorem \ref{main MIO theorem} (a) in Section \ref{sezione 6}. In the following section we give the applications already mentioned. In Section \ref{sezione 8} we recall some results in \cite{pedro} and prove Theorem \ref{main MIO theorem} (b). In the second addendum we show that the definitions of relative bounded cohomology given in \cite{MY} and \cite{Blank} respectively are isometric.

\subsection*{Acknowledgements} This work is part of a Ph.D. project that I am developing under the supervision of Roberto Frigerio. I would like to thank him for several conversations on this problem, and for having carefully read previous versions of the present work. I am also grateful to Matthias Blank and Clara L\"oh for a pleasant time I spent in Regensburg, where some of the contents of this article were discussed.

\section{\label{sezione 2}Preliminaries}

Several definitions and results in this section are taken from \cite{MY}. 

\label{g3fehrclns} Given a set $S$, let $\R S$ be the vector space with basis $S$. Then $S$ induces a natural $\ell^1$--norm $\|\cdot\|$ on $\R S$
$$ \Big\|\sum_{s \in S} \lambda_s s \Big\| := \sum_{s \in S} |\lambda_s| $$
(where almost all coefficients $\lambda_s$ are null). We denote by $C_*(S)$ the complex defined by
$$ C_{k}(S) = \left\{0\right\} \quad \textrm{if } k \le -1 \qquad C_{k}(S) = \R S^{k+1} \quad \textrm{if } k \ge 0, $$
with boundary operator given by
$$ \partial_k(s_0, \ldots, s_k) := \sum_{j=0}^k (-1)^k (s_0, \ldots, \hat s_j, \ldots, s_k). $$
Notice that $ \partial_k$ is a bounded linear operator for every $k$. If $\Gamma$ is a group acting on $S$, then $\Gamma$ also acts diagonally on $C_k(S)$ via isometries, and $\partial_k$ is $\Gamma$--equivariant with respect to this action. The complex $C_*(S)$ admits an exact augmentation given by
$$ C_0(S) \to \R \qquad \sum_i \lambda_i s_i \mapsto \sum_i \lambda_i. $$ 

The following definition of relative bounded cohomology is taken from \cite{MY} and is modelled on the analogous one for the non-bounded version in \cite{BE}. Our notation is slightly different from that of \cite{MY}.

\begin{dfn} \label{ } A $\Gamma$--module is a real vector space equipped with a linear $\Gamma$--action. A $\Gamma$--module $P$ is \textbf{projective} if, given $\Gamma$--equivariant maps $\varphi \: V \to W$ and $f \: P \to W$, with $\varphi$ surjective, there exists a $\Gamma$--equivariant map $\tilde f \: P \to V$ making the following diagram commute
\begin{equation} \label{fehiruj}
\begin{diagram} 
&& V\\
& \ruTo^{\tilde f} & \phantom{\,\, \varphi} \dTo \,\, \scriptstyle{\varphi}\\
P & \overset{f}{ \longrightarrow} & W. \\
\end{diagram}
\end{equation}

Given a module $M$, a \textbf{$\Gamma$--resolution} for $M$ is an exact $\Gamma$--complex
$$ \cdots E_k \to \cdots \to E_0 \to M \to 0. $$
A \textbf{$\Gamma$--projective resolution of $M$} is a $\Gamma$--resolution where all the $E_i$ are $\Gamma$--projective.
\end{dfn}

The following lemma, (similar to \cite[Lemma 52]{MY}) will be useful.

{\begin{lem} \label{Lemma 52 MY} Let $P$ be a $\Gamma$--module generated as a vector space by a basis $S$. Suppose that the action of $\Gamma$ on $P$ is such that, for every $s \in S$ and $\gamma \in \Gamma$, there is $t \in S$ such that $\gamma s = \pm t$. Moreover, suppose that $|\stab_\Gamma (s)| < \infty$ for every $s \in S$. Then $P$ is a $\Gamma$--projective module.
\end{lem}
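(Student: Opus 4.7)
The plan is to prove projectivity by defining $\tilde f$ on basis elements via an averaging procedure that uses the finiteness of the stabilizers. Given $\varphi: V \to W$ surjective and $f: P \to W$, the idea is to choose an orbit representative in each $\Gamma$-orbit of $S$ (where we consider the orbits under the action up to sign), lift it by choosing an arbitrary preimage under $\varphi$, and symmetrize it so that the output is compatible with the stabilizer action.

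More concretely, first I would introduce the equivalence relation on $S$ defined by $s \sim t$ iff $\gamma s = \pm t$ for some $\gamma \in \Gamma$, and pick a set $\mathcal{O} \subset S$ of representatives. For each $s_0 \in \mathcal{O}$, let
$$ H(s_0) := \{ \gamma \in \Gamma : \gamma s_0 = \pm s_0\}, $$
which contains $\stab_\Gamma(s_0)$ with index at most $2$ and is therefore finite by hypothesis. For $\gamma \in H(s_0)$ write $\gamma s_0 = \varepsilon_\gamma s_0$ with $\varepsilon_\gamma \in \{\pm 1\}$; a direct check shows $\varepsilon_{\gamma_1 \gamma_2} = \varepsilon_{\gamma_1} \varepsilon_{\gamma_2}$, so the formula $\gamma \cdot v := \varepsilon_\gamma \gamma v$ defines an honest action of $H(s_0)$ on $V$ and $W$, with respect to which $\varphi$ remains equivariant.

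Next I would pick any preimage $v_0 \in \varphi^{-1}(f(s_0))$ and set
$$ \tilde f(s_0) := \frac{1}{|H(s_0)|} \sum_{\gamma \in H(s_0)} \varepsilon_\gamma \gamma v_0. $$
By construction $\tilde f(s_0)$ is fixed by this modified $H(s_0)$-action, i.e. $\gamma \tilde f(s_0) = \varepsilon_\gamma \tilde f(s_0)$ for all $\gamma \in H(s_0)$; and since $\varepsilon_\gamma \gamma f(s_0) = f(\varepsilon_\gamma \gamma s_0) = f(s_0)$, applying $\varphi$ recovers $f(s_0)$. For a general $t \in S$ with $t \sim s_0$, I would extend by choosing any $\delta \in \Gamma$ with $\delta s_0 = \eta t$ (where $\eta \in \{\pm 1\}$) and setting $\tilde f(t) := \eta \delta \tilde f(s_0)$.

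The only real work is to check that this is well-defined: if $\delta_1, \delta_2$ both carry $s_0$ to $\pm t$, then $\delta_2^{-1}\delta_1 \in H(s_0)$, and the invariance of $\tilde f(s_0)$ under the modified action forces the two candidate definitions to agree. Once this is verified, equivariance $\tilde f(\gamma s) = \gamma \tilde f(s)$ follows immediately from the definition, and $\varphi \circ \tilde f = f$ on all of $S$ (hence on $P$) by $\Gamma$-equivariance of $\varphi$ and the fact that $f$ is $\Gamma$-equivariant. The main technical obstacle is bookkeeping the signs $\varepsilon_\gamma$ correctly throughout; the genuinely finite stabilizer hypothesis is essential in order for the normalization $1/|H(s_0)|$ to make sense in a real vector space.
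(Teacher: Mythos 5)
Your proof is correct and follows essentially the same route as the paper's: both lift $f(s_0)$ to an arbitrary preimage and then average with signs over the finite subgroup $\{\gamma : \gamma s_0 = \pm s_0\}$ (the paper's $\stab(s_0)\cup\stab^-(s_0)$, your $H(s_0)$), extending equivariantly over each orbit. Your write-up is in fact slightly more careful, since it makes the sign character $\varepsilon_\gamma$ and the well-definedness check explicit, whereas the paper compresses these into a single formula.
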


\begin{proof} Let $\varphi \: V \to W$ and $f \: P \to W$ be $\Gamma$--equivariant maps, and suppose that $\varphi$ is surjective. If $a \in P$, let $\stab^- (a) := \{\gamma \in \Gamma \: \gamma a = - a\}$. Notice that $|\stab^- (s)|$ is null or equals $|\stab (s)|$, hence in particular it is finite, if $s \in S$. Fix $s \in S$ and $b \in V$ such that $f(s) = \varphi(b)$. Put
$$ \tilde f(\pm \alpha s) := \pm \frac{\sum_{\gamma \in \stab (s)} \gamma \alpha b - \sum_{\gamma \in \stab^- (s)} \gamma \alpha b}{\left| \stab(s) \cup \stab^-(s) \right|} \qquad \forall \alpha \in \Gamma $$
The definition above gives rise to a well defined $\R$--linear and $\Gamma$--equivariant map $\R\Gamma s \to V$. Since $P$ is a direct sum of spaces of type $\R \Gamma$, $s \in S$, we obtain a $\Gamma$--equivariant map $ \tilde f \: P \to V$. Finally, it is easy to see that $\varphi \circ \tilde f = f$.
\end{proof}
In particular, if $\Gamma$ acts freely on $S$, then $C_*(S) \to \R \to 0$ is a $\Gamma$--projective resolution of the trivial $\Gamma$--module $\R$. 

We also have a normed version of projectivity.
\begin{dfn} \label{ } Let $\Gamma$ be a discrete group. A \textbf{bounded $\Gamma$-module} $V$ is an $\R$--normed space equipped with a (left) $\Gamma$-action of equibounded automorphisms, i.e. there exists $ L > 0$ such that
$$ \|\gamma\cdot v\| \le L \|v\| \qquad \forall v \in V,\, \gamma \in \Gamma. $$
A \textbf{bounded $\Gamma$--complex} is a complex of bounded $\Gamma$--modules with $\Gamma$--equivariant bounded boundary operators.
\end{dfn}

{
\begin{dfn} \label{348bfy7oewdilnu} A map $\varphi \: V \to W$ between normed spaces is \textbf{undistorted} if there exists $K > 0$ such that, for every $w \in W$ in the image of $\varphi$, there exists $v \in V$ such that
$$ \varphi(v) = w, \qquad \|v\| \le K \|w\|. $$ 
\end{dfn}

\begin{dfn} \label{ } A $\Gamma$--module $P$ is \textbf{$b$--projective} if, given any surjective undistorted bounded $\Gamma$--map $\varphi \: V \to W$ and any bounded $\Gamma$--map $f \: P \to W$, there exists a bounded $\Gamma$--map $\tilde f \: P \to V$ making the following diagram commute
\begin{equation} \label{fehiruj}
\begin{diagram}
&& V\\
& \ruTo^{\tilde f} & \phantom{\,\, \varphi} \dTo \,\, \scriptstyle{\varphi}\\
P & \overset{f}{ \longrightarrow} & W. \\
\end{diagram}
\end{equation}

Given a module $M$, a \textbf{bounded $\Gamma$--resolution} for $M$ is an exact bounded $\Gamma$--complex
$$ \cdots E_k \to \cdots \to E_0 \to M \to 0. $$
A \textbf{$b$--projective resolution of $M$} is a bounded $\Gamma$--resolution of $M$ where all the $E_i$ are $b$--projective and all maps are undistorted.
\end{dfn}

}

Given (bounded) $\Gamma$--modules $V$ and $W$, we denote by $\hom_{(b)}(V, W)$ the space of all (bounded) $\R$--linear homomorphisms from $V$ to $W$, and we denote by $\hom_{(b)}^\Gamma(V, W)$ the subspace of $\hom_{(b)}(V, W)$ whose elements are $\Gamma$--equivariant.

The following lemma is a simple exercise in homological algebra: 
\begin{lem} \label{q3onvp31t4y2} Given two ($b$--)projective $\Gamma$--resolutions $E_M$ and $E_M'$ of the same module $M$, there exists a (bounded) chain $\Gamma$--map $\varphi_* \: E_M \to E_M'$ which extends the identity on $M$. This map is unique up to (bounded) $\Gamma$--homotopy. 

Dually, if $V$ is any (bounded) $\Gamma$--module and $\varphi_1$, $\varphi_2 \: E_M \to E_M'$ are as above, there is a (bounded) $\Gamma$--homotopy between $\varphi_1^*$ and $\varphi_2^* \: \hom_{(b)}^*(E_M', V) \to \hom_{(b)}^*(E_M, V)$.
\end{lem}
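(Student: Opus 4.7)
My plan is to follow the classical fundamental lemma of homological algebra, handling the algebraic and the bounded/normed cases in parallel, since the only real difference is to check that the liftings provided by projectivity preserve boundedness. The two hypotheses in the bounded case that make this possible are already baked in: ($b$-)projective means we can lift across \emph{undistorted} surjective $\Gamma$-maps, and in a $b$-projective resolution ``all maps are undistorted.'' So at every inductive step the surjection we are lifting against will be an undistorted restriction of a boundary operator $\partial'_k \: E'_k \to \im \partial'_k$.

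For existence, I would construct $\varphi_k \: E_k \to E'_k$ by induction on $k \geq 0$. For the base case, the augmentation $\epsilon' \: E'_0 \to M$ is a surjective $\Gamma$-map (undistorted in the bounded setting), so ($b$-)projectivity of $E_0$ applied to $f = \epsilon \: E_0 \to M$ yields $\varphi_0$ with $\epsilon' \circ \varphi_0 = \epsilon$. Inductively, assume $\varphi_0, \ldots, \varphi_{k-1}$ are constructed and compatible with the boundaries. Then
$$ \partial'_{k-1} \circ (\varphi_{k-1} \circ \partial_k) = \varphi_{k-2} \circ \partial_{k-1} \circ \partial_k = 0, $$
so $\varphi_{k-1} \circ \partial_k$ takes values in $\ker \partial'_{k-1} = \im \partial'_k$ by exactness of $E'_M$. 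Since $\partial'_k \: E'_k \to \im \partial'_k$ is a surjective, bounded, undistorted $\Gamma$-map, ($b$-)projectivity of $E_k$ produces the desired lift $\varphi_k$.

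For uniqueness, suppose $\varphi_*, \psi_* \: E_M \to E'_M$ both extend $\id_M$. I would build a ($b$-)chain $\Gamma$-homotopy $h_k \: E_k \to E'_{k+1}$, with $h_{-1} = 0$, satisfying
$$ \varphi_k - \psi_k = \partial'_{k+1} \circ h_k + h_{k-1} \circ \partial_k, $$
again by induction. For $k = 0$, the difference $\varphi_0 - \psi_0$ is killed by $\epsilon'$, hence lands in $\im \partial'_1$, and we lift it through $\partial'_1$ via ($b$-)projectivity of $E_0$. For the step, let $\alpha_k := \varphi_k - \psi_k - h_{k-1} \circ \partial_k$; a direct computation using the inductive hypothesis and the chain map conditions gives $\partial'_k \circ \alpha_k = 0$, so $\alpha_k$ factors through $\im \partial'_{k+1}$ and we lift as before. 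The dual statement is then immediate: if $h_*$ is the ($b$-)chain $\Gamma$-homotopy between $\varphi_1$ and $\varphi_2$, then $h^*_* := \hom_{(b)}(h_*, \id_V)$ is a ($b$-)cochain $\Gamma$-homotopy between $\varphi_1^*$ and $\varphi_2^*$, and boundedness is preserved because each $h_k$ is bounded and post-composition with a bounded linear map is bounded.

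The only delicate point, and the one that would require care in the write-up, is verifying at each lifting step that the relevant corestricted boundary $\partial'_k \: E'_k \to \im \partial'_k$ is an undistorted surjection in the bounded case: this is precisely what the ``all maps are undistorted'' clause in the definition of a $b$-projective resolution guarantees, since an undistorted bounded map onto its image is in particular an undistorted surjection onto that image. Once this is observed, every induction step in both the existence and uniqueness arguments goes through verbatim in the bounded setting, and the norms of the $\varphi_k$'s and $h_k$'s, though not uniform, are finite at each degree, which is all that is required.
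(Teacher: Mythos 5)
Your proof is correct and is exactly the standard fundamental lemma of homological algebra that the paper invokes without proof (it calls the statement ``a simple exercise in homological algebra''), with the right observation that in the bounded case each lifting step goes through because the corestricted boundaries $\partial'_k \: E'_k \to \im \partial'_k$ are undistorted surjections, as guaranteed by the definition of a $b$--projective resolution. Nothing is missing.
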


Notice that, for every $\Gamma$--set $S$, the space $\R S$ is a bounded $\Gamma$--module and $C_*(S)$ is a bounded $\Gamma$--complex, whose augmentation is a $\Gamma$--projective and $\Gamma$--$b$--projective resolution of $\R$, if $\Gamma$ acts on $S$ as in Lemma \ref{Lemma 52 MY}.

\begin{dfn} \label{ } Let $\Gamma$ be a group, and let $\Gamma' := \{\Gamma_i\}_{i \in I}$ be a finite non-empty parametrized family of subgroups (this means that we allow repetitions among the $\Gamma_i$). We call such $(\Gamma, \Gamma')$ a \textbf{group-pair}. 
\end{dfn}

\begin{dfn} \label{standard resolutions} Given a group-pair $(\Gamma, \Gamma')$, let $I\Gamma$ be the $\Gamma$--set $\bigsqcup_{i \in I} \Gamma \sim \Gamma \times I$ (where $\Gamma$ acts on $I\Gamma$ by left translation of each copy of $\Gamma$). We consider the complex
$$ \St = \St_*(I\Gamma) := C_*(\Gamma \times I). $$
Let $\St'$ be the $\Gamma$-subcomplex of $\St$ with basis given by the tuples $(x_0, \ldots, x_k) \in (\Gamma \times I)^{k+1}$ for which there exists $i \in I$ such that $x_j \in \Gamma \times \{i\}$ for all $0 \le j \le k$ and $x_j \in x_0\Gamma_i$ for every $1 \le j \le k$. Finally, let $\St_*^{\rel} := \St_*/\St_*'$ be the quotient $\Gamma$--complex. If $V$ is a (bounded) $\Gamma$--module, the \textbf{(bounded) cohomology of the group-pair $(\Gamma, \Gamma')$ with coefficients in $V$} is the cohomology of the cocomplex
$$ \St_{(b)}^{\rel \, *}(\Gamma, \Gamma'; V) := \hom_{(b)}^\Gamma(\St_*^{\rel}, V), $$
and it is denoted by $H_{(b)}^*(\Gamma, \Gamma'; V)$.
\end{dfn}

The complex $\St_*^{\rel}(\Gamma, \Gamma')$ is provided with a natural norm, hence we can equip $\St_{(b)}^{\rel \, *}(\Gamma, \Gamma'; V)$ with the corresponding $\ell^\infty$ norm, which descends to a semi-norm on $H_{(b)}^*(\Gamma, \Gamma'; V)$. 

By Lemma \ref{Lemma 52 MY} it is easily seen that $\St_*^{\rel}(\Gamma, \Gamma')$ induces a $\Gamma$--projective resolution of the $\Gamma$--module $\Delta := \ker \,(\R (\Gamma / \Gamma') \to \R)$. Moreover, Lemma \ref{Lemma 52 MY} could be easily adapted to the normed setting, proving that $\St_k^{\rel}(\Gamma, \Gamma')$ is $b$--projective for all $k \ge 2$. Mineyev and Yaman also proved that the boundaries of the complex $\St_*^{\rel} \to \Delta \to 0$ are undistorted, hence the resolution $St_*^\rel$ is $b$--projective (see \cite[Section 8.3]{MY}). It follows by Lemma \ref{q3onvp31t4y2} that the relative (bounded) cohomology of $(\Gamma, \Gamma')$ is computed by any $\Gamma$--equivariant ($b$-)projective resolution of $\Delta$ up to canonical (bilipschitz) isomorphism. Even if we don't actually use the fact that $\St_*^{\rel}$ provides a $b$--$\Gamma$--projective resolution of $\Delta$, we will use the following result (proven in \cite[Section 10]{MY}). For completeness we provide a proof of it in Addendum \ref{addendum i}.

\begin{prop} \cite[The relative cone]{MY} \label{The relative cone} Fix $y \in I\Gamma$. There is a (non-$\R$--linear) map: 
$$ [y, \,\cdot]_{\rel} \: \St_1^{\rel} \to \St_2^{\rel} $$
called the \emph{\textbf{relative cone}}, such that $ \|[y, b]_{\rel}\| \le 3 \|b\|$ for all $b \in \St_1^{\rel}$ and $ \partial [y, z] = z $ for any cycle $z \in \St_1^{\rel}$ with respect to the augmentation map: $\St_1^{\rel} \to \Delta$.
\end{prop}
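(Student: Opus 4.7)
The plan is to realise $[y, \cdot]_{\rel}$ as a corrected naive cone $(x_0, x_1) \mapsto (y, x_0, x_1)$, with two peripheral ``bridge'' 2-simplices added so that the resulting boundary in $\St_1^{\rel}$ depends on $x_0, x_1$ only through their peripheral cosets. The residual error will then disappear exactly when the input is a cycle for the augmentation to $\Delta$.

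First I fix a set-theoretic section $c \colon \Gamma/\Gamma' \to I\Gamma$ of the quotient $\pi \colon I\Gamma \to \Gamma/\Gamma'$, and write $\bar{x} := c(\pi(x))$, so that $\bar{x}$ is a chosen representative in the peripheral coset of $x$. Since $\St_1^{\rel}$ has a natural basis consisting of pairs $(x_0, x_1) \in (I\Gamma)^2$ not lying in a common coset, I define on each such basis element
$$ [y, (x_0, x_1)]_{\rel} := (y, x_0, x_1) + (y, \bar{x}_0, x_0) - (y, \bar{x}_1, x_1) \pmod{\St_2'}, $$
and extend $\R$-linearly; this yields more than the statement asks (it only demands a set-map). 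Each of the three summands is either zero in $\St_2^{\rel}$ or one of its basis elements, so $\|[y, (x_0, x_1)]_{\rel}\| \le 3$, and the estimate $\|[y, b]_{\rel}\| \le 3\|b\|$ follows by linear extension.

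The core of the argument is a direct boundary computation. Applying $\partial(a_0, a_1, a_2) = (a_1, a_2) - (a_0, a_2) + (a_0, a_1)$ to each of the three summands, the four occurrences of $(y, x_0)$ and $(y, x_1)$ cancel in pairs, while the two peripheral pairs $(\bar{x}_0, x_0)$ and $(\bar{x}_1, x_1)$ lie in $\St_1'$ and hence die in the quotient. What remains is
$$ \partial [y, (x_0, x_1)]_{\rel} = (x_0, x_1) + (y, \bar{x}_0) - (y, \bar{x}_1) \quad \text{in } \St_1^{\rel}. $$
For a general chain $z = \sum_j \lambda_j (x_0^j, x_1^j) \in \St_1^{\rel}$, extending linearly and grouping the correction terms by peripheral coset turns the error into $\sum_{C \in \Gamma/\Gamma'} (\mu_C^{(0)} - \mu_C^{(1)})\,(y, c(C))$, where $\mu_C^{(a)} := \sum_{j \colon \pi(x_a^j) = C} \lambda_j$. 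The cycle condition reads $\sum_C (\mu_C^{(1)} - \mu_C^{(0)})[C] = 0$ in $\Delta \subset \R(\Gamma/\Gamma')$; linear independence of the cosets then forces $\mu_C^{(0)} = \mu_C^{(1)}$ for every $C$, so the error vanishes and $\partial [y, z]_{\rel} = z$.

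The only real choice in the construction is the correction $+ (y, \bar{x}_0, x_0) - (y, \bar{x}_1, x_1)$, whose role is dictated by the identity $\partial(y, \bar{x}, x) \equiv (y, \bar{x}) - (y, x) \pmod{\St_1'}$ coming from $(\bar{x}, x) \in \St_1'$. The main conceptual obstacle (which I expect the formula above to dispose of) is that the naive cone $(y, x_0, x_1)$ neither descends properly to $\St_2^{\rel}$ nor has a boundary equal to the input on cycles; the two peripheral bridges are precisely what is needed to convert the $\partial$ into a function of the peripheral cosets of $x_0$ and $x_1$, at which point the cycle condition closes the argument.
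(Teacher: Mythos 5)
Your construction is correct, and it verifies both required properties (the norm bound with constant $3$ and $\partial[y,z]_{\rel}=z$ on augmentation cycles); defining the map on the basis of $\St_1^{\rel}$ given by pairs not lying in a common peripheral coset makes well-definedness automatic, and the boundary computation and the coset-by-coset cancellation via the cycle condition in $\Delta$ are sound. However, your route is genuinely different from the paper's. The paper (following Mineyev--Yaman) first builds a non-linear ``filling'' map $\Phi\:\St_0\to\St_1$, which fills an augmentation $0$--chain $c=\sum\alpha_x^+x-\sum\alpha_x^-x$ by the weighted sum $\frac{1}{\sum\alpha_x^+}\sum\alpha_x^-\alpha_y^+[x,y]$ with $\|\Phi(c)\|\le\|c\|$; it then corrects the lift $j(b)$ at the level of $\St_1$, subtracting $\sum_s\Phi[\partial^s(j(b))]$ (each term supported in a single coset, hence in $\St_1'$) so as to obtain an honest $1$--cycle in $\St_1$, and finally applies the absolute cone $[y,\cdot]$ and projects. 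The constant $3$ there arises as $\|j(b)\|+\|\partial j(b)\|\le 3\|b\|$. You instead correct the cone itself, simplex by simplex, with the two bridge triangles $(y,\overline{x}_0,x_0)$ and $(y,\overline{x}_1,x_1)$ toward fixed coset representatives; the correction lives in $\St_2$ rather than $\St_1$, the constant $3$ appears as a per-simplex count, and — unlike the paper's map, which is non-linear because $\Phi$ depends on the positive/negative decomposition — your map is $\R$--linear, which is strictly more than the statement demands. The price is a choice of section $\Gamma/\Gamma'\to I\Gamma$ (so, like the paper's cone, your map is not $\Gamma$--equivariant, which is not required here); what you gain is a more elementary argument that bypasses $\Phi$ entirely.
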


It follows in particular that
\begin{cor} \cite[Equation (29), p. 38]{MY} \label{Equation (29), p. 38} Fix $y \in I\Gamma$. Let $\beta \in \St_2^\rel(\Gamma, \Gamma; V)$. Then: $\beta - [y, \partial \beta]_{\rel} \in \St_2^\rel(\Gamma, \Gamma; V)$ is a cycle, and therefore also a boundary by the exactness of $\St_*^{\rel}$. Hence, if $\alpha \in \St_b^2(\Gamma, \Gamma'; V)$ is a cocycle, we have
\begin{equation} \label{hlrueij3gkyu} \left\langle \alpha, \beta \right\rangle = \left\langle \alpha, [y, \partial \beta]_{\rel} \right\rangle \end{equation}
\end{cor}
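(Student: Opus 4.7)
The statement is essentially the verification of three elementary facts, chained together. My plan is to first observe that $\beta - [y,\partial\beta]_\rel$ is a $2$-cycle in $\St_*^\rel$, then invoke exactness of the resolution to realize it as a boundary, and finally pair with the cocycle $\alpha$.

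For the first step, I apply $\partial$ and use $\partial^2 = 0$. Since $\partial(\partial\beta) = 0$, the element $\partial\beta \in \St_1^\rel$ is a cycle (with respect to the augmentation $\St_*^\rel \to \Delta \to 0$). Proposition \ref{The relative cone} then yields $\partial[y,\partial\beta]_\rel = \partial\beta$, so
\[
\partial\bigl(\beta - [y,\partial\beta]_\rel\bigr) = \partial\beta - \partial\beta = 0.
\]
Hence the displayed element is a $2$-cycle.

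For the second step, recall (as noted just above Proposition \ref{The relative cone}) that $\St_*^\rel \to \Delta \to 0$ is an exact $\Gamma$-complex, so in particular the augmented complex is exact in positive degrees. Therefore there exists $\gamma \in \St_3^\rel$ with $\beta - [y,\partial\beta]_\rel = \partial\gamma$.

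For the third step, I simply evaluate $\alpha$ on both sides. Since $\alpha$ is a cocycle, $\delta\alpha = \alpha \circ \partial = 0$ on $\St_3^\rel$, so
\[
\langle\alpha,\beta\rangle - \langle\alpha,[y,\partial\beta]_\rel\rangle = \langle\alpha,\partial\gamma\rangle = \langle\delta\alpha,\gamma\rangle = 0,
\]
which is \eqref{hlrueij3gkyu}. There is no real obstacle here: the content is entirely supplied by Proposition \ref{The relative cone} and the exactness of the standard resolution. The only point requiring any care is to make sure the map $[y,\cdot]_\rel$ is only applied to elements that are genuinely cycles in the augmented complex, which is automatic for $\partial\beta$.
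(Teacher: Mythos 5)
Your proof is correct and follows exactly the route the paper intends (the corollary's statement essentially contains its own proof sketch): $\partial\beta$ is an augmentation cycle so Proposition \ref{The relative cone} gives $\partial[y,\partial\beta]_{\rel}=\partial\beta$, exactness of $\St_*^{\rel}\to\Delta\to 0$ makes the difference a boundary, and pairing with the cocycle $\alpha$ kills it. No gaps.
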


\begin{rem} \label{} A more general notion of relative bounded cohomology for pairs of \emph{groupoids} is developed in \cite{Blank}. By unravelling the definition of relative bounded cohomology given in \cite[Definition 3.5.1 and 3.5.12]{Blank}, it is possible to see that those definitions are isometrically isomorphic. We refer the reader to Proposition \ref{are naturally isometric} in Addendum \ref{addendum ii} for a proof of this fact. 
\end{rem}

\section{\label{sezione 3}Hyperbolic group-pairs and cusped-graph construction} 

Given a graph $G$, we denote by $d := d_G$ the \emph{graph-metric} on $G$. This is the path-metric on $G$ induced by giving length $1$ to every edge in $G$. Now, let $Y$ be a simplicial complex, with $1$--skeleton $Y^{(1)} = G$. Given a vertex $v_0 \in Y^{(0)}$ and a number $R \ge 0$, we define the ball $B_R(v_0)$ with radius $R$ centered in $v_0$ as the full subgraph of $Y$ whose vertex set is $\{v \in Y^{(0)} = G^{(0)} \: d_G(v, v_0) \le R\}$. Notice that this definition is slightly in contrast with the usual notion of balls in metric spaces, since we do not equip the whole $Y$ with a metric if $\dim Y \ge 2$ and, even if $Y = G$, there could be a point $p$ in the middle of an edge $e$ such that $p \in B_R(v_0)$, but $d_G(p, v_0) > R$. More generally, if $A \subseteq Y^{(0)}$ and $r \in \N$, we denote by $\mathscr N_r(A)$ the full subcomplex of $Y$ whose vertex set is $ \left\{v \in Y^{(0)}\: d_G(v, A) \le r\right\}$.

Let $S \not\ni 1$ be a symmetric finite generating set of a group $\Gamma$, and consider the associated \textbf{simplicial Cayley graph} $G^\simp(\Gamma, S)$. This is the simplicial graph (i.e. no double edges allowed) whose vertex set is $\Gamma$, and with a single edge connecting $\gamma_1$ with $\gamma_2$ in $\Gamma$ if and only if $\gamma_1 \gamma_2^{-1} \in S$. In Section \ref{sezione 8} we will consider a non-simplicial version of that graph. 

There are many equivalent definitions of relative hyperbolicity for a group-pair $(\Gamma, \Gamma')$. We choose the one introduced in \cite[p. 21, Definition 3.12; p. 25, Theorem 3.25(5)]{GM} which is based on the following \emph{cusped-graph} construction. In particular, we will restrict our attention to the case when $\Gamma$ is finitely generated and $\Gamma'$ is a finite family of finitely generated subgroups of $\Gamma$. A \textbf{(combinatorial) horoball} $ \mathscr H = \mathscr H(G)$ on a graph $G$ is the graph whose vertex set is parametrized by $G^{0} \times \N$, and with the following edges: \begin{itemize}
    \item the full subgraph of $ \mathscr H$ whose vertex set is $G^{(0)} \times \left\{0\right\}$ is a copy of $G$;
    \item there is a single edge between $(g, n)$ and $(g, n+1)$, for every $(g, n) \in G \times \N$;
    \item there is a single edge between $(g, n)$ and $(h, n)$ if and only $d_G(g, h) \le 2^n$.
\end{itemize}

\begin{dfn} [Cusped-graph] \label{definizione di cusped graf} Let $(\Gamma, \Gamma' = \{\Gamma_i\}_{i \in I})$ be a group-pair of finitely generated groups, and consider a symmetric finite generating set $S \not\ni 1$ of $\Gamma$ such that $S \cap \Gamma_i$ is a finite generating set of $\Gamma_i$ for every $i$ (i.e. $S$ is \textbf{compatible}). For every $i \in I$ and left coset $g \Gamma_i$ of $\Gamma_i$ in $\Gamma$ we consider the combinatorial horoball on the subgraph $g \, G^\simp(\Gamma_i, S \cap \Gamma_i)$ of $G^\simp(\Gamma, S)$. We glue those horoballs to $G^\simp(\Gamma, S)$ in the obvious way (see \cite[p. 18]{GM} for more details). We obtain in this way the \textbf{cusped-graph $X$}.
\end{dfn}

We denote by the triple $(g, i, n) \in \Gamma \times I \times \N$ a vertex of the cusped-graph. Notice that $(g, i, 0)$ and $(g, j, 0)$ denote the same vertex for all $i$, $j \in I$. We call the parameter $n$ in $(g, i, n)$ the \textbf{height} of the vertex $(g, i, n)$. Given a natural number $n$ and a horoball $ \mathscr H$, the $n$--horoball associated with $ \mathscr H$ is the full subgraph $ \mathscr H_n$ of $X$ whose vertices are the ones contained in $ \mathscr H$ with height at least $n$.

We will need the following result from \cite{GM}.

\begin{prop} \cite[Lemma 3.26]{GM} \label{Lemma 3.26 GM} If the cusped-graph $X$ constructed in Definition \ref{definizione di cusped graf} is $\delta$--hyperbolic and $C > \delta$, then the $C$--horoballs are convex in $X$.
\end{prop}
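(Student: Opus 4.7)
Plan: I would prove this by contradiction. Suppose there exist $x,y \in \mathscr{H}_C$ and a geodesic $\alpha = [x_0,x_1,\ldots,x_n]$ in $X$ from $x = x_0$ to $y = x_n$ containing some vertex $x_k$ of height strictly less than $C$. By truncating $\alpha$ at its last exit/first re-entry into $\mathscr{H}_C$, I may assume that the endpoints $x,y$ have height exactly $C$, that every internal vertex has height $< C$, and (since $C > \delta \geq 0$ forces $C \geq 1$, so $\mathscr{H}_C$ cannot straddle distinct horoballs through the Cayley graph) that $x$ and $y$ lie in the same combinatorial horoball $\mathscr{H}$.

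The first step is to exploit the preferred \textbf{vertical rays} in $\mathscr{H}$: from any vertex $v = (g,i,h)$ the infinite path $v, (g,i,h+1), (g,i,h+2),\ldots$ is a geodesic ray in $X$, because the height function $\text{ht}: X \to \mathbb{Z}_{\geq 0}$ is $1$-Lipschitz (each edge changes height by at most $1$). Let $r_x, r_y$ denote the vertical rays starting from $x$ and $y$; both converge to the same point $\xi \in \partial X$, the parabolic limit of $\mathscr{H}$. Applying $\delta$-thinness of the ideal geodesic triangle with vertices $x,y,\xi$ and sides $\alpha, r_x, r_y$, every vertex of $\alpha$ lies within graph-distance $\delta$ of some vertex of $r_x \cup r_y$. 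Since all vertices of these rays have height $\geq C$ and the height function is $1$-Lipschitz, I obtain the preliminary bound $\text{ht}(x_k) \geq C - \delta$ for every $x_k \in \alpha$. In particular $\alpha$ is contained in $\mathscr{H}$ itself, so the problem reduces to a calculation inside a single combinatorial horoball.

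The main obstacle is upgrading the bound $\text{ht} \geq C - \delta$ to the sharp $\text{ht} \geq C$, and this is precisely where the hypothesis $C > \delta$ must be used. Here I would invoke the explicit distance formula for combinatorial horoballs: the distance between $(g,n)$ and $(g',n')$ is essentially $|n - n'| + 2\lceil \log_2 d_G(g,g')\rceil - (n+n')$ (suitably clipped), so moving horizontally at height $h$ costs roughly $d_G/2^h$ and hence becomes exponentially cheaper as $h$ grows. Comparing the length of $\alpha$ (which dips to some height $C - j$ with $1 \leq j \leq \delta$) with the canonical \textbf{vertical–horizontal–vertical} path $\beta$ that stays in $\mathscr{H}_C$ and travels through a level $C + h$ with $2^{C+h}$ just larger than $d_G(g,g')$, a direct length count shows $\ell(\beta) < \ell(\alpha)$ whenever $j \leq \delta < C$. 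This contradicts the geodesicity of $\alpha$ and proves $\mathscr{H}_C$ is convex.

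The technical heart of the argument is the last step: balancing the $2j$ vertical cost saved by descending against the extra horizontal edges needed at the lower level, and checking that the inequality $C > \delta$ is exactly the threshold that tips the balance. Once this quantitative comparison is nailed down, the contradiction is immediate and the proposition follows.
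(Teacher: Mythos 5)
Note first that the paper does not prove this proposition; it is quoted verbatim from \cite[Lemma 3.26]{GM}, so your proposal has to be measured against the Groves--Manning argument. The first half of your outline is essentially their proof and is sound: vertical rays are geodesics because the height function is $1$--Lipschitz, the ideal triangle with vertices $x$, $y$, $\xi$ forces every vertex of $\alpha$ to have height at least $C-\delta\ge 1$, and this keeps $\alpha$ inside the single horoball $\mathscr H$. (One caveat: thinness of the \emph{ideal} triangle with constant exactly $\delta$ requires the convention, used in \cite{GM}, that triangles with vertices at infinity are $\delta$--thin; with only finite-triangle thinness one loses an additive constant here, which matters because the threshold in the statement is exactly $C>\delta$.)

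The genuine gap is in your second half, the part you call the technical heart. You place the use of $C>\delta$ inside the length comparison in the horoball and describe ``balancing the $2j$ vertical cost saved by descending against the extra horizontal edges'' --- but there is no balance to tip. A horizontal edge at height $n$ spans at most $2^n$ in $G$, so horizontal travel only gets cheaper as one goes \emph{up}; descending by $j$ levels costs $2j$ extra vertical edges \emph{and} strictly more horizontal edges, so both effects have the same sign. That geodesics of a combinatorial horoball never dip below $\min(\height(x),\height(y))$ is a universal fact, independent of $C$ and $\delta$; the hypothesis $C>\delta$ is used once, in the first half, solely to guarantee $C-\delta\ge 1$ so that $\alpha$ cannot reach height $0$ and escape $\mathscr H$. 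Your claimed condition ``$\ell(\beta)<\ell(\alpha)$ whenever $j\le\delta<C$'' therefore signals a miscalibrated computation. Moreover, the comparison as you set it up cannot work: since $\alpha$ is a geodesic, $\ell(\alpha)\le\ell(\beta)$ for \emph{every} competitor $\beta$, so the distance formula gives you nothing; what is needed is a lower bound on the length of an arbitrary path that dips below level $C$. The clean fix is a push-up argument: if $m<C$ is the minimal height of $\alpha$, replace each vertex $(a,m)$ of $\alpha$ by $(a,m+1)$. Horizontal edges at level $m$ remain edges at level $m+1$ (as $2^m\le 2^{m+1}$), vertical edges between levels $m$ and $m+1$ collapse, and at least two of them must collapse because $\alpha$ descends to level $m$ and returns. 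The new path still joins $x$ to $y$ and is shorter by at least $2$, contradicting geodesicity. With this step substituted for your length count, your outline becomes a complete proof.
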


\begin{rem} \label{q0378yoehilrujwd} From now on we fix some constant $C > \delta$, $C \ge 1$.
\end{rem}

\begin{rem} \label{3iyewhilnj} Notice that, by our definition, a cusped-graph is necessarily simplicial. Groves and Manning explicitely allow multiple edges in their definition of cusped-graph. We avoid double edges because we want to consider a cusped-graph as contained in every \emph{Rips complex} over it (see the next section). By Remark \ref{i3qrhiwul}, we can apply all relevant results of \cite{GM} also in our setting.
$\hfill \blacksquare$
\end{rem}

\begin{dfn} $($\cite[Definition 3.12; Theorem 3.25(5)]{GM}$)$ \label{ } Let $(\Gamma, \Gamma')$ be a group-pair of finitely generated groups. The pair $(\Gamma, \Gamma')$ is \textbf{(relatively) hyperbolic} if the cusped-graph of $(\Gamma, \Gamma')$ is a Gromov hyperbolic metric space (with the graph metric).\end{dfn}

\section{\label{sezione 4}Rips complexes on cusped graphs}

\begin{dfn} \label{ } Given a graph $G$ and a parameter $1 \le \kappa \in \N$, the \textbf{Rips complex $ \mathscr R_\kappa(G)$ on $G$} is the simplicial complex with the same $0$--skeleton as $G$, and an $n$--dimensional simplex for every set of $n+1$ vertices whose diameter (with respect to the metric of $G$) is at most $\kappa$.
\end{dfn}

Notice that, since $k \ge 1$, $G$ is naturally a subcomplex of $ \mathscr R_\kappa(G)$. We need the following fundamental result about Rips complexes over Gromov hyperbolic graphs.

\begin{lem} \label{pag. 469 proposizione 3.23 Bridson} Let $G$ be a $\delta$--hyperbolic graph. Then $ \mathscr R_\kappa(G)$ is contractible for every $\kappa \ge 4 \delta + 6$.
\end{lem}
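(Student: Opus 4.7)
The approach I would take is the classical ``flow toward a basepoint'' argument of Rips, essentially the proof given by Bridson--Haefliger (Proposition III.H.3.23). Rather than directly exhibiting a contraction, I would show that $\pi_n(\mathscr{R}_\kappa(G))=0$ for every $n\ge 0$: by simplicial approximation and compactness, any continuous $f\colon S^n\to \mathscr{R}_\kappa(G)$ lands inside a finite subcomplex supported on a finite vertex set $V\subseteq V(G)$, so it suffices to prove that every such finite full subcomplex is contractible (indeed collapsible). Since $\mathscr{R}_\kappa(G)$ is a CW complex, Whitehead's theorem then yields contractibility.

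First I would fix a basepoint $v_{0}\in V(G)$ and, for every $v\in V(G)$, choose a combinatorial geodesic $\gamma_v$ in $G$ from $v_{0}$ to $v$. Given the finite set $V$, I would enumerate its elements $w_{0}=v_{0},w_{1},\ldots,w_{N}$ in non-decreasing order of $d_G(\cdot,v_{0})$ and proceed by downward induction, removing $w_{N}$ first. For the vertex $w_{N}$ of maximal height, I would define a ``descent'' vertex $\phi(w_{N})$ as the point of $\gamma_{w_{N}}$ at a suitable distance from $w_{N}$ (roughly $\lfloor\kappa/2\rfloor$), so that $\phi(w_{N})$ lies strictly lower than $w_{N}$.

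The heart of the proof is the following hyperbolicity estimate: for every $w\in V$ with $d_G(w,w_{N})\le\kappa$, one has
\[
d_G\bigl(\phi(w_{N}),w\bigr)\le \kappa .
\]
To prove it I would consider the geodesic triangle $v_{0}w_{N}w$ in $G$, whose $[w_{N},w]$--side has length at most $\kappa$ and which is $\delta$--thin. Since $d_G(v_{0},w)\le d_G(v_{0},w_{N})$, the point $\phi(w_{N})$ on $[v_{0},w_{N}]$ is $\delta$--close either to $[v_{0},w]$ or to $[w_{N},w]$; tracking the heights in both cases and combining with the triangle inequality yields a bound on $d_G(\phi(w_{N}),w)$ by $\tfrac{\kappa}{2}+2\delta+1$, which is at most $\kappa$ precisely because $\kappa\ge 4\delta+6$. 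This estimate means that the link of $w_{N}$ in the full subcomplex on $V\cup\{\phi(w_{N})\}$ is a simplicial cone with apex $\phi(w_{N})$, hence contractible.

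Consequently, the star of $w_{N}$ collapses onto this link, and the full subcomplex on $V$ (after possibly enlarging $V$ by the finitely many vertices $\phi(w_{i})$) simplicially collapses onto the full subcomplex on $V\setminus\{w_{N}\}$. Iterating brings us all the way down to the full subcomplex on $\{v_{0}\}$, which is a point. Thus the finite subcomplex containing $\mathrm{im}(f)$ is contractible, so $f$ extends continuously over $D^{n+1}$, proving that $\mathscr{R}_\kappa(G)$ has trivial homotopy groups and therefore is contractible.

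The main obstacle is the hyperbolicity estimate above: one has to choose the magnitude of the descent $d_G(w_{N},\phi(w_{N}))$ carefully so that the two contributions --- the $\delta$--thinness of the triangle and the integrality of the graph metric --- add up to at most $\kappa$. The constant $4\delta+6$ is exactly what is needed to absorb these two contributions simultaneously in a simplicial (as opposed to geodesic) graph, and this is where the hypothesis on $\kappa$ is really used.
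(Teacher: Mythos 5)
Your proposal is correct and is essentially the argument the paper relies on: the paper cites \cite[Proposition 3.23]{BriHae} for this lemma, and the ``push the farthest vertex toward a strictly closer vertex on a geodesic to the basepoint'' scheme you describe is exactly the content the paper extracts from that proof in Lemma \ref{ofehwuil} (reduce to finite subcomplexes, use $\delta$--thinness to show the descent vertex stays $\kappa$--close to all neighbours of the removed vertex, and conclude via Whitehead). The only cosmetic imprecision is that the pushing moves show the finite subcomplex is null-homotopic \emph{inside} $\mathscr R_\kappa(G)$ (the homotopies pass through larger vertex sets) rather than contractible in itself, but this is all that is needed to kill the homotopy groups.
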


By considering the proof of Lemma \ref{pag. 469 proposizione 3.23 Bridson} given in \cite[Proposition 3.23]{BriHae}, it is possible to derive a more precise version of this lemma (see Corollary \ref{ohq3iulrejk}).

\begin{ntz} \label{q3ohitufwq3liy4gt} Let $G$ be a graph, and let $ \mathscr R = \mathscr R_\kappa(G)$ be a Rips complex over $G$. Then $G$ and $ \mathscr R$ induce two metrics $d_G$ and $d_{ \mathscr R}$ on $G^{(0)} = {\mathscr R}^{(0)}$. For $R \ge 0$ and a vertex $v_0$, we denote the full subcomplex of $ \mathscr R$ whose vertex set is $\{x \in G^{(0)} \: d_G(x, x_0) \le R\}$ by $B_R^G(v_0)$, and refer to it as a $G$--ball.
\end{ntz}

Given a Rips complex $\mathscr R_\kappa(G)$ over $G$, we have, for every $l \in \N$ and every vertex $v$, the equality
\begin{equation} \label{hdijlwo5hgur} B_{l\kappa}^G(v) = B_l(v). \end{equation}

\begin{dfn} \label{ } Given a topological space $Z$ and two subspaces $W_1$ and $W_2$, we say that there is a homotopy from $W_1$ to $W_2$ if the inclusion $W_1 \hookrightarrow Z$ is homotopic to a map $f \: W_1 \to Z$ whose image is $W_2$.
\end{dfn}

A (geometric) simplex in a simplicial complex $Z$ is determined by the set of its vertices. If $x_0, \ldots, x_n$ are non-necessarily distinct vertices in $Z$, we denote by $[x_0, \ldots, x_n]$ the corresponding simplex (if there is one). Notice that the dimension of $[x_0, \ldots, x_n]$ could be less than $n$.

\begin{dfn} \label{45op7qhuefj} If $Z$ is a simplicial complex, $W_1$ and $W_2$ are subcomplexes of $Z$, and $w_1 \in W_1$ and $w_2 \in W_2$ are vertices, we say that $W_2$ is obtained from $W_1$ by \textbf{pushing} $w_1$ toward $w_2$ if the following conditions hold: \begin{enumerate}
    \item for every set of vertices $\{x_0, \ldots, x_n\} \in Z^{(0)} \setminus \left\{w_1\right\}$, $[x_0, \ldots, x_n, w_1]$ is a simplex in $W_1$ if and only if $[x_0, \ldots, x_n, w_2]$ is a simplex in $W_2$;
    \item in that case, $[x_0, \ldots, x_n, w_1, w_2]$ is a simplex in $Z$.
\end{enumerate}
\end{dfn}

Notice that it follows that $W_2^{(0)} = \left(W_1^{(0)} \setminus \left\{w_1\right\}\right) \cup \left\{w_2\right\}$.
Under the conditions of Definition \ref{45op7qhuefj}, there is an obvious simplicial homotopy from $W_1$ to $W_2$.

\begin{lem}\cite[Proposition 3.23]{BriHae} \label{ofehwuil} Let $G$ and $ \mathscr R_\kappa = \mathscr R_\kappa(G)$ be as in Lemma \ref{pag. 469 proposizione 3.23 Bridson}. Let $K$ be a compact subcomplex of $ \mathscr R_\kappa$, and let $v_0 \in \mathscr R_\kappa^{(0)}$ be a vertex. Then, it is possible to inductively homotope the complex $K$ into a sequence of subcomplexes $K_0 = K$, $K_1$, \ldots, $K_m = \{v_0\}$ in such a way that: \begin{enumerate}
    \item there is a sequence of vertices $x_i \in K_i^{(0)}$ such that 
    $$d_G(v_0, x_i) = \max\{d_G(v_0, y) \: y \in K_i^{(0)} \}, $$
    \item $K_{i+1}$ is obtained from $K_i$ by pushing $x_i$ toward some vertex $y_{i}$ such that $d_G(v_0, y_i) < d_G(v_0, x_i)$. 
\end{enumerate}

\end{lem}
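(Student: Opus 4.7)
The plan is to produce the sequence $(K_i)_{i \ge 0}$ greedily: at each stage pick any vertex $x_i$ of $K_i$ realizing the maximum of $d_G(v_0, \cdot)$ on $K_i^{(0)}$, pick a target vertex $y_i$ that is strictly closer to $v_0$ along a $G$-geodesic from $v_0$ to $x_i$, and let $K_{i+1}$ be the push of $K_i$ taking $x_i$ to $y_i$. Compactness of $K$ forces $K_0^{(0)}$ to be finite, and at each step the multiset of distances $\{d_G(v_0, y) : y \in K_i^{(0)}\}$ strictly decreases in lexicographic order (when sorted decreasingly), so only finitely many pushes can occur before we reach $K_m = \{v_0\}$.

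The core issue is the choice of $y_i$ and the verification of the two requirements of Definition \ref{45op7qhuefj}. Write $x := x_i$ and $r := d_G(v_0, x)$, and assume $r \ge 1$ (if $r = 0$ we already have $K_i = \{v_0\}$). If $r \le \kappa$, set $y_i := v_0$: the verification is immediate since every vertex $z$ of $K_i$ satisfies $d_G(v_0, z) \le r \le \kappa$, so both $[z_0, \ldots, z_n, v_0]$ and $[z_0, \ldots, z_n, x, v_0]$ automatically have diameter at most $\kappa$. If $r > \kappa$, fix a $G$-geodesic from $v_0$ to $x$ and let $y_i$ be the vertex on it at graph-distance $D := \lceil \kappa/2 \rceil$ from $x$. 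Then $d_G(v_0, y_i) = r - D < r$ and $d_G(x, y_i) = D \le \kappa$, which is what is needed in condition (2) of Definition \ref{45op7qhuefj} beyond the data inherited from $K_i$.

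The technical heart is condition (1): for every vertex $z \in K_i^{(0)} \setminus \{x\}$ sharing a simplex of $K_i$ with $x$ (so $d_G(x, z) \le \kappa$), one must show $d_G(y_i, z) \le \kappa$. Here I would apply $\delta$-slimness to the geodesic triangle with vertices $v_0, x, z$. Writing $s := d_G(v_0, z)$ and $t := d_G(x, z)$, maximality of $x$ gives $s \le r$, and the associated tripod has leg at $x$ of length $b := (r + t - s)/2$. A case split on whether $D \le b$ or $D > b$, together with the slim-triangle inequality applied to the corresponding point on $[x, z]$ or on $[v_0, z]$, shows that
\[ d_G(y_i, z) \le \delta + \max(t - D,\; D + s - r) \le \delta + \max(\kappa - D,\; D). \]
With $D = \lceil \kappa/2 \rceil$ the right-hand side is at most $\delta + \lceil \kappa/2 \rceil \le \kappa$, because the hypothesis $\kappa \ge 4\delta + 6$ provides ample slack.

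The main obstacle is precisely this hyperbolic case analysis: one has to book-keep which leg of the tripod $y_i$ projects onto, and verify in the $v_0$-leg case that the auxiliary point on $[v_0, z]$ actually lies between $v_0$ and $z$ (which reduces to the triangle inequality $s \ge r - t$, hence $b \ge r - s$, hence $D > b \ge r - s$). Once condition (1) is verified, Definition \ref{45op7qhuefj} directly yields a simplicial homotopy from $K_i$ to $K_{i+1}$ inside $\mathscr R_\kappa$, and concatenating these homotopies gives a homotopy of $K$ into $\{v_0\}$ realizing properties (1)--(2) of the lemma.
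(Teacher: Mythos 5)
Your argument is correct and is essentially the proof of \cite[Proposition 3.23]{BriHae} that the paper simply cites for this lemma: push the farthest vertex a distance of roughly $\kappa/2$ toward $v_0$ along a geodesic, and use hyperbolicity to check that the new vertex stays within distance $\kappa$ of every former neighbour of the old one. The only (harmless) imprecision is that $\delta$-slimness gives closeness to the \emph{union} of the other two sides rather than to the specific comparison point of the tripod, but the slack in $\kappa \ge 4\delta + 6$ absorbs the slightly worse constant this entails.
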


\begin{cor} \label{3qgrwe3hqru} Let $G$ be a $\delta$--hyperbolic locally compact graph and let $\kappa \ge 4 \delta + 6$. Then every $G$--ball $B_R^G(v_0) \subseteq \mathscr R_\kappa(G)$ is a contractible topological space.
\end{cor}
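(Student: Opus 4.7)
The plan is to apply Lemma \ref{ofehwuil} directly to $K := B_R^G(v_0)$ with basepoint $v_0$, and to observe that the resulting contraction of $K$ inside $\mathscr R_\kappa(G)$ actually takes place \emph{inside} $B_R^G(v_0)$. The contractibility will then follow immediately.

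First I would check that $B_R^G(v_0)$ is a compact subcomplex of $\mathscr R_\kappa(G)$. Since $G$ is locally compact, every vertex of $G$ has finite degree, so iteratively the set of vertices at $G$--distance at most $R$ from $v_0$ is finite; consequently $B_R^G(v_0)$, being a full subcomplex of $\mathscr R_\kappa(G)$ on a finite vertex set, is compact. This makes Lemma \ref{ofehwuil} applicable.

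Next, applying Lemma \ref{ofehwuil} gives a sequence of subcomplexes $K_0 = B_R^G(v_0),\, K_1, \ldots, K_m = \{v_0\}$ in $\mathscr R_\kappa(G)$, where $K_{i+1}$ is obtained from $K_i$ by pushing a vertex $x_i$ of maximal $d_G$--distance from $v_0$ toward a vertex $y_i$ with $d_G(v_0, y_i) < d_G(v_0, x_i)$. I would now argue by induction on $i$ that $K_i \subseteq B_R^G(v_0)$ and that the homotopy from $K_i$ to $K_{i+1}$ stays inside $B_R^G(v_0)$. Indeed, assuming $K_i^{(0)} \subseteq B_R^G(v_0)^{(0)}$, we have $d_G(v_0, y_i) < d_G(v_0, x_i) \le R$, so $K_{i+1}^{(0)} = (K_i^{(0)} \setminus \{x_i\}) \cup \{y_i\}$ is also contained in $B_R^G(v_0)^{(0)}$. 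Since $B_R^G(v_0)$ is a \emph{full} subcomplex of $\mathscr R_\kappa(G)$, every simplex of $\mathscr R_\kappa(G)$ with vertices in $B_R^G(v_0)^{(0)}$ belongs to $B_R^G(v_0)$; in particular both $K_{i+1}$ and the auxiliary simplices $[x_0, \ldots, x_n, x_i, y_i]$ used to realize the pushing homotopy lie in $B_R^G(v_0)$.

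Concatenating these homotopies yields a homotopy inside $B_R^G(v_0)$ between the identity of $B_R^G(v_0)$ and the constant map at $v_0$, proving contractibility. The only real point to be careful about is the fullness step: one must exploit that $B_R^G(v_0)$ is defined as a \emph{full} subcomplex so that the pushing simplex $[x_0,\dots,x_n,x_i,y_i]$, which a priori is just a simplex of the ambient $\mathscr R_\kappa(G)$, automatically sits in $B_R^G(v_0)$. I expect this verification, together with bookkeeping that the farthest-vertex choice in Lemma \ref{ofehwuil} never forces us to leave the ball, to be the only delicate part; the rest is immediate from Lemma \ref{ofehwuil}.
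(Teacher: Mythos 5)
Your proposal is correct and follows essentially the same route as the paper: apply Lemma \ref{ofehwuil} to the (compact, by local compactness) ball itself and observe that condition (2) forces every $K_i$, and hence (by fullness of the ball) every pushing simplex, to remain inside $B_R^G(v_0)$. The paper's proof is just a one-line version of this observation, so your extra care about compactness and fullness only makes explicit what the paper leaves implicit.
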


\begin{proof} In the notations of Lemma \ref{ofehwuil} simply note that, by point (2), the $K_i$ are contained in $B_R^G(v_0)$.
\end{proof}

Given a Rips complex $ \mathscr R_\kappa(X)$ over some cusped space $X$, an ($n$--)horoball of $\mathscr R_\kappa(X)$ is the full subcomplex of $\mathscr R_\kappa(X)$ having the same vertices of an ($n$--)horoball of $X$. Recall that we have fixed a constant $C > \delta$ (Remark \ref{q0378yoehilrujwd}).

\begin{cor} \label{ohq3iulrejk} Let $X$ be the cusped space of a relatively hyperbolic group-pair $(\Gamma, \Gamma')$ (with respect to some finite generating set $S$ as described above) and let $\delta$ be a hyperbolicity constant of $X$, which we can assume to be an integer. Then, for $\kappa \ge 4 \delta + 6$, the Rips complex $\mathscr R = \mathscr R_\kappa(X)$ is contractible, with contractible $C$--horoballs.  Moreover, the balls of $ \mathscr R_\kappa(X)$ are also contractible.
\end{cor}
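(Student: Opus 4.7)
The plan is to handle the three assertions separately, in increasing order of delicacy. The contractibility of $\mathscr R_\kappa(X)$ is immediate from Lemma \ref{pag. 469 proposizione 3.23 Bridson} applied to the graph $G = X$, since by hypothesis $X$ is $\delta$-hyperbolic and $\kappa \ge 4\delta + 6$. The contractibility of each $G$-ball $B_R^G(v_0)$ is then a direct application of Corollary \ref{3qgrwe3hqru}, provided I first observe that $X$ is locally finite: the simplicial Cayley graph $G^{\simp}(\Gamma, S)$ has finite valence (since $S$ is finite), and a vertex $(g,i,n)$ inside a horoball has only its two vertical neighbors $(g,i,n\pm 1)$ together with the finitely many same-height vertices $(h,i,n)$ with $0 < d_G(g,h) \le 2^n$, a set which is finite because balls in the locally finite Cayley graph are finite.

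The contractibility of the $C$-horoballs is the only point requiring some thought, and my plan is to reduce it to a second application of Lemma \ref{pag. 469 proposizione 3.23 Bridson}, this time applied to each $C$-horoball $\mathscr H_C$ viewed as a graph in its own right with its intrinsic graph metric $d_{\mathscr H_C}$. The key input is Proposition \ref{Lemma 3.26 GM}, which ensures that $\mathscr H_C$ is convex in $X$; that is, any two vertices of $\mathscr H_C$ are joined by a geodesic of $X$ that stays in $\mathscr H_C$. From this I would deduce that $d_{\mathscr H_C}$ equals the restriction of $d_X$ to $\mathscr H_C^{(0)}$. Two consequences follow: first, $(\mathscr H_C, d_{\mathscr H_C})$ is itself $\delta$-hyperbolic, as a geodesic subspace of $X$ whose metric is inherited from $X$; second, the diameter of any finite subset of $\mathscr H_C^{(0)}$ is the same whether computed in $X$ or in $\mathscr H_C$, so that the intrinsic Rips complex $\mathscr R_\kappa(\mathscr H_C)$ coincides as a simplicial complex with the full subcomplex of $\mathscr R_\kappa(X)$ on $\mathscr H_C^{(0)}$, namely with the $C$-horoball of $\mathscr R_\kappa(X)$ in the sense defined just above the statement. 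Applying Lemma \ref{pag. 469 proposizione 3.23 Bridson} to $\mathscr H_C$ with the same constant $\kappa$ then closes the argument.

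The step I expect to be the main obstacle is the articulation of the convexity hypothesis. Concretely, one has to pass from the statement \emph{``some geodesic of $X$ between two points of $\mathscr H_C$ lies in $\mathscr H_C$''} to the equality $d_{\mathscr H_C} = d_X|_{\mathscr H_C^{(0)}}$, and from there to the equality of the two a priori distinct Rips complex structures one could put on $\mathscr H_C^{(0)}$. Once this bookkeeping is carried out the contractibility of horoballs follows directly; the rest of the corollary is a mechanical assembly of results already in hand.
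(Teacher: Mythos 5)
Your argument is correct, and for the horoball assertion it takes a genuinely different route from the paper. You globalize: using the convexity of Proposition \ref{Lemma 3.26 GM} you identify the intrinsic path metric of the full subgraph $\mathscr H_C$ with the restriction of $d_X$, conclude that $\mathscr H_C$ is itself a $\delta$--hyperbolic graph whose intrinsic Rips complex coincides with the $C$--horoball of $\mathscr R_\kappa(X)$, and then apply Lemma \ref{pag. 469 proposizione 3.23 Bridson} a second time. The paper instead works locally: it takes an arbitrary compact subcomplex $K$ of the horoball, encloses it in an $X$--ball $B^X_r(v_0)$ centred at a vertex $v_0$ lying \emph{deeper} in the horoball than $K$ (at height $\minh(K)+D$ with $D$ the diameter of $K$), runs the pushing procedure of Lemma \ref{ofehwuil} toward $v_0$, checks by an induction on heights that none of the intermediate complexes ever drops to height $C-1$, and concludes via Whitehead's theorem. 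Your version is shorter and dispenses with both the height bookkeeping and Whitehead; on the other hand the paper's more laborious computation is precisely what produces the quantitative Proposition \ref{wupiEACJNL} (a compact $K \subseteq \mathscr H_C$ contracts inside a subcomplex of diameter linearly controlled by that of $K$), which is used later in the Local Lemma \ref{lemma locale}. Your route does recover that statement as well --- apply Corollary \ref{3qgrwe3hqru} to the graph $\mathscr H_C$, whose balls are exactly the sets $B^X_r(v_0) \cap \mathscr H_C$ under your metric identification --- but you would need to say so explicitly for the rest of Section \ref{sezione 5} to go through. Your treatment of the other two assertions matches the paper's, and your explicit verification that $X$ is locally finite (required by Corollary \ref{3qgrwe3hqru} as stated) makes precise a point the paper leaves implicit.
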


\begin{proof} The last assertion follows from Corollary \ref{q3ohitufwq3liy4gt} and Equation \eqref{hdijlwo5hgur}. Now, let $K$ be a compact subcomplex contained in some $C$--horoball  $ \mathscr H_C$ (recall that $ \mathscr H_C$ is convex). Let $v_L = (g, i, n)$ be the lowest vertex of $K$, and let $D := \max\{d_X(v_L, v) \: v \in K^{(0)}\}$. Then, it is easy to see that $K$ is contained in the $X$--ball $B_{D + 1}^X(g, i, n + D)$. Put $r := D + 1$ and $v_0 := (g, i, n + D)$. Then, the $X$--ball $B_{r}^X(v_0)$ contains $K$ and is contained in $ \mathscr H_{C-1}$.

With the notation as in Lemma \ref{ofehwuil}, consider the sequence of compact sets $K_1$, \ldots, $K_m$ which collapses to the point $v_0$. Those $K_i$  are contained in $B_{r}^X(v_0) \subseteq \mathscr H_{C-1}$. We now prove that the $K_i$ are actually contained in $\mathscr H_{C}$. Indeed, $K_1 \subseteq \mathscr H_{C}$ by hypothesis. Suppose by induction that $K_i$ contains no vertices of height $C-1$, and suppose that the vertex $w_{i+1} \in K_{i+1} \setminus K_i$ has height $C-1$. Let $w_i \in K_i$ be a vertex such that $d_X(w_{i+1}, v_0) < d_X(w_i, v_0)$. Then we get a contradiction, because $d_X(w_{i+1}, v_0) \ge \height(v_0) - (C-1) \ge r$, and $d_X(w_{i}, v_0) \le r$ because $K_{i} \subseteq B_{r}^X(v_0)$.

Hence $K$ is contractible in $ \mathscr H_C$. By the arbitrariness of the compact subcomplex $K$, it follows that all homotopy groups of $ \mathscr H_C$ are trivial and the conclusion follows by Whitehead's Theorem.

\end{proof}

Notice that, in order to prove that $C$--horoballs are contractible, we have actually proved the following more precise statement.

\begin{prop} \label{wupiEACJNL}  \label{} Every compact complex $K$ in some $C$--horoball $ \mathscr H_C$ is contained in a contractible space $B_r^X(v_0) \cap \mathscr H_C$, for some $r > 0$, whose diameter in $\mathscr R_\kappa$ is linearly bounded by the diameter of $K$.
\end{prop}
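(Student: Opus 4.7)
The plan is to read the conclusion directly off the construction already carried out in the proof of Corollary \ref{ohq3iulrejk}, making explicit both the choice of $B_r^X(v_0)\cap \mathscr H_C$ and the linear dependence of its diameter on that of $K$. First I would reuse the same data: let $v_L=(g,i,n)$ be the lowest vertex of $K$, set $D:=\max\{d_X(v_L,v)\: v\in K^{(0)}\}$, and put $v_0:=(g,i,n+D)$ together with $r:=D+1$. The triangle inequality immediately gives $K\subseteq B_r^X(v_0)$, while $n\ge C$ follows from $K\subseteq \mathscr H_C$.

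Next I would establish contractibility of $B_r^X(v_0)\cap \mathscr H_C$ via Whitehead's theorem: it suffices to show that every compact subcomplex $K'\subseteq B_r^X(v_0)\cap \mathscr H_C$ can be contracted to $v_0$ without leaving the intersection. Applying Lemma \ref{ofehwuil} with basepoint $v_0$ to $K'$ produces a sequence $K'_0=K',K'_1,\ldots,K'_m=\{v_0\}$, and the strict decrease of $d_X(v_0,\cdot)$ along each push keeps the sequence inside $B_r^X(v_0)$. The very same induction as in Corollary \ref{ohq3iulrejk} then prevents any intermediate vertex from dropping to height $\le C-1$: such a vertex $w$ would satisfy $d_X(w,v_0)\ge \height(v_0)-(C-1)=n+D-C+1\ge r$, contradicting the strict decrease. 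Hence the whole homotopy remains in $B_r^X(v_0)\cap \mathscr H_C$.

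Finally I would carry out the diameter estimate, which is essentially bookkeeping. The $X$-diameter of $B_r^X(v_0)$ is at most $2r=2D+2\le 2\,\diam_X K+2$. Since any two vertices of $X$ at graph distance at most $\kappa$ span an edge of $\mathscr R_\kappa$, one has $d_{\mathscr R_\kappa}\le \lceil d_X/\kappa\rceil$ and, conversely, $d_X\le \kappa\, d_{\mathscr R_\kappa}$. Composing these gives a bound of the form
\[
  \diam_{\mathscr R_\kappa}\!\bigl(B_r^X(v_0)\cap \mathscr H_C\bigr)\ \le\ 2\,\diam_{\mathscr R_\kappa} K+O(1),
\]
which is the required linear bound.

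The only mildly subtle step is verifying that the collapsing homotopy of a generic compact subcomplex of $B_r^X(v_0)\cap \mathscr H_C$ never escapes upward into $\mathscr H_{C-1}$; but this is precisely the height-induction already performed in Corollary \ref{ohq3iulrejk} and requires no new idea. The proposition is thus a repackaging of that proof together with an elementary diameter estimate.
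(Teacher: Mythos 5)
Your proposal follows exactly the route the paper takes: the paper offers no separate proof of Proposition \ref{wupiEACJNL} at all, but simply observes that the argument given for Corollary \ref{ohq3iulrejk} already establishes it, and your write-up is a faithful (and somewhat more careful) unpacking of that argument, including the Whitehead step for arbitrary compact subcomplexes of $B_r^X(v_0)\cap\mathscr H_C$ and the diameter bookkeeping.

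One justification in your first step is wrong, however, and it is worth flagging because the precise constant matters. You claim that ``the triangle inequality immediately gives $K\subseteq B_r^X(v_0)$'' with $r=D+1$. The triangle inequality only gives $d_X(v,v_0)\le d_X(v,v_L)+d_X(v_L,v_0)\le D+D=2D$. The containment with radius $D+1$ is a genuine (if elementary) fact about combinatorial horoballs: a path of length at most $D$ starting at $v_L=(g,i,n)$ can displace the Cayley-graph coordinate by at most $\sum 2^{l_j}\le 2^{n+D}$, so for any vertex $v=(h,i,m)$ of $K$ the vertex $(h,i,n+D)$ is adjacent to (or equal to) $v_0=(g,i,n+D)$, and climbing from $v$ to $(h,i,n+D)$ costs $n+D-m\le D$ edges; hence $d_X(v,v_0)\le D+1$. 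This is not pedantry: your height induction requires $r\le \height(v_0)-(C-1)=n+D-C+1$, which with $n\ge C$ is exactly $r\le D+1$. Had you actually taken $r=2D$ as the triangle inequality would force, the induction preventing vertices from dropping to height $C-1$ would fail for $D\ge 2$. The paper glosses over this point with ``it is easy to see'', so the gap is inherited rather than introduced, but your stated reason for it is incorrect and should be replaced by the horoball estimate above. Everything else --- the Whitehead argument and the linear diameter bound --- is correct.
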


\section{\label{sezione 5}Filling inequalities on $ \mathscr R_\kappa(X)$}

If $Y$ is a $CW$-complex, by $C_*(Y)$ we mean the real cellular chain complex of $Y$, i.e. the complex $H_*(Y^{(*)}, Y^{(*-1)})$ with real coefficients. We denote by $Z_k(Y)$ the subspace of cycles of $C_k(Y)$. There will be no confusion
with the notation of Section \ref{sezione 2}. Notice that, if $Y$ is a simplicial complex, the cellular chain complex $\cdots C_2(Y) \to C_1(Y) \to C_0(Y) \to \R \to 0$ is identifiable with the simplicial chain complex of oriented simplices. This is the chain complex whose $k$--th module is the real vector space generated by tuples $(y_0, \ldots, y_n)$ up to the identification
$$ (y_0, \ldots, y_i, \ldots , y_j, \ldots , y_n) = - (y_0, \ldots ,y_j, \ldots , y_i, \ldots , y_n) $$
(see \cite[Chapter 1, paragraph 5]{MUNKRES} for more details). 

We see a simplicial chain $c \in C_k(Y)$ as a finitely supported map from the set of $n$--dimensional oriented simplices of $Y$ to $\R$, and we define the support $\supp (c)$ of $c$ as the set of unoriented $n$--dimensional simplices $\Delta$ of $Y$ such that $c(\sigma) \ne 0$, where $\sigma$ is one of the two oriented simplices over $\Delta$. By $\maxh \, c$ ($\minh \, c$) we mean the height of the highest (lowest) vertex of simplices in $\supp (c)$. We denote by $\supp^{(0)}(c)$ the set of vertices that belong to some simplex in $\supp (c)$. If $A$ is a subset of $Y$ and $c = \sum_i \lambda_i \sigma_i$ is a simplicial $k$--chain, we define the restriction of $c$ to $A$ as the chain
$$ c {}_{\big| A} := \sum_{i \: \sigma_i^{(0)} \subseteq A} \lambda_i \sigma_i. $$

\subsection{A local lemma}

From now on we assume that $ \mathscr R_\kappa = \mathscr R_\kappa(X)$ satisfies the hypotheses of Corollary \ref{ohq3iulrejk}. Recall that $C$ is a fixed constant greater than $\delta$.

\begin{lem}[Local lemma] \label{lemma locale} For every $i \ge 0$, there are non--decreasing functions: 
$$ R \: \N \to \N \qquad M_\loc \: \N \times \N \to \R_{\ge 0} $$
such that, for every $D \in \N_{\ge 1}$, $v_0 \in \mathscr R_\kappa^{(0)}$ and $z \in Z_i( \mathscr R_\kappa)$ such that $\supp z \subseteq B_{D}(v_0)$, there is $a \in C_{i+1}( \mathscr R_\kappa)$ such that: 
\begin{enumerate}
    \item $\partial a = z$;
    \item $\supp a \subseteq B_{R(D)}(v_0)$; 
    \item $\|a\| \le M_\loc(D, \maxh(z)) \|z\|$;
    \item if $z$ is contained in some $C$--horoball, then $a$ is contained in the same $C$--horoball ($C$ as in Remark \ref{q0378yoehilrujwd}).
\end{enumerate}
\end{lem}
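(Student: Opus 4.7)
The plan is to fill $z$ inside a well-chosen finite contractible subcomplex $K_z \subseteq \mathscr R_\kappa$ containing $\supp z$, using contractibility to obtain a filling and finite-dimensionality plus $\Gamma$-equivariance to bound its norm.

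I would split into two cases. If $\supp z \subseteq \mathscr H_C$ for some $C$--horoball $\mathscr H_C$, Proposition \ref{wupiEACJNL} produces $v_0' \in \mathscr H_C$ and $r \ge 0$ so that $K_z := B_r^X(v_0') \cap \mathscr H_C$ is contractible, contains $\supp z$, and has $\mathscr R_\kappa$--diameter at most some $L(D)$ bounded linearly in $\mathrm{diam}_{\mathscr R_\kappa}(\supp z) \le 2D$. Otherwise set $K_z := B_D(v_0)$, which is contractible by Corollary \ref{ohq3iulrejk} together with \eqref{hdijlwo5hgur}. Since the cusped graph is locally finite, $K_z$ is a finite simplicial complex in either case, so contractibility forces $H_i(K_z)=0$, and $z$ bounds inside $K_z$: pick any $a \in C_{i+1}(K_z)$ with $\partial a = z$, giving (1). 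Condition (4) is automatic in the horoball case and vacuous otherwise. Since each vertex of $K_z$ lies within $\mathscr R_\kappa$--distance $L(D)$ (setting $L(D):=0$ in the non-horoball case) of some vertex of $\supp z$, which itself lies within $D$ of $v_0$, the choice $R(D):=D+L(D)$ gives (2).

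The norm bound (3) is the delicate point. The cellular boundary $\partial : C_{i+1}(K_z) \to Z_i(K_z)$ is a surjection between finite-dimensional normed spaces, hence admits a bounded right inverse whose operator norm depends only on the simplicial isomorphism type of $K_z$. Since $\Gamma$ acts on $X$, and thus on $\mathscr R_\kappa$, by simplicial automorphisms preserving both the height function and the horoball decomposition, and since there are finitely many $\Gamma$--orbits of vertices at each height (one at height $0$, one per $i \in I$ at each positive height), the isomorphism type of $K_z$ is determined by $D$, by the height $h(v_0)$ (and, in the horoball case, by $h(v_0')$), and by a finite amount of orbit data. Combining this with the inequality $h(v_0) \le \maxh(z) + \kappa D$ --- which follows from $d_X \le \kappa\, d_{\mathscr R}$ and the fact that height is $1$--Lipschitz along $X$--edges --- yields a function $M_\loc(D, \maxh z)$ bounding $\|a\|/\|z\|$, which can be made non-decreasing by taking suprema over smaller arguments.

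The main obstacle is precisely this norm bound: existence and locality of the filling follow directly from the contractibility results of Section \ref{sezione 4}, whereas the uniformity of $M_\loc$ requires $\Gamma$--equivariance to reduce the a priori unbounded parameter $v_0$ to finitely many discrete orbit types at each height.
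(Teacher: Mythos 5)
Your proof is correct and follows essentially the same route as the paper's: fill inside the contractible finite complexes provided by Corollary \ref{ohq3iulrejk} and Proposition \ref{wupiEACJNL}, use finite-dimensionality to obtain a bounded linear filling operator, and use the $\Gamma$--action together with the height bound to reduce to finitely many isomorphism types of such complexes. The only (immaterial) difference is that you split into the horoball and non-horoball cases, whereas the paper builds a single linear operator $\theta^{h,j,D}$ per orbit type by choosing a basis of the cycle space that extends bases of the cycles contained in the $C$--horoballs, thereby securing condition (4) within one uniform construction.
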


\begin{proof} Fix integers $h, D$ and $j \in I$. Let $c_1$, \ldots, $c_n$ be the collection of the $i$--dimensional simplices contained in $B_D((1, j, h))$. Let $z_1$, \ldots, $z_m$ be a basis of the subspace of cycles in $\left\langle c_1, \ldots, c_n \right\rangle_\R$, which extends bases of the spaces of cycles contained in the $C$--horoballs. We choose $a_1$, \ldots, $a_m$ so that $\partial a_1 = z_1$, \ldots, $\partial a_m = z_m$. If $z_k$ is not contained in any $C$--horoball, the chain $a_k$ may be chosen in $B_{D}((1, j, h))$, since this is contractible by Corollary \ref{ohq3iulrejk}. Otherwise, if $z_k$ is contained in some $C$--horoball, we take $a_k$ in the subcomplex $B_r^X(v_0)$ contained in that horoball, as described in Proposition \ref{wupiEACJNL}.

We extend the map $z_k \mapsto a_k$ by linearity, obtaining a linear map $\theta^{h, j, D}$ between normed spaces, where the first one is finite dimensional. Therefore $\theta^{h, j, D}$ is bounded.

Let now $z$ be a cycle in $C_i(\mathscr R_\kappa)$ with $\diam (\supp z) \le D$, and $\maxh(z) \le H$. Up to $\Gamma$--action, we may suppose that $z$ contains a vertex of the form $(1, j, h)$ for some $h \le H$, and $j \in I$. It follows that $\supp z \subseteq B_D((1, j, h))$. Then we put $a := \theta^{h, j, D}(z)$. Since $(h, j)$ is an element of the finite set $\{1, 2, \ldots, H\} \times I$, we may bound the norm of $a$ uniformly, and put $M_\loc(D, H) := \max\{\|\theta^{h, j, D}\| \: h \le H, \, j \in I\}$.
\end{proof}

\subsection{Finite sets of geodesic segments in hyperbolic spaces and filling inequalities}

The results we are going to present are inspired by the well-known fact that geodesics in hyperbolic spaces can be approximated by embedded trees (see \cite[Chapter 2]{GH}). The idea is that a set of $n$ geodesic segments \emph{resembles} a simplicial tree where all pairs of edges having a point in common diverge very rapidly from that point. In other words, the vertices of the tree are the only points near which two edges may be close to each other. Moreover, this tree is finite, and the number of vertices and edges depends only on $n$. Hence, we can split this tree into a set of balls of fixed diameter and a set of subedges that are very far from each other.

Let $k \ge 1$ and let $z$ be a $k$--dimensional cycle. If $\supp z$ is contained in an $L$--neighborhood of a set of $n$ geodesic segments, we will be able to express it as a sum of \emph{edge-cycles} and \emph{vertex-cycles}, that we can fill using the Local Lemma \ref{lemma locale} and Corollary \ref{hn0geijmovs} respectively. Therefore we will be able to fill $z$ with some control of its norm, as described in Theorem \ref{cicli in quasi grafi}. Some of the methods of this section are inspired by the proof of \cite[Lemma 5.9]{Min4}.

Let $[0, |\gamma|] \ni t \mapsto \gamma(t)$ be an arc-length parametrization of a geodesic segment $\gamma$ (where $|\gamma|$ is the length of $\gamma$) in some metric space $W$. Let $x = \gamma(t)$, for some $t \in [0, |\gamma|]$, and let $s \in \R$. By ``$\gamma(x + s)$'' we mean the point $\gamma(t+s)$, if this is defined. Otherwise, if $t+s > |\gamma|$ ($t+s < 0$) we put $\gamma(x+s) := \gamma(|\gamma|)$ ($\gamma(x+s) := \gamma(0)$). If $t < r$ and $y = \gamma(r)$, by $\gamma {}_{\Big| [x, y]}$ we mean the restriction of $\gamma$ to the interval $[t, r]$ (or its image in $W$).

\begin{lem} \label{cicli sottili} Let $i \ge 1$. Then there are functions $R \: \N \to \N$, $D \: \N \to \N$ and $L \: \N \times \N \to \R$ which satisfy the following properties: let $z \in Z_i(\mathscr R_\kappa)$ be such that $\supp z \subseteq \mathscr N_S(\gamma)$, for some geodesic segment $\gamma$ and $S \in \N$. Then, for $R = R(S)$ and $D = D(S)$ there is an expression
$$ z = \sum_k z_k $$ 
where the $z_k$ are cycles such that
\begin{enumerate}
    \item $\supp z_k \subseteq B_R(x_k)$, where $x_k := \gamma(kD + D/2)$;
    \item \label{pq347hgureiajdks} $\sum_k \|z_k\| \le L(S, \maxh(z)) \|z\|$;
    \item if $\supp z \subseteq \mathscr H_C$ for a $C$--horoball $ \mathscr H_C$, then the same is true for every $z_k$.
\end{enumerate}
\end{lem}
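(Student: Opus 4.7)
The plan is to slice the $S$-neighborhood of $\gamma$ into slabs of width $D$ along $\gamma$, restrict $z$ to each slab, and then correct the restrictions by small fillings of the $(i-1)$-cycles that the slicing produces at each cut, using the Local Lemma \ref{lemma locale}.

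To begin, I would fix $D = D(S)$ with $D > 2S + \kappa$. For each vertex $v \in \supp^{(0)} z$ pick a closest point $\gamma(t(v))$ on $\gamma$; if $v, w$ both lie in $\mathscr N_S(\gamma)$ and in a common simplex of $\mathscr R_\kappa$, the triangle inequality combined with the fact that $\gamma$ is a geodesic gives $|t(v) - t(w)| \leq 2S + \kappa < D$. Setting $V_k := \{v \in \supp^{(0)} z : t(v) \in [kD, (k+1)D)\}$ and $W_k := \bigsqcup_{j \leq k} V_j$, it follows that every simplex of $z$ has all its vertices contained in $V_k \cup V_{k+1}$ for a single index $k$.

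Next I would set $y_k := z|_{W_k}$ and $c_k := \partial y_k$, which is automatically an $(i-1)$-cycle since $\partial^2 = 0$. Using $\partial z = 0$, one checks that $c_k$ is supported on those $(i-1)$-faces $\tau \subseteq V_k$ that complete to an $i$-simplex of $z$ straddling $V_k$ and $V_{k+1}$; in particular $\supp c_k$ lies in a ball $B_{R_0}(\tilde x_k)$ of radius $R_0 = R_0(S)$ around a vertex $\tilde x_k$ close to $\gamma((k+1)D)$, and $\maxh\, c_k \leq \maxh\, z$. Lemma \ref{lemma locale} then supplies fillings $a_k$ with $\partial a_k = c_k$, $\supp a_k \subseteq B_{R_1}(\tilde x_k)$ for some $R_1 = R_1(S)$, $\|a_k\| \leq M_\loc(R_0, \maxh\, z)\,\|c_k\|$, and $a_k \subseteq \mathscr H_C$ whenever $c_k \subseteq \mathscr H_C$.

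Finally I would define $z_k := z|_{V_k} + a_{k-1} - a_k$. Then $\partial z_k = (c_k - c_{k-1}) + (c_{k-1} - c_k) = 0$, and $\sum_k z_k = z$ by telescoping. Conclusion (1) follows because $z|_{V_k}$ and $a_{k\pm 1}$ all live within some $R = R(S)$ of $x_k = \gamma(kD + D/2)$; conclusion (3) follows from Lemma \ref{lemma locale}(4). For (2), the key observation is that each simplex of $z$ straddles at most one cut, so $\sum_k \|c_k\| \leq \|z\|$, which yields $\sum_k \|z_k\| \leq (1 + 2 M_\loc(R_0(S), \maxh\, z))\|z\|$. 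I expect the main technical point to be verifying that the slab width $D$ can be chosen depending only on $S$ (not on $z$); once the elementary projection estimate $|t(v) - t(w)| \leq 2S + \kappa$ is in place, the rest is formal bookkeeping.
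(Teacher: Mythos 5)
Your proposal follows essentially the same route as the paper's proof: slice $\mathscr N_S(\gamma)$ into slabs of width $D = D(S)$ along $\gamma$, observe that the boundaries of the slices concentrate in balls of $S$--controlled radius around the cut points, fill those $(i-1)$--cycles with the Local Lemma, and correct the slices by these fillings. The paper slices by distance from $\gamma(0)$ rather than by nearest-point projection and uses a mod--$5$ disjointness trick for the norm bound, but these are cosmetic differences; your device of setting $c_k := \partial(z|_{W_k})$ for the cumulative restriction is in fact slightly cleaner, since it makes the cut chains automatically cycles even in dimension $0$, where the paper needs a separate induction to see that the two halves of $\partial \overline z_k$ are cycles. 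Two small repairs: in the last step the slab piece must be $z|_{W_k} - z|_{W_{k-1}}$ (all simplices with vertices in $V_{k-1}\cup V_k$ and at least one vertex in $V_k$), not the literal $z|_{V_k}$ --- otherwise the straddling simplices are dropped and neither $\partial z_k = 0$ nor $\sum_k z_k = z$ holds, though your own computation of $\partial z_k$ shows this is what you meant. And $\sum_k \|c_k\| \le (i+1)\|z\|$ rather than $\le \|z\|$, since each straddling $i$--simplex contributes up to $i+1$ faces to some $c_k$; this harmless factor is absorbed into $L(S,\maxh(z))$.
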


\begin{proof} Take $D \ge 2S + 3$. Let $y_k := \gamma(kD)$. We put
$$ \overline z_k := z {}_{\big| B_{(k+1)D}(y_0)} - z {}_{\big| B_{kD}(y_0)}. $$
In other words, $ \overline z_k$ is the restriction of $z$ to the set of simplices contained in $B_{(k+1)D}(y_0)$ that are not contained in $B_{kD}(y_0)$. It follows immediately that $z = \sum_k \overline z_k$. Let us put: 
$$ R := D/2 + 2S \qquad r := S+1. $$ 
Notice that $D > 2S + 2 = 2r$. We have: 
\begin{equation} \label{p3ufheqw} \supp \overline z_k \subseteq \mathscr N_S(\gamma) \cap \left(B_{(k+1)D}(y_0) \setminus B_{kD-1}(y_0) \right) \subseteq B_{D/2 + 2S}(x_k) =  B_{R}(x_k). \end{equation}
In fact, let $v$ be a vertex in $\mathscr N_S(\gamma) \cap \left(B_{(k+1)D}(y_0) \setminus B_{kD-1}(y_0) \right)$. Let $x \in \gamma$ be such that $d(v, x) \le S$. Notice that $x \in B_{(k+1)D+S}(y_0) \setminus B_{kD-1-S}(y_0)$, i.e. $kD - S \le d(y_0, x) \le (k+1)D + S$. Hence $d(x, x_k) \le D/2 + S$, and $d(v, x_k) \le d(v, x) + d(x, x_k) \le D/2 + 2S$, whence the second inclusion in \eqref{p3ufheqw} follows. It follows from \eqref{p3ufheqw} that 
\begin{equation} \label{uofwh} \| \overline z_k\| \le \|z {}_{\big| B_{R}(x_k)} \|. \end{equation}
Now, from the first inclusion of \eqref{p3ufheqw} we get
$$ \supp^{(0)} (\partial \overline z_k) \subseteq $$ 
$$ \mathscr N_S(\gamma) \cap \left(\left\{x \in \mathscr R_\kappa^{(0)}(X) \: kD \le d(y_0, x) \le kD+1\right\} \sqcup \right. $$ $$ \phantom{mmmmmmmm} \left. \sqcup \left\{x \in \mathscr R_\kappa^{(0)}(X) \: (k+1)D - 1 \le d(y_0, x) \le (k+1)D \right\}\right) \subseteq  $$ $$ \subseteq B_{S+1}(\gamma(kD)) \sqcup B_{S+1}(\gamma((k+1)D)) = B_{r}(y_k) \sqcup B_{r}(y_{k+1}). $$
Therefore, since the last two subcomplexes are disjoint, we can put 
$$ \partial \overline z_k = b_k' + b_k \qquad \supp b_k' \subseteq B_r(y_{k}), \quad \supp b_k \subseteq B_r(y_{k+1}). $$
Notice that $\|b_k'\| + \|b_k\| = \|b_k' + b_k\| \le (i+1) \| \overline z_k\| \le (i+1)\|z {}_{\big| B_R(x_k)}\|$. We have
$$ 0 = \partial z = \sum_k \partial \overline z_k = \sum_k b_k + b_k' = \sum_k b_{k} + b_{k+1}'. $$
By looking at supports, we note that it follows that $b_k = - b_{k+1}'$. Since $b_k' + b_k$ is a cycle (in the augmented simplicial chain complex of $ \mathscr R_\kappa$) and $b_k$ and $b_k'$ have disjoint supports, it follows that $b_k$ and $b_k'$ are cycles too, \emph{if} their dimension is at least $1$. The same is true if the $b_k^{(')}$ are $0$--dimensional. Indeed, it is easy to see that $b_0' = 0$, hence $b_0 = b_1'$ is a cycle. By induction, if $b_k'$ is a cycle, it follows that $b_k = b_{k+1}'$ is a cycle too. Hence all the $b_k^{(')}$ are cycles.

We fill $b_k$ and $b_k'$ by $a_k'$ and $a_k$ using the local lemma, and we also require that $a_k' = -a_{k-1}$. Since $b_k$ and $b_k'$ have diameter bounded by $2r$, by the local lemma we have a function $L(S, \cdot) := M_\loc(2r, \cdot) = M_\loc(2(S+1), \cdot)$ such that $\|a_k\| \le L(S, \maxh(b_k)) \|b_k\|$. If $H = \maxh(z)$, then
\begin{equation} \label{ertyhgf} \|a_k\| \le L(S, \maxh(b_k)) \|b_k\| \le L(S, H) (i+1) \|z {}_{\big| B_R(x_k)}\|. \end{equation}
Hence also

\begin{equation} \label{ertyhgf1} \|a_k'\| = \|a_{k-1}\| \le L(S, H) (i+1) \|z {}_{\big| B_R(x_{k-1})}\|. \end{equation}

We put
$$ z_k := \overline z_k - a_{k}' - a_k. $$
By \eqref{uofwh}, \eqref{ertyhgf} and \eqref{ertyhgf1}, there is a function $L' \: \N \times \N \to \R$ such that
$$ \|z_k\| \le L'(S, H) \|z {}_{\big| B_{R+D}(x_k)} \|. $$
We have $ \partial z_k = b_k + b_k' - b_{k}' - b_k = 0$, and
$$ \sum_k z_k = \sum_k \overline z_k - a_{k}' - a_k = \sum_k \overline z_k - \sum_k \overline a_{k}' + a_k = z - \sum_k \overline a_{k}' + a_{k-1} = z. $$
Finally, since the balls $B_{R+D}(x_k)$ and $B_{R+D}(x_{k+5})$ have disjoint supports (because 
$4S < 2D \Rightarrow 2R = D + 4S \le 3D \Rightarrow 2(R + D) \le 5D$), we have
$$ \sum_k \|z {}_{\big| B_{R+D}(x_k)} \| = \sum_{j=0}^4 \Big\| \sum_{k = j \mod 5} z {}_{\big| B_{R+D}(x_k)} \Big\| \le 5 \|z\|. $$

Therefore, Condition (2) in the statement holds with $L(S, H) = 5 L'(S, H)$. Finally, (3) follows from the local lemma.
\end{proof}

\begin{cor} \label{hn0geijmovs} For every $i \in \N$ there are functions $S' \: \N \to \N$ and $M_\thin \: \N \times \N \to \R$ such that, for every geodesic segment $\gamma$ and every cycle $z \in Z_i( \mathscr R_\kappa)$ with $\supp z \subseteq \mathscr N_S(\gamma)$ for some $S \ge 0$, there is a filling $a$ of $z$ with $\supp a \subseteq \mathscr N_{S'}(\gamma)$ and such that
$$ \| a \| \le M_\thin(S, \maxh(z)) \|z\|. $$
Moreover, we may impose that $a$ is contained in a $C$--horoball $ \mathscr H_C$, if the same is true for $z$ ($C$ is as in Remark \ref{q0378yoehilrujwd}).
\end{cor}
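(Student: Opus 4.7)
The plan is to combine Lemma \ref{cicli sottili} with the Local Lemma \ref{lemma locale} in a direct way: first use the splitting lemma to decompose $z$ into a controllable sum of small cycles concentrated near $\gamma$, then fill each piece locally, and finally sum the local fillings.

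More precisely, I would proceed as follows. Given $z \in Z_i(\mathscr R_\kappa)$ with $\supp z \subseteq \mathscr N_S(\gamma)$, apply Lemma \ref{cicli sottili} with functions $R = R(S)$ and $D = D(S)$ to write
$$ z = \sum_k z_k, \qquad \supp z_k \subseteq B_R(x_k), \qquad \sum_k \|z_k\| \le L(S, \maxh(z)) \|z\|, $$
where $x_k = \gamma(kD + D/2)$, and such that each $z_k$ lies in $\mathscr H_C$ whenever $z$ does. Each $z_k$ is a cycle of diameter at most $2R$, whose highest vertex has height at most $\maxh(z) + R$. Then the Local Lemma \ref{lemma locale} provides a filling $a_k \in C_{i+1}(\mathscr R_\kappa)$ of $z_k$ with $\supp a_k \subseteq B_{R(2R)}(x_k)$ and
$$ \|a_k\| \le M_\loc(2R, \maxh(z) + R) \|z_k\|, $$
again with $a_k$ confined to the same $C$-horoball as $z_k$ when applicable.

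Now I would set $a := \sum_k a_k$, which by construction satisfies $\partial a = \sum_k z_k = z$. For the support, note that each $a_k$ is supported in a ball of radius $R(2R)$ centered at the point $x_k \in \gamma$, so $\supp a_k \subseteq \mathscr N_{S'}(\gamma)$ with $S' := S'(S) = R(2R(S))$; hence $\supp a \subseteq \mathscr N_{S'}(\gamma)$. For the norm bound, combining the two estimates yields
$$ \|a\| \le \sum_k \|a_k\| \le M_\loc(2R, \maxh(z) + R) \sum_k \|z_k\| \le M_\loc(2R, \maxh(z) + R) \cdot L(S, \maxh(z)) \cdot \|z\|, $$
so defining $M_\thin(S, H) := M_\loc(2R(S), H + R(S)) \cdot L(S, H)$ gives the desired inequality. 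The horoball clause is immediate from clause (3) of Lemma \ref{cicli sottili} together with clause (4) of the Local Lemma.

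I do not expect any substantial obstacle: all the geometric and quantitative work has already been absorbed into Lemma \ref{cicli sottili} and the Local Lemma. The only mildly delicate point is bookkeeping the dependence of $\maxh(z_k)$ on $\maxh(z)$ (since the $z_k$ involve auxiliary local fillings and may introduce vertices slightly higher than those of $z$), but this is controlled because the $z_k$ are supported in a bounded neighborhood of $x_k \in \gamma \subseteq \mathscr N_S(\supp z)$, so their maximal height is bounded by $\maxh(z)$ plus a constant depending only on $S$, keeping $M_\thin$ a function of $S$ and $\maxh(z)$ alone.
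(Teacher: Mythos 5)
Your proposal is correct and follows essentially the same route as the paper: decompose $z$ via Lemma \ref{cicli sottili}, fill each piece with the Local Lemma, and sum, with the only (harmless) difference being a slightly different constant in the bound on $\maxh(z_k)$ (the paper uses $\maxh(z)+2R$ where you use $\maxh(z)+R$). Your explicit verification of the support condition $\supp a \subseteq \mathscr N_{S'}(\gamma)$ is a welcome detail the paper leaves implicit.
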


\begin{proof} Split $z$ as the sum of the cycles $z_k$ be as in the previous lemma. Now, let $R = R(S)$ as in the previous lemma. If $\supp z \cap B_R(x_k) \ne \emptyset $, we have $\maxh (z_k) \le \maxh (B_R(x_k))$; otherwise it is clear from the construction of $z_k$ that $z_k = 0$. In any case we have $\maxh (z_k) \le \supp (z) + 2R$. Moreover, by (1) of the previous lemma, $\max \, \diam (z_k) \le 2R$.
Fill $z_k$ with $a_k$ as in the local lemma, and put $a = \sum_k a_k$. Let $M_\loc \: \N \times \N \to \R$ be as in the local lemma. Hence
$$ \|a\| \le \sum_k \|a_k\| \le$$ $$\le M_\loc(\max \diam (z_k), \maxh(z) +2R) \sum_k \|z_k\| \le $$ 
$$ \le M_\loc(2R, \maxh(z)+2R) \,\, L(S, \maxh(z)) \|z\|. $$ 
So we can put $M_\thin(S, h) := M_\loc(2R, h + 2R) L(S, h)$.
\end{proof}

The next lemma holds for every $\delta$--hyperbolic space $X$.

\begin{lem} \label{quasiGrafi} Let $\alpha_1$, \ldots, $\alpha_n$ be $n$ geodesic segments. Then, for every $S \in \N$, there exist constants $R = R(S, n)$, $ {p}{} = {p}{}(n)$, ${q}{} = {q}{}(n)$, points $x_1$, \ldots, $x_{{p}{}} \in X$ and geodesic segments $\gamma_1$, \ldots, $\gamma_{{q}{}}$ such that 
$$ \bigcup_{k=1}^n \alpha_k \subseteq \bigcup_{i=1}^{{p}{}} B_R(x_i) \cup \bigsqcup_{j=1}^{{q}{}} \mathscr N_{2\delta}(\gamma_j) $$
where the $\gamma_j$ are $S$--far from each other.
\end{lem}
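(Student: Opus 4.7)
The plan is to proceed by induction on $n$. The base case $n = 1$ is immediate: take $\gamma_1 := \alpha_1$, $q := 1$ and $p := 0$, so that $\alpha_1 \subseteq \mathscr N_{2\delta}(\gamma_1)$. For the inductive step I apply the inductive hypothesis to $\alpha_1, \ldots, \alpha_{n-1}$ with an \emph{enlarged} separation parameter $S^* := 100 S$, obtaining balls $B_{R^*}(x_1), \ldots, B_{R^*}(x_{p^*})$ and pairwise $S^*$-far geodesic segments $\gamma_1, \ldots, \gamma_{q^*}$ whose $2\delta$-neighbourhoods (together with the balls) cover $\bigcup_{k<n}\alpha_k$. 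The work is to fit $\alpha_n$ into this picture.

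For each $j$ I examine the set $J_j := \{t \in [0,|\alpha_n|] : d(\alpha_n(t), \gamma_j) \le S\}$; when non-empty, let $[a_j, b_j]$ be its convex hull and let $p_j, q_j$ denote closest points on $\gamma_j$ to $\alpha_n(a_j), \alpha_n(b_j)$. Applying the $2\delta$-slim quadrilateral inequality to the vertices $\alpha_n(a_j), \alpha_n(b_j), q_j, p_j$ yields two key facts. First, every point of $\alpha_n|_{[a_j,b_j]}$ sits in the $2\delta$-neighbourhood of one of the three other sides; since the two ``cap'' sides $[p_j, \alpha_n(a_j)]$, $[q_j, \alpha_n(b_j)]$ have length $\le S$, this forces $\alpha_n|_{[a_j, b_j]} \subseteq \mathscr N_{S + 2\delta}(\gamma_j)$, and combined with $S^* \gg 2S + 4\delta$ it forces the intervals $[a_j, b_j]$ to be pairwise disjoint and separated in $[0,|\alpha_n|]$ by at least $S^* - 2S$. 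Second, the \emph{middle} portion $\alpha_n|_{[a_j + S + 2\delta,\, b_j - S - 2\delta]}$ lies inside $\mathscr N_{2\delta}(\gamma_j)$: any point in this middle range cannot be $2\delta$-close to either cap side (that would place it within $S + 2\delta$ of $\alpha_n(a_j)$ or $\alpha_n(b_j)$ along the geodesic $\alpha_n$), so by elimination it must be $2\delta$-close to the fourth side $\gamma_j|_{[p_j, q_j]} \subseteq \gamma_j$.

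Using these two facts I treat each $[a_j,b_j]$ according to its length. In the \emph{long} case $b_j - a_j > 2S + 4\delta$ I absorb the middle into $\mathscr N_{2\delta}(\gamma_j)$ and cover the two cap strips of $\alpha_n$ by two balls of radius $S + 2\delta$. In the \emph{short} case $b_j - a_j \le 2S + 4\delta$ I cover instead the enlarged interval $\alpha_n|_{[a_j - S,\, b_j + S]}$, of diameter $\le 4S + 4\delta$, by a single ball of radius $2S + 2\delta$; the enlargement by $S$ on each side is exactly what guarantees the $S$-separation between neighbouring new tubes. The uncovered part of $\alpha_n$ is then a disjoint union of sub-intervals $U_1, \ldots, U_m$, each lying outside every $[a_j, b_j]$ and hence at distance $>S$ from each $\gamma_j$; I declare each $\alpha_n|_{U_l}$ a new tube $\gamma_{q^* + l}$.

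All $S$-far conditions then follow: the new tubes are subsegments of the geodesic $\alpha_n$ separated along $\alpha_n$ (hence in $X$) either by a long $[a_j, b_j]$ of length $\ge 2S + 4\delta$ or by an enlarged short $[a_j - S,\, b_j + S]$ of length $\ge 2S$; the original $\gamma_j$ stay pairwise $S^*$-far $\ge S$-far; and each new tube is by construction $>S$-far from every $\gamma_j$. The counts $p \le p^* + 2q^*$, $q \le 2q^* + 1$ and the radius $R \le \max(R^*,\, 2S + 2\delta)$ are bounded recursively in $S$ and $n$, closing the induction. The main obstacle is the extraction of fact~(ii) from the $2\delta$-slim quadrilateral, which is what makes the thin $2\delta$-tubes do any useful covering work at all; without this middle-portion refinement only the much weaker $(S + 2\delta)$-closeness would be available, and $\mathscr N_{2\delta}(\gamma_j)$ would be too narrow to absorb the fellow-travelling parts of $\alpha_n$.
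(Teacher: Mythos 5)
Your proposal is correct and follows essentially the same route as the paper's proof: induction on $n$, applying the inductive hypothesis to $\alpha_1,\ldots,\alpha_{n-1}$ with an enlarged separation constant, locating the first and last points of $\alpha_n$ that come $S$-close to each existing tube, using hyperbolicity to place the middle of each such fellow-travelling interval inside $\mathscr N_{2\delta}(\gamma_j)$, covering the transition caps by finitely many balls, and declaring the leftover subsegments of $\alpha_n$ new tubes. The only differences are cosmetic (your $S^*=100S$ versus the paper's $2S+6\delta+1$, and your explicit long/short case split, which the paper handles implicitly by allowing empty middle intervals and trimming $S+\delta$ off each new tube).
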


\begin{proof} We prove the statement by induction on $n$. The case $n=1$ is obvious. Suppose that the statement is proved for $n-1$ segments, and put
$$ {p}{} = {p}{}(n-1), \qquad {q}{} = {q}{}(n-1). $$ 
Hence we have balls $B_R(x_1)$, \ldots, $B_R(x_{{p}{}})$ and geodesic segments $\gamma_1$, \ldots, $\gamma_{{q}{}}$ associated with $\alpha_1$, \ldots, $\alpha_{n-1}$ as in the statement. We fix an orientation on $\alpha_n$ and for every $1 \le j \le {q}{}$ such that $d(\gamma_j, \alpha_n) \le S$ (here $d$ denotes the distance between sets) we denote by $x_j$ (resp. $y_j$) the first (resp. the last) point on $\alpha_n$ such that $d(x_j, \gamma_j) \le S$, $d(y_j, \gamma_j) \le S$. By hyperbolicity, it is easy to see that 
$$ \alpha_n {}_{\big| [x_j + S + \delta, y_j - S - \delta]} \subseteq \mathscr N_{2 \delta} (\gamma_j) $$
(some of these intervals may be empty). Since the $\gamma_j$ are $(2S + 6 \delta + 1)$--far from each other, we claim that, up to reindexing, we have
$$ x_1 \le y_1 \le x_2 \le y_2 \le \cdots \le x_{k} \le y_{k} $$
for $k \le q$. Indeed, $x_j \le y_j$ by definition. Moreover, the points between $x_j$ and $y_j$ are $(S+3 \delta)$--close to $\gamma_j$. Since there cannot be points in $X$ that are ($S+3 \delta$)--close to two different $\gamma_j$, we have $[x_j, y_j] \cap [x_k, y_k] = \emptyset$, for $j \ne k$, whence the claim.

The segments of type $\alpha_n {}_{\big| [y_j + S + \delta, \, x_{j+1} - S - \delta]}$ (where by $y_0$ and $x_{k+1}$ we mean the left and right extreme of $\alpha_n$ respectively) are $S$--far from all the $\gamma_j$ and ($2S + 2 \delta$)--far from each other. Adding to the $\gamma_j$ the segments of type $\alpha_n {}_{\big| [y_j + S + \delta,  x_{j+1} - S - \delta]}$ and to the $B_R(x_i)$ the balls of type $B_{S + \delta}(x_j)$, $B_{S + \delta}(y_j)$ we complete the inductive step.
\end{proof}

We need in Theorem \ref{cicli in quasi grafi} a stronger version of the lemma above in order to deal with $1$--dimensional cycles. 
In the notation of Lemma \ref{quasiGrafi}, we say that two distinct balls $B_1$ and $B_2$ are \emph{linked} if there is a $\gamma_j$ such that $d(\gamma_j, B_1) \le S$, $d(\gamma_j, B_2) \le S$ and, if $v_1, v_2 \in \gamma_j$ are such that $d(v_1, B_1) \le S$, $d(v_2, B_2) \le S$, there is no point $v_3 \in \gamma_j$ between $v_1$ and $v_2$ such that $d(v_3, B_3) \le S$, for some ball $B_3$ distinct from $B_1$ and $B_2$. We thus get a graph structure on the balls of Lemma \ref{quasiGrafi}.

For any $r \in \N$, we call \emph{$r$--cycle} a sequence $\{B_u\}_{u \in \Z/r\Z}$ of $r$ distinct balls, with $B_u$ linked to $B_{u+1}$ for all $u \in \Z/r\Z$. In the following lemma we prove that, if the balls are sufficiently far apart, there are no $r$--cycles for $r \ge 3$. Hence the graph is a forest, i.e. a graph which is a disjoint union of trees. In particular, we will be able to talk about \emph{leaf-balls}, i.e. balls that correspond to vertices that are ends of at most one edge. Notice that, in the conditions of Lemma \ref{quasiGrafiPotenziato}, for any pair of balls $B_1$ and $B_2$, there is at most one $\gamma_j$ such that $d(\gamma_j, B_1) \le S$ and $d(\gamma_j, B_2) \le S$.

\begin{lem} \label{quasiGrafiPotenziato} Suppose that we have an inclusion
$$ \bigcup_{k=1}^n \alpha_k \subseteq \bigcup_{u=1}^{{p}{}} B_R(x_u) \cup \bigsqcup_{j=1}^{{q}{}} \mathscr N_{2\delta}(\gamma_j) $$
where the $\alpha_k$ and $\gamma_j$ are geodesic segments, and the $\gamma_j$ are $S$--far apart, for some $S > (p{}+6) \delta$.

Moreover, suppose that the $x_u$ are $2p{}(R+S)$-far apart. Then there are no $r$--cycles, for any $r \ge 3$. \end{lem} 

\begin{proof} Up to reindexing, we may suppose that the balls $B_R(x_1)$, .., $B_R(x_r)$ constitute an $r$--cycle. Put $B_u :=B_R(x_u)$. We slightly abuse notation by identifying the natural numbers $1$, .., $r$ with the corresponding elements of $\Z/r\Z$. Let $l_u$ be the minimal subsegment of some $\gamma_j$ such that the ends of $l_u$ are $S$--close to $B_u$ and $B_{u+1}$ respectively.

In the following, we denote by $[x_u, x_{u+1}]'$ the subsegment of $[x_u, x_{u+1}]$ which is outside the balls $B_{R+S}(x_u)$ and $B_{R+S}(x_{u+1})$. By $\delta$--hyperbolicity, the Hausdorff distance between $l_u$ and $[x_u, x_{u+1}]'$ is at most $4 \delta$. For a geodesic $r$--agon in a $\delta$--hyperbolic space, any edge is contained in the $(r-2)\delta$--neighborhood of the union of the other edges. Hence $[x_u, x_{u+1}] \subset \bigcup_{k \ne u} \mathscr N_{(r-2) \delta}([x_k, x_{k+1}])$, therefore
$$ [x_u, x_{u+1}]' \subset \bigcup_{k \ne u} \mathscr N_{(r-2) \delta}([x_k, x_{k+1}]') \cup \bigcup_{k \ne u} B_{R+S+(r-2)\delta}(x_k) $$
Since the length of $[x_u, x_{u+1}]'$ is at least $2p{}(R+S) - 2(R+S) \gneq (r-1) (R+S+(r-2)\delta)$, it follows that the $r-1$ balls $B_{R+S+(r-2)\delta}(x_k)$ can't cover all of $[x_u, x_{u+1}]'$. Hence there is some $k \ne u$ such that $d([x_u, x_{u+1}]', [x_k, x_{k+1}]') \le (r-2) \delta$. Since the Hausdorff distance between $l_u$ and $[x_u, x_{u+1}]'$ ($l_k$ and $[x_k, x_{k+1}]'$) is at most $4 \delta$, it follows that the distance between $l_u$ and $l_k$ is less than $(r+6)\delta \le (p{}+6)\delta< S$, whence the contradiction.
\end{proof}

We now consider the problem of filling cycles whose supports are close to geodesic segments. 


\begin{theorem} \label{cicli in quasi grafi} Let $n,\,i,\,L \in \N$, $i \ge 1$, and let $C \in \N$ be as in Remark \ref{q0378yoehilrujwd}. Then there exists $L' = L'(n, i, L) \in \N$ such that, for every cycle $z \in Z_i( \mathscr R_\kappa)$ and every family of geodesic segments $\alpha_1$, \ldots, $\alpha_n$ such that $\supp z \subseteq \mathscr N_L (\alpha_1 \cup \ldots \cup \alpha_n)$, there exists $a \in C_{i+1}( \mathscr R_\kappa)$ with $ \partial a = z$ and

\begin{equation} \label{primo} \supp a \subset \mathscr N_{L'} (\supp z). \end{equation}

In particular, up to increasing $L'$, we have $\supp a \subseteq \mathscr N_{L'} (\alpha_1 \cup \ldots \cup \alpha_n)$, and $\maxh(a) \le \maxh(z) + L'$. Moreover, there exists a function $M_{} = {M_{}\,}(n, i, L) \: \N \to \N$ such that
\begin{equation} \label{secondo} \|a\| \le M_{}(\maxh(z)) \|z\|. \end{equation}

Finally, we can require that, if $z$ is contained in some $C$--horoball, $a$ is contained in the same $C$--horoball.
\end{theorem}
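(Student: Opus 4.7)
The plan is to combine Lemma \ref{quasiGrafi} with Corollary \ref{hn0geijmovs} and the Local Lemma \ref{lemma locale}: I will split $z$ into pieces supported either in a tubular neighbourhood of one well-separated geodesic (to be filled via Corollary \ref{hn0geijmovs}) or in a bounded-diameter ball cluster (to be filled via Lemma \ref{lemma locale}), and then patch the fillings together.

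Concretely, apply Lemma \ref{quasiGrafi} with $S$ chosen large compared to $L,\delta,\kappa$, obtaining centres $x_1,\dots,x_p$ and pairwise $S$-far geodesics $\gamma_1,\dots,\gamma_q$ with $\bigcup_k\alpha_k\subseteq\bigcup_iB_R(x_i)\cup\bigcup_j\mathscr{N}_{2\delta}(\gamma_j)$ and $p,q$ bounded in terms of $n$ and $S$. Set $V_j:=\mathscr{N}_{L+2\delta+\kappa}(\gamma_j)$; for $S$ large the $V_j$ are pairwise disjoint and no $\kappa$-diameter simplex of $\mathscr R_\kappa$ has vertices in two different $V_j$. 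After possibly enlarging the balls $B_R(x_i)$ to swallow any portion of $\gamma_j$ that dips into them, we may assume that the ``middle'' of $\gamma_j$ sits in $V_j$ at distance $\ge S/2$ from every $B_R(x_i)$ except the at most two balls capping its endpoints. Now split $z=z_0+\sum_j z_j$, where $z_j$ is the restriction of $z$ to simplices entirely contained in $V_j$; by disjointness, $\supp z_0\subseteq\bigcup_i B_{R'}(x_i)$ for some $R'=R'(n,L,k)$.

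Because $z$ is a cycle and the $V_j$ are well-separated, $\partial z_j$ is supported in the (at most two) bounded-diameter ``caps'' $V_j\cap B_{R'}(x_i)$. Writing $\partial z_j=\sum_e b_j^e$ cap-by-cap, each $b_j^e$ is itself a cycle (caps are disjoint and $\partial^{2}z_j=0$) in a ball of bounded diameter, so Lemma \ref{lemma locale} produces $c_j^e$ with $\partial c_j^e = b_j^e$, $\|c_j^e\|\le M_\loc\|b_j^e\|$ and support in a slightly enlarged ball. Set
$$ \tilde z_j:=z_j-\sum_e c_j^e,\qquad \tilde z_0:=z_0+\sum_{j,e}c_j^e. $$
These are cycles, $z=\tilde z_0+\sum_j\tilde z_j$, $\supp\tilde z_j\subseteq\mathscr{N}_{L''}(\gamma_j)$ and $\supp\tilde z_0\subseteq\bigcup_i B_{R''}(x_i)$. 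Apply Corollary \ref{hn0geijmovs} to each $\tilde z_j$ to obtain a filling $a_j$ with $\supp a_j\subseteq\mathscr{N}_{S'}(\gamma_j)$ and $\|a_j\|\le M_\thin(L'',\maxh z)\|\tilde z_j\|$. For $\tilde z_0$, cluster the at most $p$ balls $B_{R''}(x_i)$ by iteratively merging any pair within distance $\le\kappa$: each resulting cluster has diameter $\le 2pR''+(p-1)\kappa$, distinct clusters are more than $\kappa$ apart, and hence $\tilde z_0=\sum_\ell\tilde z_0^\ell$ splits into cycles, one per cluster, each filled by Lemma \ref{lemma locale} with some $a_\ell$. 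Setting $a:=\sum_j a_j+\sum_\ell a_\ell$ yields the desired filling: the support inclusion \eqref{primo} and the horoball-preserving clause follow from the analogous clauses in the invoked lemmas, and summing the $O(p+q)$ contributions, each bounded by a constant (depending on $n,k,L$, and $\maxh z$) times $\|z\|$, yields \eqref{secondo}.

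The main obstacle is confining $\supp\partial z_j$ to bounded caps near the endpoints of $\gamma_j$: the raw output of Lemma \ref{quasiGrafi} permits some ball centres $x_i$ to lie close to the interior of $\gamma_j$, which would produce unbounded strips of uncancelled boundary data. The remedy is the absorption step above --- portions of $\gamma_j$ meeting inconvenient balls are swallowed into enlarged balls --- which restores the clean ``geodesic plus caps'' structure at the cost of a slightly larger $R'$. A secondary bookkeeping point is that the maximum height of each intermediate cycle ($b_j^e$, $\tilde z_j$, $\tilde z_0^\ell$) stays bounded by $\maxh z$ plus a constant depending on $n,L,k$, so the height-dependence of $M_\loc$ and $M_\thin$ propagates to the claimed dependence $M_\graph=M_\graph(\maxh z)$.
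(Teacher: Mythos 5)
Your proof is correct and follows essentially the same route as the paper: decompose $z$ via Lemma \ref{quasiGrafi} into pieces supported in the disjoint geodesic neighbourhoods and in the balls, observe (using $k\ge 2$ and disjointness) that the interface boundary terms are cycles confined to bounded balls, correct the pieces with the Local Lemma \ref{lemma locale}, and fill the corrected pieces via Corollary \ref{hn0geijmovs} and the Local Lemma respectively. The only real difference is your ``absorption'' step, which the paper does not need: each interface cycle $b_j^i$ is automatically supported in a single bounded ball $B_{R'}(x_i)$ regardless of whether $x_i$ lies near the interior or an endpoint of $\gamma_j$, so no unbounded strips of boundary data can arise.
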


\begin{proof} Let $\N \ni S = 2L + 4\delta +1$. Let $R = R(S, n)$, ${p}{} = {p}{}(n)$ and ${q}{} = {q}{}(n)$ be as in Lemma \ref{quasiGrafi}, in such a way that for some vertices $x_u$ and geodesic segments $\gamma_j$ which are $S$--far from each other
\begin{equation} \label{n00afpw} \bigcup_k \alpha_k \subseteq \bigcup_{u=1}^{{p}{}} B_R(x_u) \cup \bigsqcup_{j=1}^{{q}{}} \mathscr N_{2 \delta}(\gamma_j). \end{equation}

Let $z$ be a cycle whose support is contained in an $L$--neighborhood of the $\alpha_u$. Hence

\begin{equation} \label{9034y85efios1} \supp z \subseteq \bigcup_{u=1}^{{p}{}} B_{R+L}(x_u) \cup \bigsqcup_{j=1}^{{q}{}} \mathscr N_{2\delta+L}(\gamma_j). \end{equation}
The fact that the $ \mathscr N_{2 \delta + L}(\gamma_j)$ are pairwise disjoint is a consequence of our requirements on $S$. 

By suitably choosing a subset $I$ of $\{1, \ldots, {p}{}\}$, we get that there exists $R + S + 1 \leq R' \le (2p{}+1)^{{p}{}} (R+S+1)$ such that the $x_{u}$, $u \in I$, are $2p{}(R'+S)$ far apart, and $ \bigcup_{u=1}^{{p}{}} B_{R+S+1}(x_u) \subseteq \bigsqcup_{u \in I} B_{R'}(x_{u})$. Indeed, the case ${p}{} = 1$ is trivial. Otherwise, if two balls $B_{R+S+1}(x_{u_1})$ and $B_{R+S+1}(x_{u_2})$ are not $2p{}(R+S)$ far apart, we consider the balls $B_{(2p{}+1)(R+S+1)}(x_{u})$, for all $u \ne u_2$. We have that $B_{R+S+1}(x_{u_1}) \cup B_{R+S+1}(x_{u_2}) \subseteq B_{2p{}(R+S+1)}(x_{u_1})$. Then we continue by reverse induction on ${p}{}$.

We have
\begin{equation} \label{2roqohuirel} \supp z \subseteq \bigcup_{u=1}^{{p}{}} B_{R+L+1}(x_u) \cup \bigsqcup_{j=1}^{{q}{}} \mathscr N_{2\delta+L}(\gamma_j) \subseteq \bigsqcup_{u \in I} B_{R'}(x_{u}) \cup \bigsqcup_{j=1}^{{q}{}} \mathscr N_{2\delta+L}(\gamma_j). \end{equation}
Put 
$$ z' := z {}_{\big| \bigsqcup_{j=1}^N \mathscr N_{2\delta+L}(\gamma_j)}. $$
There is a unique expression
$$ z' = \sum_{j=1}^{{q}{}} z_{\gamma_j} \qquad \textrm{where } \supp z_{\gamma_j} \subseteq \mathscr N_{2\delta+L}(\gamma_j). $$ 
By \eqref{2roqohuirel} and the definition of $z'$ and $ \bigcup_{u=1}^{{p}{}} B_{R+L+1}(x_u) \subseteq \bigsqcup_{u \in I} B_{R'}(x_{u})$  we get
$$ \supp \partial z_{\gamma_j} \subseteq \bigsqcup_{u \in I} B_{R'}(x_{u}). $$ 
Hence we can put: 
\begin{equation} \label{w8or7js} \partial z_{\gamma_j} = \sum_{u \in I} b_j^{u} \qquad \supp b_j^{u} \subseteq B_{R'}(x_{u}) \end{equation} 
(this expression being unique). 

Suppose that $i \ge 2$. Then, by \eqref{w8or7js} and the disjointness of the $B_{R'}(x_{u})$, $u \in I$, the $b_j^{u}$ must all be cycles. 

The same is true if $i=1$. Indeed, let $B_{R'}(x_u)$ be a leaf-ball as in Lemma \ref{quasiGrafiPotenziato}. Fix a $\gamma_j$. There are three possibilities. If $d(\gamma_j, B_{R'}(x_{u})) > S$ then $b_j^u = 0$. If $d(\gamma_j, B_{R'}(x_{u})) \le S$ and $d(\gamma_j, B_{R'}(x_{k})) > S$ for every $k \ne u$, then $b_j^u = \partial z_{\gamma_j}$. In both cases $b_j^u$ is a cycle. Suppose now that there is some ball $B_{R'}(x_k)$, $k \ne u$ such that $d(\gamma_j, B_{R'}(x_k)) \le S$, $d(\gamma_j, B_{R'}(x_u)) \le S$. By definition of leaf-ball, there is at most one such $j$. For any $u$, the sum $b_j^u + \sum_{r \ne j} b_r^u$ is a cycle. Since the $b_r^u$ in the sum are all cycles, it follows that $b_j^u$ is a cycle too. Hence in any case, if $u$ corresponds to a leaf-ball, all $b_j^u$ are cycles. By an inductive argument, we can apply the same line of reasoning to the balls that are connected to leaf-balls. It follows that all $b_j^u$ are cycles.

Let $a_{j}^{u}$, $u \in I$, be such that $ \partial a_{j}^{u} = b_{j}^{u}$ as in the local lemma. By definition of $z'$, 
$$\supp (z-z') \subseteq \bigsqcup_{u \in I} B_{R'}(x_{u}). $$

For $u \in I$, let $z_{u}$ be the restriction of $z-z'$ to $B_{R'}(x_{u})$. Then
$$ z-z' = \sum_{u \in I} z_{u} $$
and
$$ 0 = \partial z = \partial (z-z') + \partial z' = \sum_{u \in I} \left( \partial z_{u} + \sum_{j=1}^{{q}{}} b_j^{u}\right) = \sum_{u \in I} \partial \left( z_{u} + \sum_{j=1}^{{q}{}} a_j^{u}\right) $$
$$ \Rightarrow \partial \left(z_{u} + \sum_{j=1}^{{q}{}} a_j^{u} \right) = 0 \qquad \forall u \in I, $$
the implication being true because the $B_{R'}(x_{u})$ are disjoint. The chain $ \overline z_{\gamma_j} = z_{\gamma_j} - \sum_{u \in I} a_j^{u}$ is a cycle (by \eqref{w8or7js}), and $ \overline {z_{u}} = z_{u} + \sum_{j=1}^{{q}{}} a_j^{u}$, for $u \in I$, is also a cycle by the equality above. By summing, we get
$$ \sum_{j=1}^{{q}{}} \overline {z}_{\gamma_j} + \sum_{u \in I} \overline{z}_u = \sum_{j=1}^{{q}{}} \left(z_{\gamma_j} - \sum_{u \in I} a_j^{u} \right) + \sum_{u \in I} \left(z_{u} + \sum_{j=1}^{{q}{}} a_j^{u}\right) = $$ $$ = \sum_{j=1}^{{q}{}} z_{\gamma_j} + \sum_{u \in I} z_{u} - \sum_{j=1}^{{q}{}} \sum_{u \in I} a_j^{u} + \sum_{u \in I} \sum_{j=1}^{{q}{}} a_j^{u} = \sum_{j=1}^{{q}{}} z_{\gamma_j} + \sum_{u \in I} z_{u} = z. $$

For all $u$, $\supp z_u \cup \bigcup_j \supp b_u^j \subseteq B_{R'}(x_u)$. By the local lemma, there is a constant $R'' = R''(R')$ such that $\bigcup_j \supp a_u^j \subseteq B_{R''}(x_u)$, hence $\supp \overline z_u \subseteq B_{R''}(x_u)$ too. Analogously, we have $\supp z_{\gamma_j} \cup \bigcup_u b_j^u \subseteq \mathscr N_{2\delta + L}(\gamma_j)$, hence also $\supp \overline z_{\gamma_j} \subseteq \supp z_{\gamma_j} \cup \bigcup_u a_j^u \subseteq \mathscr N_{S'}(\gamma_j)$, where we can put $S' = \max \left\{2\delta + L, \, R''\right\}$. 

We fill the $\overline z_u$ and the $\overline z_{\gamma_j}$ by $a_u$ and $a_{\gamma_j}$ as in the local lemma and Corollary \ref{hn0geijmovs} respectively, and put
$$ a := \sum_{u \in I} a_u + \sum_{j=1}^{{q}{}} a_{\gamma_j}. $$
By the local lemma again, the filling $a_u$ of $\overline z_u$ has support contained in some $B_{R'''}(x_u)$, where $R'''$ only depends on $R''$. Finally, by Lemma \ref{hn0geijmovs}, we get that $\supp a_{\gamma_j} \subseteq \mathscr{N}_{S''}(\gamma_j)$, for some $S''$ which only depends on $S'$. Hence Condition \eqref{primo} is easily verified, and we can put $L' = \max\{S'',\, R'''\}$.

In order to check the condition about the horoballs note that, if $z$ is contained in some $C$--horoball, then all the $z_{\gamma_j}$ and $z_{u}$ are contained in the same $C$--horoball. Hence, by (4) in the local lemma and (3) in Corollary \ref{hn0geijmovs}, the same is true for the $a_u$ and the $a_{\gamma_j}$. 

We are finally left to prove \eqref{secondo}. Let 
$$ K := \max \{M_\thin(S'', \maxh(z) + S''), \, M_\loc(2 R'', \maxh(z))\}, $$
where $M_\thin$ is the function of Corollary \ref{hn0geijmovs} and $M_\loc \: \N \times \N \to \R$ is the function of Point (3) of the local lemma. Then
$$ \|a\| \le \sum_{u \in I} \|a_u\| + \sum_{j=1}^{ {q}{}} \|a_{\gamma_j}\| \le K \left(\sum_{u \in I} \| \overline z_u\| + \sum_{j=1}^{ {q}{}} \| \overline z_{\gamma_j}\| \right), $$ 
$$ \sum_u \|\overline z_u\| \le \sum_u \| z_u\| + \sum_{uj} \|a_{j}^u\|, \qquad \sum_j \|\overline z_{\gamma_j}\| \le \sum_j \| z_{\gamma_j}\| + \sum_{uj} \| a_{j}^u \|. $$

By disjointness of the supports of the $z_u$ and the $\overline z_{\gamma_j}$ we get
$$ \sum_u \|z_u\| = \|\sum_u z_u\| \le \|z\| \qquad \sum_j \| \overline z_{\gamma_j}\| = \| \sum_j \overline z_{\gamma_j}\| \le \|z\|. $$
Now, from the construction of the $b_j^u$, we get $\maxh(b_j^u) \le \maxh (z)$. Since the $a_j^u$ fill the 
$b_j^u$ as in the local lemma, we get
$$ \sum_{uj} \| a_{j}^u \| \le M_{\loc}(R', \maxh(z)) \sum_{ju} \| b_{j}^u \|, $$
because the $b_j^u$ are contained in balls of radius $R'$.
\end{proof}

\section{\label{sezione 6} Proof of part \textrm{$(a)$} of Theorem \ref{main MIO theorem}}

The following homological lemma helps us to outline the strategy we intend to pursue in order to prove Theorem \ref{main MIO theorem} (a).

\begin{lem}[Homological lemma] \label{il lemma omologico o3827y} Let $(\Gamma, \Gamma')$ be a group-pair. Let $\St_*$ and $\St_*'$ be as in Definition \ref{standard resolutions}. The augmented complexes $\St_*^+ \:= \St_* \to \R \to 0$ and $\St_*'^+ \:= \St_* \to \R(\Gamma / \Gamma') \to 0$ are $\Gamma$--projective resolutions of $\R$ and $\R(\Gamma / \Gamma')$. In general, by a \emph{map between resolutions} of the same $\Gamma$--module $M$ we mean a chain $\Gamma$--map that extends the identity of $M$. Let $\varphi_i \: \St_* \to \St_*$, $i = 1, 2$ be chain $\Gamma$--maps which satisfy the following hypotheses: \begin{enumerate}
	\item $\varphi_i$ extends to a map between resolutions $\varphi_i^+ \: \St_*^+ \to \St_*^+$;
    \item $\varphi_i$ restricts to a map $\varphi_i' \: \St_*' \to \St_*'$;
    \item $\varphi_i'$ extends to a map between resolutions $\varphi_i'^+ \: \St_*'^+ \to \St_*'^+$.
\end{enumerate}
Then there is a $\Gamma$--equivariant homotopy $T$ between $\varphi_1^+$ and $\varphi_2^+$ that restricts to a homotopy between $\varphi_1'^+$ and $\varphi_2 '^+$ (in $\St_*'$). Given a $\Gamma$--module $V$, the dual maps $\varphi^1$ and $\varphi^2$ of $\varphi_1$ and $\varphi_2$ induce homotopically equivalent maps on the complex $\hom^\Gamma(\St_*/\St_*', V) =: \St^{\rel \, *}(\Gamma, \Gamma'; V)$, for every $\Gamma$--module $V$.
\end{lem}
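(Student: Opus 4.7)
The plan is to build the homotopy $T$ by a standard inductive construction using projectivity, but to take special care to ensure compatibility with the subcomplex $\St_*'$. First one checks that $\St_*^+$ and $\St_*'^+$ are genuine $\Gamma$--projective resolutions: the basis $\Gamma \times I$ of $\St_0$ carries a free $\Gamma$--action by left translation on the first coordinate, and the tuples defining the basis of each $\St_k'$ are permuted freely by $\Gamma$; hence Lemma \ref{Lemma 52 MY} gives $\Gamma$--projectivity of all modules involved, while exactness of each augmented complex is standard (use a contraction prepending a fixed vertex, applied one coset $g\Gamma_i$ at a time for $\St_*'^+$).

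The key structural observation is the following: for every $k \ge 1$, the canonical basis $S$ of $\St_k$ splits as a disjoint union $S = S' \sqcup S''$ of $\Gamma$--invariant subsets, where $S'$ is the basis of $\St_k'$ (the tuples satisfying the compatibility condition of Definition \ref{standard resolutions}) and $S''$ is its complement. Setting $U_k := \mathbb R S''$ we obtain a $\Gamma$--module splitting
\[ \St_k = \St_k' \oplus U_k, \]
and $U_k$ is itself $\Gamma$--projective by Lemma \ref{Lemma 52 MY}.

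Using these facts, the homotopy is constructed as follows. First apply Lemma \ref{q3onvp31t4y2} to the projective resolution $\St_*'^+$ to produce a $\Gamma$--homotopy $T' = \{T_k'\}_{k \ge 0}$ between $\varphi_1'^+$ and $\varphi_2'^+$. Assume inductively that $T_0, \ldots, T_{k-1}$ have been built with $T_j|_{\St_j'} = T_j'$ and $\partial T_j + T_{j-1} \partial = \varphi_1 - \varphi_2$ on $\St_j$ (with $T_{-1} := 0$). Set
\[ \psi_k := \varphi_1 - \varphi_2 - T_{k-1}\partial \: \St_k \to \St_k. \]
For $k \ge 1$ a direct boundary computation gives $\partial \psi_k = 0$, while for $k = 0$ the augmentation--preservation condition (1) gives that $\psi_0$ is killed by the augmentation; in either case $\psi_k$ lands in $Z_k = \partial \St_{k+1}$. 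The inductive hypothesis together with the fact that $T'$ is already a chain homotopy on $\St_*'$ ensures that $\psi_k|_{\St_k'} = \partial T_k'$. Now define $T_k$ on $\St_k'$ to be $T_k'$; on $U_k$, apply the $\Gamma$--projectivity of $U_k$ to the surjection $\partial \: \St_{k+1} \twoheadrightarrow Z_k$ and to the $\Gamma$--map $\psi_k|_{U_k} \: U_k \to Z_k$, obtaining a $\Gamma$--equivariant lift $T_k|_{U_k} \: U_k \to \St_{k+1}$. Gluing across the direct sum gives the desired $T_k$, completing the induction.

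Since $T$ restricts to $T'$ on $\St_*'$, it descends to a $\Gamma$--homotopy $\overline T \: \St_*^\rel \to \St_{*+1}^\rel$ between the induced maps $\overline\varphi_1$ and $\overline\varphi_2$; applying the functor $\hom^\Gamma(-,V)$ gives the claimed $\Gamma$--homotopy between $\varphi^1$ and $\varphi^2$ on $\St^{\rel\,*}(\Gamma, \Gamma'; V)$. The main technical obstacle is keeping the compatibility $T_k|_{\St_k'} = T_k'$ alive at every inductive step: a naive application of projectivity to all of $\St_k$ need not preserve the subresolution. What makes the argument work is precisely the $\Gamma$--invariant splitting $\St_k = \St_k' \oplus U_k$, which lets one freeze the homotopy on $\St_k'$ (using the previously built $T'$) and only use projectivity on the complementary summand $U_k$.
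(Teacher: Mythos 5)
Your proof is correct and is precisely the standard relative-homotopy argument that the paper intends but does not write out (the lemma is stated without proof, in the spirit of Lemma \ref{q3onvp31t4y2} being ``a simple exercise in homological algebra''). The key point you isolate --- the $\Gamma$--invariant splitting $\St_k = \St_k' \oplus U_k$ coming from the $\Gamma$--invariant partition of the canonical basis, which lets you freeze the homotopy on $\St_*'$ (built first via Lemma \ref{q3onvp31t4y2}) and invoke projectivity only on the complementary summand --- is exactly what makes the relative comparison theorem go through, and your inductive verification that $\psi_k$ lands in the boundaries and agrees with $\partial T_k'$ on $\St_k'$ is accurate.
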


We will apply the homological lemma to the diagram

\begin{equation} \label{0y45er}
\begin{array}{ccccccccccccccccccccccccccccccc}
\vdots  && \vdots && \vdots \\
\scriptstyle{\Big\downarrow}  && \scriptstyle{\Big\downarrow}  && \scriptstyle{\Big\downarrow} \\
\St_2(\Gamma) & \overset{\psi_2}{ \longrightarrow}  & C_2( \mathscr R_\kappa) & \overset{\varphi_2}{ \longrightarrow} & \St_2(\Gamma) \\
\scriptstyle{\Big\downarrow}  && \scriptstyle{\Big\downarrow}  && \scriptstyle{\Big\downarrow} \\
\St_1(\Gamma) & \overset{\psi_1}{ \longrightarrow}   & C_1(\mathscr R_\kappa) & \overset{\varphi_1}{ \longrightarrow} & \St_1(\Gamma) \\
\scriptstyle{\Big\downarrow}  && \scriptstyle{\Big\downarrow}  && \scriptstyle{\Big\downarrow} \\
\St_0(\Gamma) & \overset{\psi_0}{ \longrightarrow} & C_0(\mathscr R_\kappa) & \overset{\varphi_0}{ \longrightarrow} & \St_0(\Gamma) \\
\scriptstyle{\Big\downarrow}  && \scriptstyle{\Big\downarrow}  && \scriptstyle{\Big\downarrow} \\
\R & \overset{\id}{ \longrightarrow} & \R & \overset{\id}{ \longrightarrow} & \R \\ 
\scriptstyle{\Big\downarrow}  && \scriptstyle{\Big\downarrow}  && \scriptstyle{\Big\downarrow} \\
0 && 0 && 0, \\
\end{array}
\end{equation}
where $\mathscr R_\kappa = \mathscr R_\kappa(X)$ and $\kappa \ge 4 \delta + 6$ as in Corollary \ref{ohq3iulrejk}.

We wish to prove that the composition $\varphi_* \circ \psi_*$ satisfies the hypotheses of the homological lemma, and that $\psi^n \circ \varphi^n (f) = f \circ \varphi_n \circ \psi_n$ is a bounded cocycle for every $n \ge 2$ and for every cocycle $f \in \hom^\Gamma(\St_n; V)$. This will prove the surjectivity of the comparison map since, by Lemma \ref{il lemma omologico o3827y}, for any given cocycle $f$, the cocycle $f \circ \varphi_n \circ \psi_n$ is cobordant to $f$ and bounded.

In order to fulfill conditions (1), (2), (3) of the homological lemma it is sufficient to find $\Gamma$--equivariant chain maps $\varphi_*$ and $\psi_*$ such that $\psi_*$ maps simplices in $\St'$ into simplices in the corresponding $C$--horoballs of $ \mathscr R_\kappa$, and vice versa for $\psi_*$. 

We now define $\varphi_*$. If $i \ge 1$ we put $\varphi_0(g, i, n) := (g, i)$. Otherwise, we define $\varphi_0(g, 0) := \frac{1}{|I|} \sum_{i \in I} (g, i)$. For $i \ge 1$ and an $i$--dimensional simplex $[x_0, \ldots, x_i]$ of $ \mathscr R_\kappa(X)$ we put

\begin{equation} \label{3qiuhlefjwkm} \varphi_i([x_0, \ldots, x_i]) := \frac{1}{(1+i)!} \sum_{\pi \in S_{i+1}} \varepsilon(\pi) (\varphi_0(x_{\pi(0)}), \ldots, \varphi_0(x_{\pi(i)})), \end{equation}
where $S_{i+1}$ is the group of permutations of $\left\{0, \ldots, i\right\}$, and $ \varepsilon(\pi) = \pm 1$ is the sign of $\pi$. The apparently cumbersome definition of the map $\varphi_*$ follows from the fact that in $C_*( \mathscr R_\kappa)$ we have oriented simplices, whereas in $\St_*$ we have \emph{ordered} ones, and that the action of $\Gamma$ on $ \mathscr R_\kappa(X)$ may map a simplex to itself, changing the order of the vertices.

Much more effort will be needed for the definition of $\psi_*$, to which the rest of this section is dedicated. The fundamental tool that we will use is the bicombing defined in \cite{GM}.

\begin{dfn}\cite[Section 3]{Min1} \label{ } Given a group $\Gamma$ acting on a graph $G$ through simplicial automorphisms, a \textbf{homological bicombing} is a function
$$ q \: G^{(0)} \times G^{(0)} \to C_1(G) $$
such that $\partial q(a, b) = b - a$ for all $(a, b) \in G^{(0)} \times G^{(0)}$. We say that $q$ is \textbf{antisymmetric} if $q(a, b) = - q(b, a)$ for all $a, b \in G^{(0)}$, and \textbf{$\Gamma$--equivariant} if $\gamma q(a, b) = q(\gamma a, \gamma b)$ for all $\gamma \in \Gamma$ and $a, b \in G^{(0)}$. Moreover, $q$ is \textbf{quasi--geodesic} if there is a constant $D > 0$ such that, for all $a$, $b \in G^{(0)}$: \begin{enumerate}
	\item $\|q(a, b)\| \le D d(a, b)$;
	\item $\supp q(a, b) \subseteq \mathscr N_D([a, b])$.
\end{enumerate}
We note that, if $G$ is a hyperbolic graph, the precise choice of a geodesic $[a, b]$ between $a$ and $b$ is, up to increasing the constant $D$, irrelevant.
\end{dfn}

The homological bicombing $Q$ in the following theorem is based on the bicombing constructed by Mineyev in \cite{Min1}. The relevant properties of $Q$ are described in \cite[Section 5]{GM} and \cite[Theorem 6.10]{GM}.

\begin{theorem} \label{main theorem GM} If $(\Gamma, \Gamma')$ is a relatively hyperbolic pair, there is a bicombing $Q$ on the associated cusped space $X$ such that: \begin{enumerate}
	\item $Q$ is quasi-geodesic
	\item $Q$ is $\Gamma$--equivariant;
	\item $Q$ is antisymmetric;
	\item there is $K > 0$ such that, for all $a$, $b$, $c \in X^{(0)}$, there are $1$--cycles $z = z(a, b, c)$ and $w = w(a, b, c)$ such that
\begin{itemize}
	\item $Q(\partial (a, b, c)) = z + w$
	\item $\minh(w) \ge C > \delta$;
	\item $\|z\| \le K$;
	\item $\maxh (z) \le K$;
	\item for all $\gamma \in \Gamma$, $z (\gamma a, \gamma b, \gamma c) = \gamma z (a, b, c)$ and $w (\gamma a, \gamma b, \gamma c) = \gamma w (a, b, c)$;
	\item $z$ and $w$ are contained in the $K$--neighborhood of $[a, b] \cup [b, c] \cup [c, a]$.
\end{itemize}
\end{enumerate}
\end{theorem}

\begin{rem} \label{i3qrhiwul} Groves and Manning allow multiple edges in their definition of cusped graph (as already noted in Remark \ref{3iyewhilnj}). However, it is easy to see that, if $\overline X$ is the simplicial graph obtained by identifying edges of $X$ with the same endpoints, the obvious bicombing induced by $Q$ on $ \overline X$ satisfies all of the properties of Theorem \ref{main theorem GM}. See also \cite[Remark 6.12]{GM}
\end{rem}

We want to find a decomposition $\{\psi_k = z_k + w_k\}_{k \ge 2} \: \St_*(\Gamma) \to C_*( \mathscr R_\kappa)$ of $\psi_k$ into $\Gamma$--equivariant chain maps: $\{z_k\}_{k \ge 2}$ and $\{w_k\}_{k \ge 2}$ such that
\begin{enumerate}
	\item[$(A)$] $\|z_k(\Delta)\|$ is uniformly bounded independently of $\Delta \in \St_k$;
	\item[$(B)$] $\maxh(z_k(\Delta))$ is uniformly bounded independently of $\Delta \in \St_k$;
	\item[$(C)$] $\minh(w_k(\Delta)) \ge C$ for every $\Delta \in \St_k$; 
	\item[$(D)$] $z_*$ and $w_*$ map elements in the basis of $\St'$ into $C$--horoballs.
\end{enumerate}

We now show how the conclusion follows from the existence of a map $\psi_*$ satisfying the four conditions above, and then we construct such a $\psi_*$. It is easy to see that, if an $i$--dimensional simplex $s$ of $ \mathscr R_\kappa(X)$ is not contained in a single $C$--horoball, it must satisfy $\maxh (s) \le 2 \kappa +2$. For $i \ge 2$, let $f \: \St_i/\St_i' \to V$ be a $\Gamma$--equivariant map, that we see as a map defined on $\St_i$ which is null on $\St_i'$. Then $f \circ \varphi_i \: C_i( \mathscr R_\kappa(X)) \to V$ is a bounded map: in fact,
$$ \sup \{ \|f \circ \varphi_i (s)\| \: s \textrm{ is an $i$--dimensional simplex in }  \mathscr R_\kappa(X) \} = $$ $$ = \sup \{ \|f \circ \varphi_i (s)\| \: \maxh(s) \le 2 \kappa +2\} < \infty, $$
because, up to the $\Gamma$--action, there is only a finite number of simplices $s$ with $\maxh(s) \le 2 \kappa$. Moreover, $f \circ \varphi_i \circ \psi_i$ is also bounded since, for every simplex $\Delta \in \st_{(k)}$,
$$ \|f \circ \varphi_i \circ \psi_i(\Delta)\| = \|f \circ \varphi_i \circ z_i(\Delta)\|, $$
and $z_i$ is a bounded map.

We now construct $\psi_*$, inductively verifying that it satisfies conditions (A), \ldots, (D) above. Recall that, by our hypotheses, $X$ is a subcomplex of $ \mathscr R_\kappa(X)$. Let $Q$ be the bicombing of Theorem \ref{main theorem GM}. Since $Q$ is quasi-geodesic and $C$--horoballs are convex, it follows that $Q(a, b)$ is completely contained in a $C$--horoball $ \mathscr H_C$ if $a$ and $b$ lie in $ \mathscr H_L$, for $L$ sufficiently large. Therefore for such an $L$ we put
$$ \psi_0(g, i) := (g, i, L) $$ 
$$ \psi_1((g, i), (h, j)) := Q(\psi_0(g, i), \psi_0(h, j)) \in C_1(X) \subset C_1( \mathscr R_\kappa(X)). $$

In order to simplify our notation, we denote by $\Delta^{i}$ a generic $i$--dimensional simplex in $\St$. If $\Delta^2 = (p_0, p_1, p_2)$, we write 
$$ \psi_1( \partial \Delta^2) = z(\Delta^2) + w(\Delta^2), $$
where $z(\Delta^2) := z(\psi_0(p_0), \psi_0(p_1), \psi_0(p_2))$ as in the notation of Theorem \ref{main theorem GM}, and $w(\Delta^2) = \psi_1( \partial \Delta^2) - z(\Delta^2)$. 

Notice that the cycles $z(\Delta^2)$ fulfill the conditions of Theorem \ref{cicli in quasi grafi} for a uniform constant $L$ and with $\maxh (z(\Delta^2))$ uniformly bounded. Therefore we can fill $z(\Delta^2)$ with a chain $z_2(\Delta^2)$, where $\maxh(z_2(\Delta^2))$ and its norm $\|z_2(\Delta^2)\|$ are uniformly bounded (i.e. independently of $\Delta^2$), and moreover $\supp (z_2(\Delta^2))$ is contained in some $C$--horoball, if the same is true for $\supp (z(\Delta^2))$.
We extend $z$ and $z_2$ by linearity. In what follows, all fillings are required to satisfy the conditions of Theorem \ref{cicli in quasi grafi}. We have 
$$ z(\partial \Delta^3) + w(\partial \Delta^3) = 0 $$
hence $-z(\partial \Delta^3) = w(\partial \Delta^3)$ is a $1$--dimensional cycle with bounded norm and minimun height at least $C$. Hence $\supp(w( \partial \Delta^3))$ is contained in the union of some $C$--horoballs. Since the $C$--horoballs of $X$ are disjoint complexes and because of (4) of Lemma \ref{lemma locale}, we have that $w( \partial \Delta^3) {}_{\big| \mathscr H_C}$ is a cycle for every $C$--horoball $\mathscr H_C$.

Let $\omega_2(\Delta^3)$ be a filling of $w( \partial \Delta^3)$ as in Theorem \ref{cicli in quasi grafi}, i.e.
$$ \partial \omega_2(\Delta^3) = -z(\partial \Delta^3) = w(\partial \Delta^3). $$ 
Applying Theorem \ref{cicli in quasi grafi} we find a chain $z_3$ such that:
$$ \partial (z_3(\Delta^3)) = z_2(\partial(\Delta^3)) + \omega_2(\Delta^3). $$
Fix $i \ge 4$, and suppose by induction that, for any $i$--symplex in $St_i$
$$ \partial (z_i(\Delta^i)) = z_{i-1}(\partial \Delta^i) + \omega_{i-1}(\Delta^i), $$
with $z_i$, $z_{i-1}$ and $\omega_{i-1}$ of uniformly bounded maximum height and $\ell^1$--norm, and such that $\minh(\omega_{i-1}) \ge C$. Moreover, suppose that the geometric conditions of Theorem \ref{cicli in quasi grafi} for $z_i$, $z_{i-1}$ and $\omega_{i-1}$ are also satisfied, where $n$ and $L$ in the statement of Theorem \ref{cicli in quasi grafi} that only depends on the dimension $i$. Then
$$ \partial (z_i( \partial \Delta^{i+1})) = z_{i-1}(\partial (\partial \Delta^{i+1})) + \omega_{i-1}(\Delta^{i+1}) = \omega_{i-1}(\partial\Delta^ {i+1}), $$
hence we can find a filling $\omega_{i}(\Delta^{i+1})$ of the cycle $-\omega_{i-1}(\partial \Delta^{i+1})$. Finally, we define $z_{i+1}$ in such a way that
$$ \partial (z_{i+1}(p_0, \ldots, p_{i+1})) = z_{i}(\partial(p_0, \ldots, p_{i+1})) + \omega_{i}(p_0, \ldots, p_{i+1}). $$

All inductive conditions are satisfied. 

Now we consider the construction of $w_*$. Similarly as before, by Theorem \ref{main theorem GM}, $\minh(w(\Delta^2)) \ge C$. Hence $w(\Delta^2) {}_{\big| \mathscr H_C}$ is a cycle for every $C$--horoball $\mathscr H_C$. By the contractibility of the $C$--horoballs (Corollary \ref{ohq3iulrejk}), we can fill every $w(\Delta^2) {}_{\big| \mathscr H_C}$ in $ \mathscr H_C$. Let $w_2(\Delta^2)$ be a filling of $w(\Delta^2)$ given by filling any $w(\Delta^2) {}_{\big| \mathscr H_C}$ in the same $C$--horoball. Note that we have defined $\omega_*$ in such a way that $ \partial \omega_2(\Delta^{3}) = w(\partial \Delta^{3})$, and $\partial \omega_{i+1}(\Delta^{i+2}) = -\omega_i(\partial \Delta^{i+2})$ for $i \ge 2$. We have that
$$ \partial w_2( \partial \Delta^3) = w( \partial \Delta^3) = \partial \omega_2( \Delta^3). $$
Hence we can define $w_3(\Delta^3)$ in such a way that
$$ \partial w_3(\Delta^3) = w_2( \partial \Delta^3) - \omega_2(\Delta^3). $$
Now, fix $i \ge 4$, and suppose by induction that 
$$ \partial w_i(\Delta^i) = w_{i-1}( \partial \Delta^i) - \omega_{i-1}(\Delta^i). $$
Then $\partial w_i( \partial \Delta^{i+1}) = - \omega_{i-1}( \partial \Delta^{i+1}) = \partial \omega_i (\Delta^{i+1})$, hence $w_i( \partial \Delta^{i+1}) - \omega_i (\Delta^{i+1})$ is a cycle, which we can fill by $w_{i+1}(\Delta^{i+1})$.

This concludes the construction of $\psi_*$, whence the proof of Theorem \ref{main MIO theorem} (a).

\section{\label{sezione 7} Applications}

Let $(X, A)$ be a topological pair. Let $S_*(X)$ be the singular complex of $X$ with real coefficients. In other words, $S_k(X)$ is the real vector space whose basis is the set $C^0(\Delta^k, X)$ of singular $k$--dimensional simplices in $X$, and we take the usual boundary operator $ \partial _k \: S_k(X) \to S_{k-1}(X)$, for $k \ge 1$. The natural inclusion of complexes $S_*(A) \hookrightarrow S_*(X)$ allows us to define the relative singular complex $S_*(X, A) := S_*(X)/S_*(A)$. Dually, we define the relative singular cocomplex as
$$ S^*(X, A) = \hom(S_*(X, A), \R), $$
where $\hom(S_*(X, A), \R)$ denotes the set of real linear maps on $S_*(X, A)$. We put $S^*(X, \emptyset) =: S^*(X)$. We will often identify $S^*(X, A)$ with the subspace of $S^*(X)$ whose elements are null on $S_*(A)$.  We put an $\ell^1$--norm on $S_*(X, A)$ through the identification: 
$$S_*(X, A) \sim {\R(C^0(\Delta^i, X) \setminus C^0(\Delta^i, A))}.$$
Given a cochain $f \in S^*(X, A)$, the (possibly infinite) $\ell^\infty$--norm of $f$ is
$$ \|f\|_\infty := \sup\{|f(c)| \: c \in S_*(X, A),\, \|c\| \le 1\}. $$ 

We denote by $S_b^*(X, A)$ the subcocomplex of $S^*(X, A)$ whose elements have finite $\ell^\infty$--norm. Since the boundary operator $ \partial_* \: S_*(X, A) \to S_{*-1}(X, A) $ is bounded with respect to the $\ell^1$--norms, its dual maps bounded cochains into bounded cochains (and is bounded with respect to the $\ell^\infty$--norm). Therefore $S_b^*(X, A)$ is indeed a cocomplex.

The following definition appeared for the first time in \cite[Section 4.1]{VaBC}.

\begin{dfn} Given a topological pair $(X, A)$, the \textbf{relative bounded cohomology $H_b^*(X, A)$} is the cohomology of the cocomplex $S_b^*(X, A)$.
\end{dfn}

\begin{dfn} \label{ } Let $S_*(X, A)$ be the real singular chain complex of a topological pair. The norm on $S_*(X, A)$ descends to a natural semi-norm on homology, called \textbf{Gromov norm}: for every $\alpha \in H_*(X, A)$,
$$ \|\alpha\| = \inf \left\{\|c\| \: c \in S_*(X, A), \, [c] = \alpha  \right\}. $$
If $M$ is an $n$--dimensional oriented compact manifold with boundary, the \textbf{simplicial volume} of $M$ is the Gromov norm of the fundamental class in $H_n(M, \partial M)$.
\end{dfn}

\begin{dfn} A topological pair $(X, Y)$ is a \textbf{classifying space} for the group-pair $(\Gamma, \{\Gamma_i\}_{i \in I})$ if \begin{enumerate}
    \item $X$ is path-connected, and $Y = \bigsqcup_{i \in I} Y_i$ is a disjoint union of path-connected subspaces $Y_i$ of $X$ parametrized by $I$;
    \item there are basepoints $x \in X$ and $y_i \in Y_i$, and isomorphisms $\pi_1(X, x) \sim \Gamma$ and $\pi_1(Y_i, y_i) \sim \Gamma_i$;
    \item the $Y_i$ are $\pi_1$--injective in $X$, and there are paths $\gamma_i$ from $x$ to $y_i$ such that the induced injections
$$ \pi_1(Y_i, y_i) \hookrightarrow \pi_1(X, x) $$ 
correspond to the inclusions $\Gamma_i \hookrightarrow \Gamma$ under the isomorphisms above;
	\item $X$ and $Y$ are aspherical.
\end{enumerate}
\end{dfn}

The following theorem applies in particular to negatively curved compact manifolds with totally geodesic boundary.

\begin{theorem} \label{oq3yhilruedj} Let $(X, Y)$ be a classifying space of a relatively hyperbolic pair $(\Gamma, \Gamma')$. Then the Gromov norm on $H_k(X, Y)$ is a norm for any $k \ge 2$.
\end{theorem}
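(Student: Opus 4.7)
The plan is to reduce the statement to the surjectivity of the comparison map for the group-pair via the classifying pair, and then to apply the standard duality between $\ell^1$-homology and bounded cohomology.

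First, I would invoke the identification of the (bounded) cohomology of the topological pair $(X,Y)$ with that of the group-pair $(\Gamma,\Gamma')$. In the unbounded case this is classical once $(X,Y)$ is aspherical (along with each $Y_i$), giving an isomorphism $H^*(X,Y;\R) \cong H^*(\Gamma,\Gamma';\R)$. In the bounded case the analogous identification $H_b^*(X,Y;\R) \cong H_b^*(\Gamma,\Gamma';\R)$ is the relative version of Gromov's mapping theorem; it is available in \cite{MY} and, in the isometric form, in Blank's thesis \cite{Blank} (which by Proposition \ref{are naturally isometric} of Addendum \ref{addendum ii} agrees with the Mineyev--Yaman definition used here). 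These isomorphisms are compatible with the comparison maps, so we get a commutative square whose bottom row is the comparison map $H_b^k(\Gamma,\Gamma';\R) \to H^k(\Gamma,\Gamma';\R)$.

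Second, by part (a) of Theorem \ref{main MIO theorem} applied to the trivial bounded $\Gamma$-module $\R$, this bottom comparison map is surjective for every $k \ge 2$. Transporting this surjectivity along the identifications above, the comparison map
$$ c^k \: H_b^k(X,Y;\R) \to H^k(X,Y;\R) $$
is surjective for all $k \ge 2$.

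Third, I run the standard duality argument. Fix $k \ge 2$ and $0 \ne \alpha \in H_k(X,Y;\R)$. By the universal coefficient theorem over $\R$ (or just the Hahn--Banach type separation for the finite-dimensional pairing between $H_k$ and $H^k$), there exists $\phi \in H^k(X,Y;\R)$ with $\langle \phi, \alpha\rangle \ne 0$. By surjectivity of $c^k$, pick a bounded representative: a bounded cocycle $f \in S_b^k(X,Y;\R)$ with $[f] \in H_b^k(X,Y;\R)$ mapping to $\phi$. Then for every relative singular cycle $c \in S_k(X,Y)$ with $[c]=\alpha$ we have $f(c)=\langle \phi,\alpha\rangle$, and hence
$$ |\langle \phi,\alpha\rangle| \;=\; |f(c)| \;\le\; \|f\|_\infty \, \|c\|_1. $$
Taking the infimum over all such representatives yields $\|\alpha\| \ge |\langle \phi,\alpha\rangle|/\|f\|_\infty > 0$, so the Gromov seminorm is in fact a norm.

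The only non-routine step is the bounded mapping theorem in the relative setting, which is why I would lean on the cited identifications rather than reprove them. Everything else is bookkeeping, and the hypothesis $k \ge 2$ is exactly where Theorem \ref{main MIO theorem}(a) is used; in degree one surjectivity of the comparison map can fail even for hyperbolic pairs, which is consistent with the fact that the Gromov seminorm on $H_1$ is typically zero.
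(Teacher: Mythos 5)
Your proposal is correct and follows essentially the same route as the paper: identify $H_b^*(X,Y)$ and $H^*(X,Y)$ with the group-pair (bounded) cohomologies compatibly with the comparison maps, invoke Theorem \ref{main MIO theorem}(a) for the trivial module $\R$, and conclude by the standard duality between the Gromov seminorm and the $\ell^\infty$-norm on bounded cohomology. The only cosmetic difference is that you run the duality estimate by hand, whereas the paper cites it as \cite[Proposition 54]{MY}.
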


\begin{proof} Let $H^*(\Gamma, \Gamma')$ be the relative cohomology of $(\Gamma, \Gamma')$ as defined in \cite{BE} (the definition of Bieri and Eckmann is completely analogous to the one of Mineyev and Yaman, but without any reference on the norm). It is possible to define natural maps
\begin{equation} \label{3oqregwlj} H_b^*(X, Y) \to H_b^*(\Gamma, \Gamma') \to H^*(\Gamma, \Gamma') \to H^*(X, Y) \end{equation}
such that the first map is an isometric isomorphism, the third one is an isomorphism, and the compositions of all maps in \eqref{3oqregwlj} is the comparison map from singular bounded cohomology to singular cohomology (the fact that the first map is an isometry also follows from weaker hypotheses: see \cite[Theorem 5.3.11]{Blank}).  By hypothesis, the second map in \eqref{3oqregwlj} is surjective. Hence the conclusion follows from the following proposition (\cite[Proposition 54]{MY}), which is the relative version of an observation by Gromov (\cite[p. 17]{VaBC}) and could be generalized for any normed chain complex (see \cite[Theorem 3.8]{LohPhD}).

\begin{prop} \label{ } For any $z \in H_k(Y, Y'; \R)$, 
$$ \|z\| = \sup \left( \Big\{ \frac{1}{\|\beta\|_\infty} \: \beta \in H_b^k(Y,Y'; \R) \: \left\langle \beta, z \right\rangle = 1 \Big\} \cup \{0\} \right). $$
\end{prop}
\end{proof}

Now we consider our second application: a relatively hyperbolic group-pair has finite cohomological dimension. More precisely

\begin{theorem} \label{31fvqrwac} Let $(\Gamma, \Gamma')$ be a relatively hyperbolic pair. Then there is $n \in \N$ such that, for every $m > n$ and every bounded $\Gamma$--module $V$, $H^{m}(\Gamma, \Gamma'; V) = 0$.
\end{theorem}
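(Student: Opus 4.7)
The plan is to produce a $\Gamma$--projective resolution of $\Delta := \ker(\R(\Gamma/\Gamma') \to \R)$ of finite length from the Rips complex on the cusped graph, and then to invoke Lemma~\ref{q3onvp31t4y2} to conclude vanishing of $H^m(\Gamma, \Gamma'; V)$ above that length. The idea is that the $C$--horoballs, being contractible, can be ``collapsed away'' at the cost of passing to a relative chain complex; the resulting complex turns out to have bounded dimension even though the Rips complex itself is infinite--dimensional inside the horoballs.

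Fix $\kappa \ge 4\delta + 6$, set $\mathscr{R} := \mathscr{R}_\kappa(X)$, and let $\mathscr{H} \subset \mathscr{R}$ denote the union of the $C$--horoballs. By Corollary~\ref{ohq3iulrejk}, both $\mathscr{R}$ and each connected component of $\mathscr{H}$ (indexed by the cosets comprising $\Gamma/\Gamma'$) are contractible. The long exact sequence of the pair $(\mathscr{R}, \mathscr{H})$, together with $H_0(\mathscr{H}) = \R(\Gamma/\Gamma')$ and $H_0(\mathscr{R}) = \R$, then identifies $H_1(\mathscr{R}, \mathscr{H}) = \Delta$ and $H_k(\mathscr{R}, \mathscr{H}) = 0$ for $k \ne 1$. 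Since $\Gamma$ acts with trivial stabilizers on the vertex set of $X$, it acts freely on the simplices of $\mathscr{R}$, and each relative chain module $C_k(\mathscr{R}, \mathscr{H}) := C_k(\mathscr{R})/C_k(\mathscr{H})$ is a free, hence $\Gamma$--projective, module.

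The main geometric step will be bounding the dimension of $C_*(\mathscr{R}, \mathscr{H})$. A simplex $\sigma$ surviving in the quotient is not contained in any single $C$--horoball, so either $\sigma$ contains a vertex of height $<C$, or $\sigma$ contains two vertices $v, w$ lying in distinct horoballs. In the first case all vertices of $\sigma$ lie within $\kappa$ of a vertex of height $<C$, hence have height $< C+\kappa$. In the second case, since any path in $X$ between distinct horoballs must exit one horoball through its height--$0$ stratum and enter the other likewise, one obtains $d_X(v,w) \ge \height(v) + \height(w)$, forcing $\height(v), \height(w) \le \kappa$, and hence $\height(u) \le 2\kappa$ for every vertex $u$ of $\sigma$. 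In either case $\sigma$ is contained in a $\kappa$--ball around a vertex of height bounded by a constant $H := H(C,\kappa)$; local finiteness of $X$, combined with the finiteness of the number of $\Gamma$--orbits of vertices of bounded height, yields a uniform cardinality bound $|\sigma| \le D+1$, so $C_k(\mathscr{R}, \mathscr{H}) = 0$ for $k > D$.

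From this, extracting a projective resolution is routine. Because $H_0(\mathscr{R}, \mathscr{H}) = 0$, the boundary $\partial_1 \: C_1 \to C_0$ is surjective, so the short exact sequence $0 \to Z_1 \to C_1 \to C_0 \to 0$ splits (since $C_0$ is free), making $Z_1 := \ker \partial_1$ a direct summand of a free module, hence projective. Setting $P_0 := Z_1$ with augmentation $Z_1 \twoheadrightarrow Z_1/B_1 = \Delta$ and $P_k := C_{k+1}(\mathscr{R}, \mathscr{H})$ for $1 \le k \le D-1$ yields an exact complex of projective $\Gamma$--modules
\[
0 \to P_{D-1} \to \cdots \to P_1 \to P_0 \to \Delta \to 0.
\]
Lemma~\ref{q3onvp31t4y2} then gives $H^m(\Gamma, \Gamma'; V) = H^m(\hom^\Gamma(P_*, V)) = 0$ for every $m \ge D$ and every $\Gamma$--module $V$, establishing the theorem with $n := D-1$. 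The one genuinely non--routine input is the dimension bound of the preceding paragraph; everything else is standard homological algebra.
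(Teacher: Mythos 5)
Your argument is correct, but it takes a genuinely different route from the paper. The paper reuses the whole apparatus of Section \ref{sezione 6}: it observes (Lemma \ref{3qerqre}) that every simplex of $\mathscr R_\kappa(X)$ of sufficiently large dimension lies deep inside a single horoball, so that for large $k$ the composition $f \circ \varphi_k \circ \psi_k$ vanishes for every relative cocycle $f$, and then invokes the homological lemma to conclude that $f$ is a coboundary; this requires the maps $\psi_*$, hence the bicombing of Theorem \ref{main theorem GM} and the filling inequalities. You instead build a finite-length $\Gamma$--projective resolution of $\Delta$ out of the relative chain complex $C_*(\mathscr R_\kappa, \mathscr H)$, using only Corollary \ref{ohq3iulrejk} (contractibility of $\mathscr R_\kappa$ and of the $C$--horoballs), the long exact sequence of the pair, and the same height/dimension estimate that underlies Lemma \ref{3qerqre}; uniqueness of projective resolutions (Lemma \ref{q3onvp31t4y2}) then gives the vanishing. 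This is more self-contained for this particular theorem, since it bypasses the bicombing entirely, and it yields the slightly stronger statement that $(\Gamma,\Gamma')$ admits a finite-length projective resolution of $\Delta$ (finite relative cohomological dimension in the strong sense). Two small corrections: the $\Gamma$--action on the simplices of $\mathscr R_\kappa$ is \emph{not} free --- an element can stabilize a simplex by permuting its vertices --- so $C_k(\mathscr R_\kappa,\mathscr H)$ need not be free; but stabilizers of simplices are finite (they inject into the symmetric group on the vertices, as vertex stabilizers are trivial), so Lemma \ref{Lemma 52 MY} still gives projectivity, which is all you use. Also, since $\St_0^{\rel}=0$, the resolution $\St_*^{\rel}\to\Delta$ carried by the paper starts in degree $1$, so your identification of $H^m(\Gamma,\Gamma';V)$ with $H^m(\hom^\Gamma(P_*,V))$ is off by one in the indexing; this shifts the value of $n$ by one but does not affect the conclusion.
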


We note that this theorem admits a straightforward proof in the case of a torsion-free hyperbolic group $\Gamma$. Indeed, consider a contractible Rips complex $Y$ over the Cayley-graph $X$ of $\Gamma$. The complex $Y$ is finite dimensional by the uniform local compactness of $X$. Since $Y$ is contractible and $\Gamma$ acts freely on it, the cohomology of $\Gamma$ is isomorphic to the (simplicial) cohomology of $Y$, whence the conclusion.

Let $\mathscr R_\kappa := \mathscr R_\kappa(X)$ be the Rips complex associated to a cusped space $X$ of the relatively hyperbolic pair $(\Gamma, \Gamma')$, as described in Corollary \ref{ohq3iulrejk}. Then

\begin{lem} \label{3qerqre} For every $C > 0$ there exists $n \in \N$ such that, for every $m \ge n$ and for every $m$--simplex $\Delta$ of $ \mathscr R_\kappa(X)$, we have $\minh (\Delta) > C$.
\end{lem}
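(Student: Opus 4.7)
The plan is to observe that any $m$--simplex $\Delta$ of $\mathscr{R}_\kappa(X)$ consists of $m+1$ distinct vertices which are pairwise at $d_X$--distance at most $\kappa$, and then to show that $d_X$--balls of radius $\kappa$ centered at vertices of bounded height in $X$ have uniformly bounded cardinality. This will immediately force $m+1$ to be bounded whenever $\minh(\Delta)\le C$.

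More precisely, suppose $\Delta$ is an $m$--simplex with $\minh(\Delta)\le C$, and let $v_L$ be a vertex of $\Delta$ realizing the minimum height, so $\height(v_L)\le C$. Every other vertex $w$ of $\Delta$ satisfies $d_X(v_L,w)\le \kappa$ by the definition of Rips complex, hence the vertex set of $\Delta$ is contained in the $X$--ball $B_\kappa^X(v_L)$. I will show that there is a constant $N=N(C,\kappa)$ such that $|B_\kappa^X(v)|\le N$ for every vertex $v$ of $X$ with $\height(v)\le C$; taking $n:=N$ will then prevent any simplex of dimension $m\ge n$ from fitting into such a ball, giving $\minh(\Delta)>C$.

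To bound $|B_\kappa^X(v_L)|$, use $\Gamma$--equivariance of the cusped graph to reduce, via translation, to the case $v_L=(1,i,h)$ with $i\in I$ and $0\le h\le C$; there are only $|I|(C+1)$ such base-vertices. Every vertex in $B_\kappa^X(v_L)$ has height at most $C+\kappa$. By the definition of the cusped graph, a vertex $(g,j,n)$ has neighbors of only three kinds: the two vertical neighbors $(g,j,n\pm 1)$ (when they exist); the horoball neighbors $(g',j,n)$ at the same level with $d_{\Gamma_j}(g,g')\le 2^n$, of which there are finitely many since $\Gamma_j$ is finitely generated; and, if $n=0$, the Cayley neighbors $(gs,j,0)$ for $s\in S$, of which there are finitely many since $S$ is finite. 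Consequently, the degree of any vertex of height $\le C+\kappa$ is bounded by a constant depending only on $C+\kappa$ (and the fixed data $S$, $\Gamma'$). Iterating this degree bound $\kappa$ times yields the desired uniform finite bound $N$ on $|B_\kappa^X(v_L)|$.

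There is no real obstacle here: the statement is a local finiteness assertion about $X$ near its thick part, and everything is handled by the definitions. The only point deserving a bit of care is that the degree grows with the height, so the bound must be stated in terms of $C+\kappa$ rather than $C$ alone; but since $\kappa$ is fixed, this is harmless.
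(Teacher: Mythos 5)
Your proof is correct and follows essentially the same route as the paper's: the paper's one\-/line justification (``up to the $\Gamma$--action there are only finitely many such sets $A$'' of diameter at most $\kappa$ with $\minh \le C$) is exactly the local finiteness of $X$ near the thick part that you verify explicitly via the degree bound on vertices of height at most $C+\kappa$. Your write\-/up just makes that implicit step explicit.
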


\begin{proof} For $m$ sufficiently large, every subset $A \subseteq \mathscr R_\kappa(X)^{(0)}$ of cardinality $m$ and such that $\minh (A) \le C$ has $X$--diameter greater than $\kappa$. This follows easily from the fact that, up to $\Gamma$-action, there are only finitely many such sets $A$. Therefore, by definition of Rips complex, the conclusion follows.
\end{proof}

We can now prove Theorem \ref{31fvqrwac}.

\begin{proof} Let $V$ be a bounded $\Gamma$--module, and let $f$ be a cochain in $\hom^\Gamma(\St_k^{\rel}, V)$, which we can see as a $\Gamma$--equivariant map which is null on $\St_k' \subset \St_k$. By Lemma \ref{3qerqre}, $f \circ \varphi_k \circ \psi_k = 0$, for $k$ sufficiently big and independent of $f$. If $f$ is a cocycle, by Lemma \ref{il lemma omologico o3827y}, $f$ is cohomologous to the null map, hence $H^k(\Gamma, \Gamma'; V) = 0$ by the arbitrariness of $f$.
\end{proof}

\section{\label{sezione 8} Proof of part \textrm{$(b)$} of Theorem \ref{main MIO theorem}}

In the following we will work in the category of \emph{combinatorial cell complexes} (see \cite[8A.1]{BriHae}). We are particularly interested in the $2$--skeleton of a combinatorial complex $X$. This is described as follows: $X^{(1)}$ is any graph, and the $2$--cells are $l$--polygons $e_\lambda$, $l\ge2$, such that the attaching map $ \partial e_\lambda \to X^{(1)}$ is a loop whose restriction to any open cell of $\partial e_\lambda$ (i.e.: open edge or point) is a homeomorphism to some open cell of $X^{(1)}$.

The following characterization of relative hyperbolicity was proved by Bowditch in \cite[Definition 2]{BOW}.

\begin{dfn} \label{qo3hiulendjsl} Let $G$ be a graph. A circuit in $G$ is a closed path that meets any vertex at most once. We say that $G$ is \textbf{fine} if, for any edge, the set of circuits which contain $e$ is finite. A group $\Gamma$ is \textbf{hyperbolic relative to a finite collection of subgroups $\Gamma'$} if $\Gamma$ acts on a connected, fine, $\delta$--hyperbolic graph $G$ with finite edge stabilizers, finitely many orbits of edges, and $\Gamma'$ is a set of representatives of distinct conjugacy classes of vertex stabilizers (such that each infinite stabilizer is represented).
\end{dfn}

\begin{dfn} \cite[Definition 1.2]{pedro}\label{ } Let $\K \in \{\Z, \Q, \R\}$ and let $X$ be a combinatorial cell complex. \textbf{The homological Dehn function of $X$ over} $\K$ is the map $FV_{X, \K} \: \N \to \R$ defined by
$$ FV_{X, \K}(k) := \sup\{\|\gamma\|_{f, \K} \: \gamma \in Z_1(X, \Z),\, \|\gamma\| \le k\} $$
where: 
$$ \|\gamma\|_{f, \K} := \inf \{\|\mu\| \: \mu \in C_2(X, \K), \partial \mu = \gamma\}. $$
\end{dfn}

By a result given in \cite{Min2} (which generalizes \cite[Theorem 3.3]{AG}) the linearity of $FV_{X, \K}$ is equivalent to the undistortedness of the boundary $\partial _2 \: C_2(X, \K) \to C_1(X, \K)$, if $\K \in \left\{\Q, \R\right\}$. Indeed, given a cycle $z \in Z_1(X, \K)$, we can express it as a sum of circuits $z = \sum_{c} a_c c$ in such a way that $\K \ni a_c \ge 0$ for all $c$, and $\|z\| = \sum_c a_c \|c\|$ (see \cite[Theorem 6 (b)]{Min2}, with $T = \emptyset$). Suppose that $\|c\|_{f, \K} \le K\|c\|$ for some constant $K \ge 0$ and any circuit $c$. Then
$$ \|z\|_{f, \K} \le \sum_{c} g(c) \|c\|_{f, \K} \le K \sum_{c} g(c) \|c\| = K \|z\|. $$ 
Moreover, we have

\begin{prop} \label{o847ywteiuv} Let $X$ be a simply connected combinatorial cell complex. Then 
$$ FV_{X, \Q} = FV_{X, \R}. $$
\end{prop}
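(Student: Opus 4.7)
The plan is to prove that $\|\gamma\|_{f,\Q} = \|\gamma\|_{f,\R}$ for \emph{every} integer cycle $\gamma \in Z_1(X,\Z)$. Since both Dehn functions are defined as suprema over the same set of integer cycles of $\ell^1$--norm at most $k$, the equality of the filling norms on each cycle immediately gives $FV_{X,\Q}(k) = FV_{X,\R}(k)$ for all $k$.

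The inequality $\|\gamma\|_{f,\R} \le \|\gamma\|_{f,\Q}$ is trivial because every $\mu \in C_2(X,\Q)$ is an element of $C_2(X,\R)$ of the same $\ell^1$--norm. For the opposite direction, fix $\varepsilon > 0$ and choose $\mu \in C_2(X,\R)$ with $\partial \mu = \gamma$ and $\|\mu\| < \|\gamma\|_{f,\R} + \varepsilon/2$. By definition of $C_2$, the chain $\mu$ has finite support; let $E \subseteq X^{(2)}$ be this support, and work inside the finite-dimensional space $V_E := \R^E$. The set
$$ A_E := \{x \in V_E \: \partial_2 x = \gamma\} $$
of real fillings of $\gamma$ supported on $E$ is a nonempty affine subspace of $V_E$ cut out by a linear system $M x = \gamma$ whose coefficient matrix $M$ has integer entries (coming from the cellular boundary map) and whose right-hand side $\gamma$ has integer entries (since $\gamma \in Z_1(X,\Z)$).

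The key observation is that rational points are dense in $A_E$. Indeed, a rational linear system that admits a real solution admits a rational solution by Gaussian elimination; fixing $x_0 \in A_E \cap \Q^E$, we may write $A_E = x_0 + \ker M$, where $\ker M \subset V_E$ is a rational subspace and hence has a $\Q$--basis, so $\ker M \cap \Q^E$ is dense in $\ker M$. Therefore there exists $\mu' \in A_E \cap \Q^E$ with $\|\mu' - \mu\| < \varepsilon/2$; in particular $\partial \mu' = \gamma$ and $\|\mu'\| < \|\gamma\|_{f,\R} + \varepsilon$. This yields $\|\gamma\|_{f,\Q} \le \|\gamma\|_{f,\R} + \varepsilon$, and letting $\varepsilon \to 0$ finishes the argument.

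There is no real obstacle here: the proposition is essentially a consequence of the density of rational solutions in the real solution set of a linear system with integer coefficients, combined with the finite-support condition built into the definition of cellular chains. The only thing one needs to take care of is that the $\ell^1$--norm used on $C_2(X,\K)$ agrees with the standard $\ell^1$--norm on the finite-dimensional coordinate space $\K^E$, which is immediate from the definition.
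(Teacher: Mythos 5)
Your proof is correct, and it establishes the same pointwise identity $\|\gamma\|_{f,\Q}=\|\gamma\|_{f,\R}$ for integral cycles that the paper's proof does, but by a more direct mechanism. The paper first replaces the real filling $a=\sum_i\lambda_i\sigma_i$ by a naive rational approximation $a'$, which no longer fills $\gamma$ exactly, and then corrects the small rational error $\partial(a-a')$ by applying a bounded $\Q$--linear section $\theta$ of $\partial_2$ defined on the finite-dimensional space of rational cycles supported on the faces of the $\sigma_i$; constructing $\theta$ is precisely where simple connectedness is used, since one needs those rational cycles to be rational boundaries. You instead note that the set of fillings of $\gamma$ supported on the finite set $E$ is the solution set of a linear system with integer coefficient matrix and integer right-hand side, and that rational points are dense in such a solution set as soon as a real solution exists. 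This buys two things: it dispenses with the approximate-then-correct step and the auxiliary map $\theta$ entirely, and it makes clear that simple connectedness plays no role in the equality itself --- the hypothesis only guarantees that fillings exist at all (if $\gamma$ admitted no real filling, both filling norms would be $+\infty$ and the equality would be vacuous). Both arguments ultimately exploit the same two facts, namely the finite support of cellular chains and the rationality of the cellular boundary map, so the difference is one of packaging rather than substance, but yours is the cleaner and slightly more general packaging.
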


\begin{proof} We have to prove that $FV_{X, \Q} \le FV_{X, \R}$, since the opposite inequality is clear. Let $\gamma \in Z_1(X, \Z)$, and let $a = \sum_i \lambda_i \sigma_i \in C_2(X, \R)$ be such that $ \partial a = \gamma$. We approximate the $\lambda_i$ with rational coefficients $\lambda_i'$, in such a way that, if $a' = \sum_i \lambda_i' \sigma_i$, then $\|\partial (a - a')\| \le \varepsilon$. Let $W$ be the normed subspace of $Z_1(X, \Q)$ whose elements are $\Q$--linear combinations of faces of the $\sigma_i$. Let $\theta \: W \to C_2(X, \Q)$ be a $\Q$--linear map such that $ \partial \theta (w) = w$ for all $w \in W$. Since $W$ is finite-dimensional, $\theta$ is bounded. Moreover, $\partial (a-a') \in W$. Hence
$$ \partial (a' + \theta \partial (a-a')) = \gamma $$ 
and
$$ \|a' + \theta \partial (a-a') \| \le \|a'\| + \|\theta\|_\infty \varepsilon $$
from which the conclusion follows immediately by the arbitrariness of $ \varepsilon$.
\end{proof}

The following lemma is stated as such in \cite{pedro}, but is proven in \cite[Theorem 2.30]{GM} with a different notation.

\begin{lem} \cite[Theorem 3.4]{pedro} \label{pedro 3.4} Let $X$ be a simply connected complex such that there is a bound on the length of attaching maps of $2$--cells. If $FV_{X,\Q}$ is bounded by a linear function, then the $1$--skeleton of $X$ is a hyperbolic graph.
\end{lem}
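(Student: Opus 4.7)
The plan is to derive hyperbolicity of $G := X^{(1)}$ by establishing the thin-triangles condition, adapting the classical argument that a linear isoperimetric inequality forces Gromov hyperbolicity to the homological setting. Let $M$ be the uniform bound on the length of attaching maps of $2$-cells, and let $K$ be a constant such that $FV_{X,\Q}(n) \le K n$. By Proposition \ref{o847ywteiuv} the same linear estimate holds for $FV_{X,\R}$, which will be crucial for manipulating fillings.

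First I would set up the reduction. Given three vertices of $G$ and a geodesic triangle $T$ with sides $\alpha_1, \alpha_2, \alpha_3$, the oriented concatenation defines a $1$-cycle $z \in Z_1(X, \Z)$ of norm equal to the perimeter $|T|$. By hypothesis there exists $\mu \in C_2(X, \R)$ with $\partial \mu = z$ and $\|\mu\|_{f,\R} \le K|T|$. Since each $2$-cell in $\supp \mu$ has attaching map of length at most $M$, its vertex set is contained in a ball of $G$-radius at most $M/2$. This is the geometric control that will let me compare $\mu$ with metric balls in $G$.

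Next I would argue by contradiction. Suppose $G$ is not Gromov hyperbolic. Then there is a sequence of geodesic triangles $T_n$ containing a point $p_n$ on one side at $G$-distance $\ge n$ from the union of the other two sides. I would restrict a filling $\mu_n$ of $\partial T_n$ to the $G$-ball $B_n(p_n)$: because each $2$-cell in $\supp \mu_n$ has diameter $\le M/2$, the restricted chain $\mu_n'$ is well-defined up to a controlled error, and its boundary splits as (a) a piece of $\alpha_1$ of length $\sim 2n$, and (b) an interior $1$-cycle $\zeta_n$ that must separate $p_n$ from the two far sides. Lower-bounding $\|\zeta_n\|$ in terms of $n$ and reapplying the linear filling inequality to $\zeta_n$ (which lives inside the restricted filling), one obtains that the area $\|\mu_n\|$ must grow at least like a function of $n$ that is not linear in $|T_n|$; this gives the contradiction. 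Equivalently, one can run the standard bisection/induction: a non-thin triangle is split into two triangles of comparable perimeter with strictly smaller defect, and iterating yields a triangle whose area beats $K$ times its perimeter.

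The main obstacle is the passage from a chain-theoretic filling with real (possibly negative, fractional) coefficients to something resembling a van Kampen disk diagram, since the intuitive area/thinness arguments are written in the language of integral disk diagrams. This is exactly what Proposition \ref{o847ywteiuv} and the bound $M$ on attaching maps are for: together they let me replace $\mu$ by a measured (signed) collection of $2$-cells of uniformly bounded diameter, after which slicing by $G$-balls and re-filling the interior boundary behaves as in the combinatorial case. Once this homological-to-geometric translation is in place, the argument is the same as in \cite[Section 3.3]{Min2} and \cite[Theorem 2.30]{GM}, yielding a uniform $\delta$ such that every geodesic triangle in $G$ is $\delta$-thin, hence $G$ is hyperbolic.
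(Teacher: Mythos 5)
A point of context first: the paper does not prove this lemma at all --- it is imported from \cite[Theorem 3.4]{pedro}, with the remark that the actual proof is \cite[Theorem 2.30]{GM}. So your proposal is being measured against the standard argument in those references rather than against anything in this paper.

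Your overall strategy (slice a real filling by metric annuli, use the bound $M$ on attaching maps to control the boundary of each slice, and invoke Proposition \ref{o847ywteiuv} to pass between $\Q$- and $\R$-fillings) is the right one, and you correctly identify the real-coefficients issue as the point where disk-diagram intuition must be replaced by chain-level estimates. But the contradiction step does not close as written, for two reasons. First, the ``interior'' piece $\zeta_n$ of $\partial\bigl(\mu_n|_{B_n(p_n)}\bigr)$ is not a cycle: its boundary is $-(x^+ - x^-)$, where $x^{\pm}$ are the endpoints of the sub-arc of $\alpha_1$, so ``reapplying the linear filling inequality to $\zeta_n$'' is not meaningful; the correct lower bound is $\|\zeta_n\|\ge d(x^+,x^-)\approx 2n$, obtained by pairing with the coboundary of the Lipschitz function $d(x^-,\cdot)$, not from a filling inequality. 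Second, and more seriously, even the sharpest version of this estimate --- summing the contributions $\ge (2r-O(M))/M$ of the pairwise disjoint annuli at radii $r\le D_n:=d(p_n,\alpha_2\cup\alpha_3)$ --- only yields $\|\mu_n\|\ge c\,D_n^2/M^2$, hence $D_n\le C\sqrt{|T_n|}$ after comparison with $\|\mu_n\|\le K|T_n|$. That is neither a contradiction nor uniform thinness. The genuinely hard step, which your sketch omits (and which the closing ``bisection'' sentence gestures at without detail), is to cut the non-thin triangle down to a loop of length $O(D_n)$ that still contains a point at depth comparable to $D_n$ --- i.e.\ to bound the perimeter of the offending configuration by its defect --- before applying the quadratic area lower bound. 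That perimeter control is where essentially all of the work in \cite[Theorem 2.30]{GM} and in the classical argument of \cite[III.H.2.9]{BriHae} is concentrated, and without it the linear isoperimetric hypothesis is never contradicted.
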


The following theorem is a slight modification of the ``if part'' of \cite[Theorem 1.8]{pedro}: we require the complex to be simply connected instead of $1$--acyclic, and we write $FV_{X, \Q}$ instead of $FV_{X, \Z}$ in (2).

\begin{theorem} \label{oq3847hfiuwel} Let $(\Gamma, \Gamma')$ be a group-pair. Then $(\Gamma, \Gamma')$ is hyperbolic if there is a simply connected combinatorial complex $X$ such that \begin{enumerate}
    \item $\Gamma$ acts cocompactly on $X^{(2)}$;
    \item $FV_{X, \Q}(k) \le Ck$ for every $k \in \N$;
    \item the stabilizers in $\Gamma$ of edges are finite;
    \item $\Gamma'$ is a set of representatives of (distinct) conjugacy classes of stabilizers of $0$--cells such that each infinite stabilizer is represented. This means that there is an injection
$$ \Gamma' \to \{[\stab(v)] \: v \in X^{(0)} \} \qquad \Gamma_i \mapsto [\Gamma_i] $$
(where $[H]$ denotes the conjugacy class of a subgroup $H$ of $\Gamma$) whose image contains all conjugacy classes of infinite stabilizers in $\Gamma$ of vertices in $X^{(0)}$.
\end{enumerate}
\end{theorem}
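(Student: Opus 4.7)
The plan is to verify Bowditch's characterization of relative hyperbolicity (Definition \ref{qo3hiulendjsl}) for the $1$-skeleton $G := X^{(1)}$ of the given complex $X$. We need $G$ to be connected, fine, $\delta$-hyperbolic, with finite edge stabilizers, finitely many orbits of edges, and with $\Gamma'$ as a set of representatives of conjugacy classes of vertex stabilizers (each infinite one represented). Hypotheses (3) and (4) directly give the conditions on edge stabilizers and vertex stabilizers. Cocompactness of the action on $X^{(2)}$ yields finitely many orbits of edges, and simple connectedness of $X$ implies connectedness of $G$. So the two non-trivial tasks are to establish $\delta$-hyperbolicity and fineness.

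For hyperbolicity, I would first observe that cocompactness of the $\Gamma$-action on $X^{(2)}$ implies that, up to translation, there are only finitely many $2$-cells, so their attaching maps have bounded length. By hypothesis (2), $FV_{X, \Q}$ is bounded by a linear function. Lemma \ref{pedro 3.4} then directly applies and yields that $G$ is a $\delta$-hyperbolic graph for some $\delta > 0$. (Proposition \ref{o847ywteiuv} is not strictly needed here, though it would show that the same linear bound holds for $FV_{X, \R}$, which is convenient if one wishes to manipulate real fillings.)

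The main obstacle is fineness: for each edge $e$ of $G$, the set of circuits containing $e$ must be finite. Here the strategy is to exploit the linear homological Dehn function together with the cocompact action. The idea is that any circuit $c$ through $e$, when regarded as an element of $Z_1(X, \Z)$, admits a filling $\mu \in C_2(X, \R)$ with $\|\mu\| \le C\|c\|$ (using Proposition \ref{o847ywteiuv} to upgrade from $\Q$ to $\R$). Since attaching maps of $2$-cells have uniformly bounded length and stabilizers of edges are finite, one can bound the combinatorial ``complexity'' of such a filling in terms of $\|c\|$. Combined with cocompactness of the $\Gamma$-action on $X^{(2)}$, this should produce, for each length bound $L$, only finitely many orbits of circuits of length $\le L$ through the edge $e$; the finiteness of edge stabilizers then yields finitely many actual circuits of bounded length through $e$. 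Since circuits containing a given edge must have length bounded by the perimeter of a minimal filling disk (a standard consequence of the linear isoperimetric inequality combined with hyperbolicity of $G$), this forces only finitely many circuits in total to pass through $e$.

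Having verified all five items in Bowditch's definition, we conclude that $(\Gamma, \Gamma')$ is relatively hyperbolic. The delicate step is the fineness argument, where the linearity of $FV_{X, \Q}$ together with cocompactness must be leveraged to control the combinatorics of circuits; this mirrors the analogous arguments in \cite{pedro} and \cite{GM}, and is where one must be most careful to track how the hypotheses feed into Bowditch's finiteness condition.
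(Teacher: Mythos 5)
Your overall route is the same as the paper's: verify Bowditch's characterization for $G = X^{(1)}$, obtaining hyperbolicity from conditions (1)--(2) via Lemma \ref{pedro 3.4} and reading the stabilizer conditions off (3)--(4). The problem is the fineness step, which is where the actual content of the theorem sits, and your sketch of it contains two genuine gaps. First, cocompactness of the $\Gamma$--action on $X^{(2)}$ does \emph{not} give finitely many $\Gamma$--orbits of circuits of length at most $L$ through $e$: the graph $G$ need not be locally finite (vertex stabilizers may be infinite, so a vertex may have infinite valence), and already the paths of length $2$ through such a vertex fall into infinitely many orbits. Second, the claim that circuits containing a given edge have length bounded by the perimeter of a minimal filling disk, presented as ``a standard consequence of the linear isoperimetric inequality combined with hyperbolicity,'' is false: a hyperbolic graph with a linear isoperimetric function can contain arbitrarily long embedded circuits through a fixed edge (e.g.\ the Cayley graph of a hyperbolic surface group). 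Fineness does not follow from hyperbolicity plus a linear Dehn function alone; it requires the homological linear filling over $\Q$ together with local finiteness of the $2$--cells along each edge.

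The paper closes exactly this gap by invoking a nontrivial result of Pedroza, \cite[Theorem 1.6 (2)]{pedro}, restated as Lemma \ref{owhreflm}: if $X$ is simply connected, each $1$--cell lies in only finitely many $2$--cells, the attaching maps of $2$--cells have bounded length, and $FV_{X,\Q}$ is linearly bounded, then $X^{(1)}$ is fine. The hypotheses of that lemma are precisely what conditions (1) and (3) supply: cocompactness bounds the length of attaching maps, and cocompactness together with finiteness of edge stabilizers shows that each edge is adjacent to only finitely many $2$--cells (the edge stabilizer acts cocompactly on the adjacent $2$--cells). If you want to avoid the citation you would have to reproduce Pedroza's argument, which is more delicate than your sketch; as written, your fineness argument does not go through.
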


\begin{proof} Points (1) and (2) imply the hyperbolicity of the graph by Lemma \ref{pedro 3.4}. Hence, in order to apply Bowditch's characterization of relative hyperbolicity 
 it remains to prove that $X^{(1)}$ is fine.

Condition (1) in the statement implies that there is a bound on the number of edges on the boundary of $2$--cells. Moreover, conditions (1) and (3) imply that any edge belongs to just a finite number of $2$--cells (because edge-stabilizers act cocompactly on the $2$--cells adjacent to the edge). 

We conclude by mean of the following lemma, which is proven in \cite[Theorem 1.6 (2)]{pedro}.

\begin{lem} \label{owhreflm} Let $X$ be a simply connected combinatorial cell complex such that each $1$--cell is adjacent to finitely many $2$--cells and there is a bound on the length of attaching maps of $2$--cells. Suppose that there is $C \ge 0$ such that 
$$ FV_{X, \Q}(k) \le Ck  \qquad \forall k \in \N. $$
Then $X^{(1)}$ is fine.
\end{lem}
\end{proof}

\begin{dfn} \label{q3o47bfihuadjks} \cite[Definition 2.1]{Osin} \label{ } Let $(\Gamma, \Gamma')$ be a group-pair. We say that $\Gamma$ is \textbf{finitely presented relative to $\Gamma'$} if: \begin{enumerate}
    \item $\Gamma$ is generated by $\bigcup_{i \in I} \Gamma_i$ and a finite subset $ \mathscr A$ of $\Gamma$;
    \item the kernel of the natural projection
    $$ F( \mathscr A) * (*_{i \in I} \Gamma_i) \to \Gamma $$
is generated -- as a normal subgroup of $F( \mathscr A) * (*_{i \in I} \Gamma_i)$ -- by a finite set $ \mathscr R \subseteq F( \mathscr A) * (*_{i \in I} \Gamma_i)$ of \textbf{relations}.
\end{enumerate}
In this case, the datum of $ \left\langle \mathscr A, \Gamma'| \mathscr R \right\rangle$ is a \textbf{finite presentation} of $(\Gamma, \Gamma')$.
\end{dfn}

\begin{ntz} \label{3o4iuhqfew} From now on, we will assume that $(\Gamma, \Gamma' = \left\{\Gamma_i\right\}_{i \in I = \left\{1, \ldots, n\right\}})$ is a finitely presented group-pair, and that $\Gamma$ is finitely generated. By a result in \cite[Proposition 2.29]{Osin}, it follows that the groups in $\Gamma'$ are finitely generated too. 
\end{ntz}

Since there exist slightly different definitions of Cayley-graph in the literature, from now on we will rely on the following one. Let $S$ be a (non-necessarily symmetric) generating set of a group $\Gamma$. The \textbf{Cayley graph $G = G(\Gamma, S)$ of $\Gamma$ w.r.t. $S$} is the graph whose $0$--skeleton is $\Gamma$ and with an edge connecting $x$ and $xs$ labelled by $(x, s)$, for any $(x, s) \in \Gamma \times S$.

Notice that $\Gamma$ acts freely and isometrically on $G(\Gamma, S)$ by mapping the vertex $x$ to $\gamma x$ and the edge $(x, s)$ to the edge $(\gamma x, s)$.

\begin{dfn} \cite[Relative Cayley complex]{GM} Let $G := G(\Gamma, S)$ be the Cayley graph of $\Gamma$, with respect to some compatible generating set $S$. Consider the graph $G^I$ constructed as follows: \begin{enumerate} 
    \item $(G^I)^{(0)} = G^{(0)} \times I$; 
    \item for any $i \in I$, $G^i := G \times \left\{i\right\}$.
For all $v \in G^{(0)}$ and $i \ne j$ there is a single edge connecting $(v, i)$ and $(v, j)$.
\end{enumerate}

We call the edges contained in some $G^i$ \textbf{horizontal}, and the other ones \textbf{vertical}.

By writing elements of $ \mathscr R$ with the alphabet $S$, we can, for every $i$, (non-uniquely) associate them to loops in $G^i$ based in $1$. We add $\Gamma$--equivariantly $2$--cells to those loops and their $\Gamma$--translates. Let $i \ne j \in I$. If $e^i$ is an edge in $G^i$ we have a corresponding edge $e^{j}$ in $G^{j}$, and two vertical edges connecting the initial and final points of $e^i$ and $e^{j}$. We add a rectangular $2$--cell to this quadrilateral. We denote by $Cay(\Gamma, \Gamma')$ the $2$--dimensional combinatorial cell complex obtained in this way, and call it the \textbf{relative Cayley-complex of $(\Gamma, \Gamma')$ (w.r.t. $S$)}.
\end{dfn}

The group $\Gamma$ naturally acts on $Cay(\Gamma, \Gamma')$. 

\begin{dfn} \label{ } The \textbf{$2$--dimensional quotient complex $\widehat{ X} = \widehat{ X}(\Gamma, \Gamma')$} is the CW-complex obtained by collapsing to points the full subcomplexes of $Cay(\Gamma, \Gamma')$ whose vertices are contained in the same left coset of $\Gamma_i \times \left\{i\right\}$, $i \in I$.

\end{dfn}

\begin{rem} \label{own478tveam} This means that, if $Y_i$ is the full subcomplex of $Cay(\Gamma, \Gamma')$ whose vertices correspond to $\Gamma_i \times \left\{i\right\}$, then all (left) $\Gamma$-translates of $Y_i$ are collapsed to points. It is easily seen that $\widehat{X}$ could be given the structure of a combinatorial complex. 

At the $0$--dimensional level, we have a natural $\Gamma$--isomorphism $\Gamma / \Gamma' := \bigsqcup_{i \in I} \Gamma / \Gamma_i \to \widehat{ X}^{(0)}$. We use it to label the vertices of $\widehat{ X}^{(0)}$ by $\Gamma / \Gamma'$. Given an horizontal edge $(x, s)$ in $G^i \subseteq Cay(\Gamma, \Gamma')$, this is either collapsed to a point in $\widehat{ X}$ if $s \in \Gamma_i$, or is left unchanged. Hence the horizontal edges of $\widehat{ X}$ are naturally labelled by the set $\bigsqcup_{i \in I} \Gamma \times (S \setminus \Gamma_i)$. Notice that vertical edges are never collapsed. 

The complex $\widehat{X}$ carries a natural $\Gamma$--action. The action on the $0$--skeleton has already been described. A cell of dimension at least $1$ in $\widehat{ X}$ corresponds to exactly one cell of the same dimension in $Cay(\Gamma, \Gamma')$, hence the action of $\Gamma$ on $ \widehat X$ is defined accordingly. Notice that, since the action of $\Gamma$ on the Cayley complex is free, the same is true for the action of $\Gamma$ on the $1$--skeleton of $\widehat X$. In particular Condition (3) of Theorem \ref{oq3847hfiuwel} holds.
\end{rem}

\begin{prop} \label{} $\widehat{X}$ is simply connected.
\end{prop}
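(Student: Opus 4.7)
The plan is to realize $\widehat{X}$ as a quotient of $Cay(\Gamma,\Gamma')$ that collapses each translate $gY_i$ of the path-connected subcomplexes $Y_i$ (the full subcomplexes corresponding to $\Gamma_i\times\{i\}$) to a point. Once this identification is in place, the standard CW fact that $\pi_1$ of a quotient by disjoint path-connected subcomplexes is obtained from the original $\pi_1$ by dividing out the normal subgroup generated by the images of their $\pi_1$'s yields
\[
\pi_1(\widehat{X}) \;\cong\; \pi_1(Cay(\Gamma,\Gamma')) \big/ N,
\]
where $N$ is normally generated by the images of $\pi_1(gY_i)$, conjugated by fixed paths from a basepoint. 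It will therefore suffice to show that $\pi_1(Cay(\Gamma,\Gamma'))$ is itself normally generated by those images.

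The next step is to use the rectangular $2$-cells to flip horizontal edges freely between the levels $G^i$: the quadrilateral with sides $e^i$, $e^j$ and two vertical edges is filled in by construction, so any loop in $Cay^{(1)}$ can be homotoped inside $Cay$ to a loop contained in a single $G^i$. Such a loop reads a word $w\in F(S)$ lying in the kernel of the evaluation map $F(S)\to\Gamma$. By the relative finite presentation $\langle\mathcal{A},\Gamma'\mid\mathscr R\rangle$, factoring this map through $F(\mathcal{A})*(*_{i\in I}\Gamma_i)\to\Gamma$ shows that the kernel is normally generated by two classes of words: (i) the elements of $\mathscr R$, rewritten in $F(S)$ by fixing, once and for all, an expression for each of the finitely many elements of $\Gamma_i$ appearing in $\mathscr R$ as a word in $S\cap\Gamma_i$; and (ii) words in $F(S\cap\Gamma_i)$ that already represent the identity in $\Gamma_i$, that is, loops in the Cayley subgraph $G(\Gamma_i,S\cap\Gamma_i)\subseteq Y_i$. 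Loops of type (i) are null-homotopic in $Cay$ thanks to the $2$-cells attached for each relator of $\mathscr R$, while loops of type (ii) lie in translates $gY_i$.

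The main technical obstacle I foresee is making the ``straightening to a single level'' step precise using only the rectangular $2$-cells, together with the careful translation between the free product $F(\mathcal{A})*(*_{i\in I}\Gamma_i)$ and the free group $F(S)$: this last step depends on a choice of $S\cap\Gamma_i$-words for the elements of $\Gamma_i$ appearing in $\mathscr R$, but the resulting conclusion should be independent of that choice. Finally, the conjugating factors in the normal-closure expression correspond exactly to paths moving the basepoint, and thus produce precisely the generators of $N$ described above, finishing the argument.
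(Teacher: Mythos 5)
Your overall architecture is the same as the paper's: the paper also obtains $\widehat{X}$ by collapsing the path-connected full subcomplexes $gY_i$ and invokes van Kampen, except that it first adds $2$--cells inside the $gY_i$ to produce an intermediate simply connected complex $Z$ and then collapses, whereas you collapse first and compute $\pi_1(Cay(\Gamma,\Gamma'))/N$ directly. Both versions hinge on the same assertion, namely that $\pi_1(Cay(\Gamma,\Gamma'))$ is normally generated by loops lying in the translates $gY_i$, and your use of the relative presentation to split the kernel of $F(S)\to\Gamma$ into rewritten relators of $\mathscr R$ (killed by the attached relator cells) and words in $F(S\cap\Gamma_i)$ that are trivial in $\Gamma_i$ (loops in $gY_i$) is the right way to try to make that precise.

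The gap sits exactly at the step you flagged: ``any loop in $Cay^{(1)}$ can be homotoped inside $Cay$ to a loop contained in a single $G^i$.'' The rectangular $2$--cells let you trade a horizontal edge $e^i$ for $u\cdot e^{\overline{\imath}}\cdot u'^{-1}$ with $u,u'$ vertical, so after pushing every horizontal edge down to a fixed level $\overline{\imath}$ you are left, between consecutive horizontal edges, with closed vertical paths inside the fibre over a single vertex $v$. That fibre is by construction a \emph{complete} graph on $I$, so for $|I|\ge 3$ it carries nontrivial loops, e.g.\ the triangle $(v,i)\to(v,j)\to(v,k)\to(v,i)$, and no $2$--cell of $Cay(\Gamma,\Gamma')$ disposes of them: relator cells are purely horizontal, while the boundary of a rectangle meets the vertical edges only in a combination $w_{ij}-u_{ij}$ (one vertical edge over each endpoint of a horizontal edge, for a single pair of levels), and one checks directly that the triangle class $u_{ij}+u_{jk}-u_{ik}$ is not a linear combination of such terms together with horizontal cycles. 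In the smallest instance ($\Gamma=\Z/2$, $S=\{s\}$, three copies of the trivial subgroup, so that nothing is collapsed and $\widehat{X}^{(2)}=Cay(\Gamma,\Gamma')$) this yields $H_1\neq 0$. So the straightening step fails as written for $|I|\ge 3$, and the vertical loops it leaves behind are accounted for neither by the relator cells nor by the collapsed subcomplexes; you would need to restrict to $|I|\le 2$, or modify the construction (a tree of vertical edges in each fibre, or extra $2$--cells along the vertical triangles), or dispose of these loops by some other argument. For what it is worth, the paper's own proof is a one-line appeal to the same unproved assertion (it simply declares that adding $2$--cells inside the $Y_i$ yields a simply connected $Z$), so it does not resolve this point either; but the obstruction is genuine and your argument does not get past it.
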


\begin{proof} Let $Y_i$ be the full subcomplex of $Cay(\Gamma, \Gamma')$ whose vertices are labelled by $\Gamma_i \times \left\{i\right\}$. For all $i \in I$, we add $\Gamma$--equivariantly $2$--cells to $Y_i$ and to its $\Gamma$--translates, in order to obtain a simply connected combinatorial complex $Z$.

The complex obtained by collapsing to points the full subcomplex of $Z$ containing $Y_i$, $i \in I$, and its $\Gamma$--translates is simply connected by an easy application of van-Kampen Theorem. Moreover, it is obviously homeomorphic to $\widehat{ X}$.
\end{proof}

We add $\Gamma$--equivariantly higher dimensional cells to $\widehat{ X}$ in order to make it a contractible combinatorial complex, that we also denote by $\widehat{ X}$, and call it the \textbf{quotient complex}. Consider the exact cellular sequence
$$ \cdots \to C_1(\widehat{ X}) \to C_0(\widehat{ X}) \to \R \to 0. $$
Recall that we have a $\Gamma$--isomorphism between the $\Gamma$--sets ${\widehat{ X}^{(0)}}$ and $\Gamma/\Gamma'$. Therefore, if $\Delta$ is the kernel of the augmentation map $\R(\Gamma / \Gamma') \to \R$ we also have the exact sequence
\begin{equation} \label{o3qhuilwjn} C_*(\widehat{ X}) \to \Delta \to 0. \end{equation}

By the following lemma, the sequence \eqref{o3qhuilwjn} provides a $\Gamma$--projective resolution of $\Delta$ (i.e., all the $\Gamma$--modules except $\Delta$ are $\Gamma$--projective).

\begin{lem} \label{} Let $X$ be a contractible CW-complex, and let $\Gamma$ act on $X$ through cellular homeomorphisms. Suppose that the stabilizers in $\Gamma$ of $1$--cells are finite. Then $C_k(X)$ is a $\Gamma$--projective module for every $k \ge 1$.
\end{lem}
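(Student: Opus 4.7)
My plan is to appeal directly to Lemma \ref{Lemma 52 MY} after choosing a convenient basis. Take $S_k$ to be the set of oriented $k$-cells of $X$, one oriented representative per cell, so that $C_k(X) = \R S_k$ is identified with the cellular chain module. Because $\Gamma$ acts on $X$ by cellular homeomorphisms, for every $\gamma \in \Gamma$ and every $s \in S_k$ the element $\gamma s$ is some oriented $k$-cell, possibly with the opposite orientation, so $\gamma s = \pm t$ for some $t \in S_k$. The only hypothesis of Lemma \ref{Lemma 52 MY} left to verify is the finiteness of $\stab_\Gamma(s)$ for every $s \in S_k$; since the orientation-stabilizer is contained in the setwise stabilizer of the underlying cell, it suffices to show that the setwise stabilizer of every $k$-cell is finite for $k \ge 1$.

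For $k=1$ this is exactly the assumption in the statement. For $k \ge 2$ I would use the standard fact that the closure $\overline{\sigma}$ of any $k$-cell $\sigma$ in a CW complex meets only finitely many open cells. The setwise stabilizer $H := \stab_\Gamma(\sigma)$ permutes these finitely many cells, yielding a homomorphism $H \to \mathrm{Sym}(\text{cells in } \overline\sigma)$ with finite image; its kernel $H_0$ has finite index in $H$ and fixes every open cell of $\overline\sigma$ setwise. If $\overline\sigma$ contains at least one $1$-cell $e$, then $H_0 \subseteq \stab_\Gamma(e)$, which is finite by hypothesis, so $H$ is finite. This takes care of every cell whose attaching sphere maps non-trivially into the $(k-1)$-skeleton.

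The one point requiring care is the potential existence of $k$-cells ($k \ge 2$) whose attaching map is constant: since $S^{k-1}$ is connected and $X^{(0)}$ is discrete, an attaching map hitting only $X^{(0)}$ would necessarily be constant at some vertex $v$, and $\overline\sigma$ would then contain no $1$-cell. In the intended application to the quotient complex $\widehat X$ this degenerate case does not occur: by construction the $2$-cells of $\widehat X$ are polygons attached along non-trivial circuits in the $1$-skeleton, and the higher-dimensional cells added to make $\widehat X$ contractible can be adjoined $\Gamma$-equivariantly in free orbits (which is the standard way to kill the successive homotopy groups equivariantly), so their stabilizers are trivial. Granting that the CW structure under consideration is of this form, the argument above shows $|\stab_\Gamma(s)| < \infty$ for every $s \in S_k$ and every $k \ge 1$, and Lemma \ref{Lemma 52 MY} gives the desired projectivity.

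The main obstacle, therefore, is not the projectivity argument itself (which is a direct application of Lemma \ref{Lemma 52 MY}) but the book-keeping that guarantees the finite-stabilizer hypothesis propagates from dimension $1$ to higher dimensions. Once one agrees to work with combinatorial cell complexes whose higher cells are attached along genuine (positive-dimensional) subcomplexes, or equivalently whose higher cells are added in free $\Gamma$-orbits, the proof is essentially a one-line reduction.
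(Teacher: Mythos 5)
Your proposal is correct and follows essentially the same route as the paper: choose orientations of the $k$--cells to get a basis of $C_k(X)$ and apply Lemma \ref{Lemma 52 MY}. The only difference is that the paper simply asserts that the stabilizer of any cell of dimension $k\ge 1$ is finite, whereas you supply the (correct) closure-finiteness/permutation argument deducing this from the hypothesis on $1$--cells, together with the observation that the degenerate cells whose closures contain no $1$--cell do not arise in the complex $\widehat X$ to which the lemma is applied.
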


\begin{proof} For $k \ge 1$, the stabilizer of any $k$--dimensional cell is finite. By (arbitrarily) choosing an orientation for every $k$--cell of $X$, we get a basis of $C_k(X)$. Then we conclude by applying Lemma \ref{Lemma 52 MY} to such a basis.
\end{proof}

We are now ready to prove Theorem \ref{main MIO theorem} (b). Most of the proof follows almost verbatim \cite[Theorem 57]{MY}. We note however that the existence of a combinatorial isoperimetric function required in the statement of Theorem 57 is never actually exploited in its proof.

\begin{proof} We will prove that $\widehat{ X}$ satisfies conditions (1), \ldots, (4) of Theorem \ref{oq3847hfiuwel}. Condition (4) is obvious. Since $\widehat{ X}^{(2)}$ is a quotient of the relative Cayley complex, the $\Gamma$--action on it is obviously cocompact, whence (1). Condition (3) was already proved in Remark \ref{own478tveam}.

Now, let $V := (B_1(\widehat{ X}); \|\cdot\|_f)$, where $B_1(\widehat{ X}) \subset C_1(\widehat{ X})$ is the set of boundaries and $\|\cdot\|_f$ is the \textbf{filling norm}
$$ \|c\|_f := \inf\{\|a\| \: a \in C_2(\widehat{ X}),\,\partial a = c\}. $$
This is actually a norm (and not just a semi-norm) because, by Condition (1), the boundary map $ \partial_2 \: C_2(\widehat{ X}) \to C_1(\widehat{ X})$ is bounded, with respect to the $\ell^1$--norms.

We have already seen that $\St_*^{\rel}(\Gamma, \Gamma')$ and $C_*(\widehat{ X})$ provide $\Gamma$--projective resolutions of $\Delta$. Hence by Lemma \ref{q3onvp31t4y2} there are, up to (non-bounded) $\Gamma$--homotopy, unique chain maps 
$$ \varphi_* \: \St_*^{\rel}(\Gamma, \Gamma') \to C_*(\widehat{ X}) \qquad \psi_* \: C_*(\widehat{ X}) \to \St_*^{\rel}(\Gamma, \Gamma') $$ that extend the identity on $\Delta$. Put
$$ u := \partial _2 \: C_2(\widehat{ X}) \to V. $$
The cochain $u$ is a cocycle. Since $\psi^2 \circ \varphi^2$ induces the identity in ordinary cohomology, there is $v \in C^1(X, V)$ such that
$$ u = \psi^2(\varphi^2(u)) + \delta v. $$
From the surjectivity hypothesis we get
$$ \varphi^2(u) = u' + \delta v', $$ for some bounded cocycle $u' \in \St^2_{\rel}(\Gamma, \Gamma'; V)$ and $v' \in C^1_{\rel}(\Gamma, \Gamma')$.
Let $b \in C_1(\widehat X)$ be a cycle, and let $a \in C_2(\widehat X)$ be a filling of $b$. Then
\begin{equation} \label{p4wtveafms}  b = \partial  a = \left\langle u,  a \right\rangle = \left\langle (\psi^2 \circ \varphi^2)(u) + \delta v,  a  \right\rangle = \left\langle  (\psi^2 \circ \varphi^2)(u),  a \right\rangle + \left\langle v,  b \right\rangle. \end{equation}
By Corollary \ref{Equation (29), p. 38} we have
$$ \left\langle (\psi^2 \circ \varphi^2)(u), a \right\rangle = \left\langle  \varphi^2(u), \psi_2(a) \right\rangle = \left\langle  \varphi^2(u), [y, \partial (\psi_2(a))]_{\rel} \right\rangle = \left\langle  \varphi^2(u), [y, \psi_1(b)]_{\rel} \right\rangle = $$ $$ = \left\langle  u' + \delta v', [y, \psi_1(b)]_{\rel}  \right\rangle = \left\langle  u', [y, \psi_1(b)]_{\rel} \right\rangle + \left\langle  v', \partial [y, \psi_1(b)]_{\rel} \right\rangle = $$ $$ = \left\langle  u', [y, \psi_1(b)]_{\rel} \right\rangle + \left\langle  v', \psi_1(b) \right\rangle = \left\langle  u', [y, \psi_1(b)]_{\rel} \right\rangle + \left\langle  \psi^1(v'), b \right\rangle. $$
Summarizing,
$$ b = \left\langle  u', [y, \psi_1(b)]_{\rel} \right\rangle + \left\langle  \psi^1(v') + v, b \right\rangle. $$
Hence
$$ |b|_f \le \Big| \left\langle  u', [y, \psi_1(b)]_{\rel} \right\rangle \Big|_f + \Big| \left\langle  \psi^1(v') + v, b \right\rangle \Big|_f \le |u'|_\infty \Big\| [y, \psi_1(b)]_{\rel} \Big\| + \Big| \psi^1(v') + v \Big|_\infty \|b\| \le $$ $$ \le 3 |u'|_\infty \|\psi_1(b)\| + \Big| \psi^1(v') + v \Big|_\infty \|b\| \le \left( 3|u'|_\infty |\psi_1|_\infty  + \Big| \psi^1(v') + v \Big|_{\infty} \right) \|b\|. $$

Hence it remains to prove that $\left( 3|u'|_\infty |\psi_1|_\infty + \Big| \psi^1(v') + v \Big|_{\infty} \right)$ is bounded. 

The cocycle $u'$ is bounded by definition. Moreover $\psi_1 \: C_1(\widehat{X}) \to \St_1^{\rel}(\Gamma, \Gamma')$ and $\psi^1(v') + v \: C_1(\widehat{X}) \to V$ are $\Gamma$--equivariant, hence also bounded by the cocompactness of the action of $\Gamma$ over ${\widehat{ X}}^{(2)}$.

It follows that $ \partial \: C_2(X, \R) \to C_1(X, \R)$ is undistorted, hence $FV_{X, \R} = FV_{X, \Q}$ is linearly bounded.
\end{proof}

\begin{rem} \label{} The proof of part (b) of Theorem \ref{main MIO theorem} could be adapted, as in \cite{MY}, by weakening the hypotheses in the statement by requiring the surjectivity only for \emph{Banach} coefficients (that is: Banach spaces equipped with an isometric $\Gamma$--action).
\end{rem}

\subsection{\label{addendum i}Addendum I: the relative cone of \cite{MY}}

We recall the definition and properties of the \emph{relative cone} given in \cite[10.1,\ldots, 10.5]{MY}. 

Consider the (non-linear) map: $ \Phi \: \St_0(\Gamma) \to \St_1(\Gamma)$ 
$$ \Phi(c) := \frac{1}{\sum_{x \in I\Gamma} \alpha_x^+} \sum_{x, y \in I\Gamma} \alpha_x^- \alpha_y^+ [x, y], $$
where $c \in \St_0$ is written as: $c = \sum_{x} \alpha_x^+ x - \sum_{x} \alpha_x^-x$ with all the $\alpha_x^+$ and $\alpha_x^-$ non-negative and, for any $x \in I\Gamma$, $\alpha_x^+ = 0$ or $\alpha_x^- = 0$. The following fact is immediate.

\begin{prop} \label{ } For every $c \in \St_0$, $\|\Phi(c)\| \le \|c\|$. If $c$ is contained in the kernel of the map $\St_0 \to \R$, then
\begin{equation} \label{puh43qiwenjkl} \partial \Phi(c) = c. \end{equation}
\end{prop}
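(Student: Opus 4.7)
The plan is a direct computation. Write $c=\sum_{x}\alpha_x^+ x-\sum_{x}\alpha_x^- x$ in the canonical form specified (so that for each $x$ at most one of $\alpha_x^\pm$ is nonzero), and set $A^+:=\sum_x \alpha_x^+$, $A^-:=\sum_x \alpha_x^-$. Then $\|c\|=A^++A^-$. The first thing to do is to dispose of the degenerate case $A^+=0$: in that case I would simply declare $\Phi(c):=0$ by convention, and observe that if additionally $c\in\ker(\St_0\to\R)$ then $A^-=A^+=0$, so $c=0$ and both conclusions are trivial. Assume henceforth $A^+>0$.

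For the norm inequality, the key point is that the vectors $[x,y]\in\St_1$ are distinct basis elements of $\St_1=\R(I\Gamma)^2$ (indexed by ordered pairs), and the coefficients $\alpha_x^-\alpha_y^+$ are nonnegative. Therefore
\[
\|\Phi(c)\|=\frac{1}{A^+}\sum_{x,y}\alpha_x^-\alpha_y^+=\frac{A^-A^+}{A^+}=A^-\le A^++A^-=\|c\|,
\]
which is what is claimed. (In fact, one obtains the slightly stronger $\|\Phi(c)\|\le\tfrac{1}{2}\|c\|$ on the augmentation kernel.)

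For the boundary identity, apply $\partial[x,y]=y-x$ termwise, swap the order of summation, and factor:
\[
\partial\Phi(c)=\frac{1}{A^+}\sum_{x,y}\alpha_x^-\alpha_y^+(y-x)=\frac{1}{A^+}\Bigl(A^-\sum_{y}\alpha_y^+\,y-A^+\sum_{x}\alpha_x^-\,x\Bigr)=\frac{A^-}{A^+}\sum_{y}\alpha_y^+\,y-\sum_{x}\alpha_x^-\,x.
\]
Now invoke the hypothesis that $c$ lies in the kernel of the augmentation $\St_0\to\R$; this is precisely the statement $A^+=A^-$, so the ratio $A^-/A^+$ equals $1$ and the right-hand side collapses to $\sum_{y}\alpha_y^+\,y-\sum_{x}\alpha_x^-\,x=c$, as desired.

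There is no real obstacle: the whole argument is a bookkeeping exercise, and the only point that could require a moment of care is the handling of the $A^+=0$ degeneracy in the definition of $\Phi$, which I would mention explicitly so that the formula in the statement is unambiguous.
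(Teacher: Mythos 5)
Your computation is correct and is exactly the direct verification the paper has in mind (the paper simply declares this proposition ``immediate'' and gives no proof). Your explicit treatment of the degenerate case $\sum_x\alpha_x^+=0$ is a sensible addition, since the defining formula for $\Phi$ is otherwise undefined there.
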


\begin{dfn} \cite[The absolute cone]{MY} Fix $y \in I\Gamma$ and $k \ge 0$. The \textbf{$k$--dimensional cone} (associated to $y$) is the map $[y, \,\cdot] \: \St_k \to \St_{k+1}$ given by
$$ [y, (\gamma_0, \ldots, \gamma_k)] := (y, \gamma_0, \ldots, \gamma_k) \qquad \forall \gamma_0, \ldots, \gamma_k \in I\Gamma $$ 
and extended over the whole $\St_k$ by linearity.
\end{dfn}

It is trivialy seen that $[y, \cdot]$ is a linear map of norm $1$ for every $k$. Moreover, 
\begin{equation} \label{34oqfuihewnjd} \partial [y, z] = z, \end{equation}
for any $k$--dimensional cycle $z$, $k \ge 0$.

Let $pr_* \: \St_* \to \St_*^{\rel}$ be the projection, and let $j_* \: \St_*^{\rel} \to \St_*$ be the obvious right inverse of $pr_*$. This map has norm $1$. For any left coset $s \in \Gamma / \Gamma'$ and $a \in \St_*$, let $ \partial^s(a)$ be the restriction of $\partial a$ to $s$.

\begin{dfn} \cite[The relative cone]{MY} Fix $y \in I\Gamma$. The \textbf{1--dimensional relative cone} (associated to $y$) is the (non-linear!) map
$$ [y, \,\cdot]_{\rel} \: \St_1^{\rel} \to \St_2^{\rel} \qquad  [y, b]_{\rel} := pr_2 \left[y, j(b) - \sum_{s \in \Gamma / \Gamma'} \Phi[ \partial ^s(j(b))]\right] \qquad \forall b \in \St_1^{\rel}(\Gamma). $$
\end{dfn}

We prove Proposition \ref{The relative cone}. Let $b \in \st_1^\rel$. 
$$ \left\| [y, b]_{\rel} \right\| = \left\| pr \left[y, j(b) - \sum_{s \in \Gamma / \Gamma'} \Phi[ \partial ^s(j(b))]\right] \right\| \le \left\| \left[y, j(b) - \sum_{s \in \Gamma / \Gamma'} \Phi[ \partial ^s(j(b))]\right] \right\| = $$ $$ = \left\| \left[j(b)  - \sum_{s \in \Gamma / \Gamma'} \Phi[ \partial ^s(j(b))]\right] \right\| \le \|j(b)\| + \sum_{s \in \Gamma / \Gamma'} \left\|  \Phi[ \partial ^s(j(b))] \right\| \le \|b\| + \sum_{s \in \Gamma / \Gamma'} \left\| \partial^s(j(b))] \right\| \le  $$ $$ \le \|b\| + \left\| \sum_{s \in \Gamma / \Gamma'}  \partial^s(j(b))] \right\| = \|b\| + \left\| \partial (j(b))] \right\| \le \|b\| + 2 \| j(b)\| \le \|b\| + 2 \|b\| = 3 \|b\|. $$
Now, let $b \in \St_1^{\rel}$ be a cycle with respect to the augmentation map
$$ \St_1^{\rel} \to \Delta \qquad pr_2 (x, y) \mapsto [y] - [x] $$
(where $[\cdot]$ refers to the class in $\Gamma / \Gamma'$). We prove that
\begin{equation} \label{4p58gijkop} \partial ^{\rel}[y, b]_{\rel} = b. \end{equation}
Write
$$ b =  \sum_i \lambda_i [x_i, y_i] $$
(if $[x_i, y_i] \notin \Gamma'$ we will identify $[x_i, y_i]$ and $pr([x_i, y_i])$). By hypothesis: 
$$ \sum_i \lambda_i ([y_i] - [x_i]) = 0 \in \Delta \subseteq \R(\Gamma / \Gamma'). $$
Equivalently, for any $s \in \Gamma/\Gamma'$: $\sum_{y_i \in s} \lambda_i - \sum_{x_j \in s} \lambda_j = 0$. Hence: $\partial^s j(b) = \sum_{y_i \in s} \lambda_i y_i - \sum_{x_j \in s} \lambda_j x_j$ is a cycle with respect to the augmentation map: $\St_0(\Gamma, \Gamma') \to \R \to 0$. Therefore we get
$$ \partial \sum_{s \in \Gamma / \Gamma'} \Phi\left( \partial ^s(j(b))\right) = \sum_{s \in \Gamma / \Gamma'} \partial \Phi \left( \partial ^s(j(b))\right) = \sum_{s \in \Gamma / \Gamma'} \partial ^s(j(b)), $$ because of \ref{puh43qiwenjkl}. Moreover
$$ \partial ^{\rel} [y, b]_{\rel} := \partial^{\rel} pr_2 \left[y, j(b) - \sum_{s \in \Gamma / \Gamma'} \Phi[ \partial ^s(j(b))]\right] = pr_2 \partial \left[y, j(b) - \sum_{s \in \Gamma / \Gamma'} \Phi[ \partial ^s(j(b))]\right] = $$ 
$$= pr_2 \left(j(b) - \sum_{s \in \Gamma / \Gamma'} \Phi[ \partial ^s(j(b))]\right) = b  $$
because $\Phi[ \partial ^s(j(b))] \in \St_2'$ for every $s \in \Gamma/\Gamma'$.

\subsection{\label{addendum ii}Addendum II: coincidence between  Mineyev-Yaman and Blank definitions of relative bounded cohomology}

We prove that Blank's definition of relative bounded cohomology for pair of groupoids, when restricted to group-pairs, coincides with the one of Mineyev and Yaman, up to isometry. 

First we briefly sketch Blank's definition of relative bounded cohomology for groupoids. For more details, see \cite[Chapter 3]{Blank}. If $G$ is a groupoid, we write ``$g \in G$'' if $g \in \morf(e, f)$, i.e. if $g$ is a morphism between two objects $e$ and $f$ of $G$. In that case we also put $s(g) = e$, $t(g) = b$. A \textbf{bounded $G$--module $V$} is a set of normed vector spaces = $\{V_e\}_{e \in \obj(G)}$ which carries a bounded groupoid $G$--action. This means that to any $g \in G$ an operator $\rho_g \: V_{s(e)} \to V_{t(e)}$ is assigned whose norm is bounded independently of $g \in G$, and the composition rule: $\rho_{g \circ h} = \rho_g \circ \rho_h$ is respected when defined (our definition of bounded groupoid module is slightly more general than that of normed $G$--module in \cite[Chapter 3.3.1]{Blank} in that we consider actions by uniformly bounded operators on normed spaces, instead of isometries on Banach spaces). If $V$ and $W$ are bounded $G$--modules, by $\hom_G^b(V, W)$ we mean the space of bounded maps $(f_e \: V_e \to W_e)_{e \in \obj(G)}$ such that $\rho_g \circ f_{s(g)} = f_{t(g)} \circ \rho_g$ and $\|f_e\| \le L$ for some $L$ independent of $e \in \obj(G)$.

To $G$ we associate the \textbf{Bar resolution} $\{\mathscr C_n(G)\}_{n \in \N}$ defined as follows. For $n \in \N$ put $\mathscr C_n(G) := \{\mathscr C_n(G)\}_{e \in \obj (G)}$, where $(\mathscr C_k(G))_e$ is the normed space generated by the $n+1$--tuples $(g_0, \ldots, g_n)$ such that $s(g_0) = e$ and $s(g_j) = t(g_{j-1})$, for $1 \le j \le n$, with the corresponding $\ell^1$--norm. The module $\mathscr C_n(G)$ is equipped with the $G$--action
$$ g \mapsto \rho_g \: \mathscr C_{s(e)}(G) \to \mathscr C_{t(e)}(G) \qquad \rho_g(g_0, g_1, \ldots, g_n) := (gg_0, g_1, \ldots, g_n). $$
For $n \ge 1$ we define the boundary map: $\mathscr C_n(G) \to \mathscr C_{n-1}(G)$ by the formula
$$ \partial(g_0, \ldots, g_n) := \sum_{j=0}^{n-1} (-1)^j (g_0, \ldots, g_i\cdot g_{i+1}, g_n) + (-1)^n (g_0, \ldots, g_{n-1}). $$
We also have an augmentation
$$ \mathscr C_0(G) \to \R G \qquad g \mapsto t(g) \cdot 1, $$
where $\R G$ is the groupoid $\{\R_e\}_{e \in \obj(G)}$, where $G$ acts on $\R G$ by mapping $g$ to the map: $\id_\R \: G_{s(e)} \to G_{t(e)}$ (see \cite[Definition 3.2.4]{Blank}). 
Notice that we have equipped $\mathscr C_k(G)$ with a structure of bounded $G$--module, and that the boundary maps are $G$--linear. 

If $(G, A)$ is a pair of groupoids (i.e. if $A$ is a subgroupoid of $G$) we have an inclusion of complexes: $ \mathscr C_*(A) \hookrightarrow \mathscr C_*(G)$. The \textbf{relative bounded cohomology of $(G, A)$ with coefficients in $V$} is then given by the cocomplex
$$ \mathscr C_b^*(G, A; V) := \left\{f \in \hom_G^b(\mathscr C_*(G), V) \: f_e {}_{\Big| \mathscr C_*(A)_e} = 0 \, \forall e \in \obj(G) \right\} $$
and is denoted by $\mathscr H_b^*(G, A; V)$ (see \cite[Definition 3.5.1(iii), (iv)]{Blank}).

Let $(\Gamma, \Gamma' = \{\Gamma_i\}_{i \in I})$ be a group-pair. Let $\Gamma_I$ be the groupoid with $\obj(G) = I$, and $\morf(i, j) = G$, for all $i, j \in \obj(G)$. If $V$ is a bounded $\Gamma$--module, then $V_I$ denotes the bounded $\Gamma_I$--module $(V_i)_{i \in \obj(\Gamma_I)}$ with $\Gamma_I$--action given by $\rho_g (v) = gv$, where $v \in V_{s(g)}$ and $gv \in V_{t(g)}$. Let $\bigsqcup_{i \in I} \Gamma_i$ be the groupoid with $\obj(\bigsqcup_{i \in I} \Gamma_i) = I$ and $\morf(i, j) = G$ if $i = j$, and $\hom(i, j) = \emptyset$ otherwise (see \cite[Definitions 3.1.10, 3.5.11, Examples 3.1.3(iii)]{Blank}). The \textbf{relative bounded cohomology of the group-pair $(\Gamma, \Gamma')$ with coefficients in $V$} is defined to be the relative bounded cohomology of the corresponding groupoid-pair $(\Gamma_I, \bigsqcup_{i \in I} \Gamma_i)$, i.e.
$$ \mathscr H_b^*(\Gamma, \Gamma'; V) := \mathscr H_b^*(\Gamma_I, \bigsqcup_{i \in I} \Gamma_i; V) $$
(see \cite[Definition 3.5.12]{Blank}).

\begin{prop} \label{are naturally isometric} Let $(\Gamma, \Gamma')$ be a group-pair, and let $V$ be a bounded $\Gamma$--module. There is a natural isometric chain isomorphism $ \mathscr C_b^*(\Gamma, \Gamma'; V) \to \st_b^*(\Gamma, \Gamma'; V)$. 
\end{prop}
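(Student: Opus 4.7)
The plan is to construct an explicit $\R$-linear chain isomorphism between Blank's groupoid bar complex and the Mineyev--Yaman complex, via the classical homogeneous/non-homogeneous bar correspondence, adapted to the groupoid $\Gamma_I$ by keeping careful track of intermediate objects.

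A basis element of $\mathscr{C}_n(\Gamma_I)_e$ is a composable tuple $(g_0, g_1, \ldots, g_n)$ in $\Gamma_I$ with $s(g_0) = e$; set $e_{j+1} := t(g_j)$. I would first introduce
$$ \Phi_e(g_0, g_1, \ldots, g_n) := \bigl((g_0, e_1), (g_0 g_1, e_2), \ldots, (g_0 g_1 \cdots g_n, e_{n+1})\bigr) \in \St_n, $$
which is a bijection of basis sets and hence an $\ell^1$-isometry. A direct calculation shows that $\Phi_e$ carries Blank's non-homogeneous boundary to the simplicial boundary on $\St_n$: merging the $j$-th consecutive pair $g_j, g_{j+1}$ corresponds to dropping the $j$-th homogeneous vertex $(\gamma_j, i_j)$, with $\gamma_j = g_0 \cdots g_j$ and $i_j = e_{j+1}$, and the final term $(-1)^n(g_0, \ldots, g_{n-1})$ drops the last vertex.

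Second I would verify that the family $(\Phi_e)_{e \in I}$ is $\Gamma_I$-equivariant, namely $\Phi_{t(g)} \circ \rho_g = g \cdot \Phi_{s(g)}$ for every morphism $g : e \to e'$ of $\Gamma_I$, where on the right $g$ acts diagonally on $\St_n$. Applying this to the group-identity $1 \in \Gamma$ viewed as a morphism $e \to e'$ shows that the maps $f_e \circ \Phi_e^{-1}$ coincide for all $e$; hence any bounded $\Gamma_I$-equivariant cochain $(f_e)_{e \in I} \in \hom^b_{\Gamma_I}(\mathscr{C}_n(\Gamma_I), V_I)$ descends under $\Phi$ to a single bounded $\Gamma$-equivariant map $\widetilde f \: \St_n \to V$ with $\|\widetilde f\| = \sup_e \|f_e\|$.

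Third I would match the relative subcomplexes. By the composability condition, a basis element of $\mathscr{C}_n(\bigsqcup_i \Gamma_i)_e$ has all intermediate objects equal to $e$ and all $g_j \in \Gamma_e$; under $\Phi_e$ it is sent to a tuple $((\gamma_0, e), \ldots, (\gamma_n, e))$ with every $\gamma_j \in \Gamma_e$. This is a proper subset of the part of $\St_n'$ indexed by $e$, but its $\Gamma$-orbit (under the diagonal action on $\St_n$) is exactly that part, since $\gamma_0 \in \Gamma$ can be recovered as $\gamma \cdot 1$ by translation. By the $\Gamma$-equivariance of $\widetilde f$ established above, vanishing of $(f_e)$ on $\mathscr{C}_n(\bigsqcup_i \Gamma_i)$ is equivalent to vanishing of $\widetilde f$ on $\St_n'$. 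Assembling the three steps gives the sought isometric chain isomorphism $\mathscr{C}_b^*(\Gamma, \Gamma'; V) \to \St_b^{\rel\,*}(\Gamma, \Gamma'; V)$. The main obstacle is the source--target bookkeeping in verifying the $\Gamma_I$-equivariance of $\Phi$: Blank's translation $\rho_g$ rebases an element of $\mathscr{C}_n(\Gamma_I)_{s(g)}$ as one of $\mathscr{C}_n(\Gamma_I)_{t(g)}$ by multiplying only the leading morphism while leaving the intermediate objects $e_1, \ldots, e_{n+1}$ untouched, and one must check that this matches the diagonal $\Gamma$-action on $\St_n$ after the substitution $\gamma_j = g_0 g_1 \cdots g_j$.
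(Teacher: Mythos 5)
Your proposal is correct and follows essentially the same route as the paper: both are the classical homogeneous/inhomogeneous bar correspondence $(g_0,\dots,g_n)\mapsto$ (partial products decorated by the intermediate objects), with the $\Gamma$--translation matching the diagonal action on $\St_n$ and the sub--bar--complex of $\bigsqcup_i\Gamma_i$ saturating under the $\Gamma$--action to exactly $\St_n'$. The only cosmetic difference is that you build the isomorphism $\Phi$ at the chain level and dualize, whereas the paper writes the dual cochain maps $\varphi^k,\psi^k$ directly relative to a fixed basepoint $\overline i\in I$; both leave the same source--target bookkeeping as a routine verification.
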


\begin{proof} We see an element in $\St_{(b)}^{\rel \, k}(\Gamma, \Gamma'; V) := \hom_{(b)}^\Gamma(\St_k^{\rel}, V)$ as a $\Gamma$--linear map $f \: \R (\Gamma \times I)^{k+1} \to V$ which is null on $St_k'$, i.e. on tuples $(x_0, \ldots, x_n)$ for which there exists $i \in I$ such that $x_j \in \Gamma \times \{i\}$ for all $0 \le j \le k$ and $x_j \in x_0\Gamma_i$ for every $1 \le j \le k$. If $i, j \in I$ and $g \in \Gamma$, we write $g^{i \to j}$ for the corresponding element in $\morf(i, j)$, and $g^i$ for the corresponding element in $\Gamma \times \left\{i\right\} \subset I\Gamma$. 

Fix $ \overline i \in I$ and consider the maps
$$ \varphi^k \: \mathscr C_b^k(\Gamma_I; V) \to St^k(\Gamma, \Gamma'; V) \qquad \psi^k \: St^k_\rel(\Gamma, \Gamma'; V) \to \mathscr C_b^k(\Gamma, \Gamma'; V) $$
defined as follows: if $f \in \mathscr C_b^k(\Gamma, \Gamma'; V)$ we put
$$ \varphi^k(f)\,\,(g_0^{i_0}, \ldots, g_n^{i_n}) := f(g_0^{i_0 \to \overline i}, (g_0^{-1} g_1)^{ i_1 \to i_0}, \ldots , (g_{n-1}^{-1} g_n)^{i_n \to i_{n-1}}). $$

If $h \in St^k_\rel(\Gamma, \Gamma'; V)$ we put
$$ \psi^k(h) \, (g_0^{i_0 \to \overline i}, g_1^{i_1 \to i_0}, \ldots, g_n^{i_n \to i_{n-1}}) = h(g_0^{i_0}, (g_0g_1)^{i_1}, \ldots (g_0\cdots g_n)^{i_n}). $$
The computations that show that $\varphi^*$ and $\psi^*$ are mutually inverse chain maps are similar to the ones that prove that the bar-resolution and the homogeneous bar-resolution are isomorphic. Indeed, they resemble dual versions of the ones in \cite[Chapter VI 13 (b)]{HiSta}. We simply note that those maps are well-defined, i.e. the restrictions of $\varphi^k(f)$ on $St_k'$ and of $\psi^k(h)$ on $\R\left(\bigsqcup_i \Gamma_i\right)^{n+1}$ are null. Indeed, if $g_0^{i}, \ldots, g_n^{i} \in (\Gamma_i \times \left\{i\right\})^{n+1}$, then $(g_0^{i \to \overline i}, (g_0^{-1} g_1)^{i \to i}, \ldots , (g_n^{-1} g_n)^{i \to i})$ is an $(n+1)$--tuple of elements in a $\Gamma$--translate of $\Gamma_i \subset \morf(i, i)$, and therefore $f$ is null on it. Conversely, if $(g_0^{i \to i}, g_1^{i \to i}, \ldots, g_n^{i \to i})$ is a tuple of elements in $\Gamma_i \subset \morf(i, i)$, then $(g_0^{i}, (g_0g_1)^{i}, \ldots (g_0\cdots g_n)^{i}) \in (\Gamma_i \times \left\{i\right\})^{n+1}$, hence $h$ is null over it.
\end{proof}

\bibliographystyle{amsalpha}
\bibliography{secondoArticoloRevisedVersion}

\providecommand{\bysame}{\leavevmode\hbox to3em{\hrulefill}\thinspace}
\providecommand{\MR}{\relax\ifhmode\unskip\space\fi MR }
\providecommand{\MRhref}[2]{%
  \href{http://www.ams.org/mathscinet-getitem?mr=#1}{#2}
}
\providecommand{\href}[2]{#2}
\begin{thebibliography}{GdlH90}

\bibitem[AG99]{AG}
F.~J. Allcock and S.~M. Gersten, \emph{A homological characterization of
  hyperbolic groups}, Invent. Math. \textbf{135} (1999), no.~3, 723--742.

\bibitem[BBF]{BestBrombFuji}
M.~Bestvina, K.~Bromberg, and K.~Fujiwara, \emph{Bounded cohomology with
  coefficients in uniformly convex banach spaces}, arXiv:1306.1542v2.

\bibitem[BE78]{BE}
Robert Bieri and Beno Eckmann, \emph{Relative cohomology and poincarè duality
  for group-pairs}, Journal of Pure and Applied Algebra \textbf{13} (1978),
  277--319.

\bibitem[BF02]{BestFuji2002}
Mladen Bestvina and Koji Fujiwara, \emph{Bounded cohomology of subgroups of
  mapping class groups}, Geometry and Topology \textbf{6} (2002), no.~1,
  69--89.

\bibitem[BH99]{BriHae}
Martin~R. Bridson and Andr\'e Haefliger, \emph{Metric spaces of non-positive
  curvature}, vol. 319, Springer Science \& Business Media, Berlin, Heidelberg,
  New York, 1999.

\bibitem[Bla14]{Blank}
Matthias Blank, \emph{Relative bounded cohomology for groupoids}, Ph.D. thesis,
  2014, epub.uni-regensburg.de/31298/1/thesis.pdf.

\bibitem[Bow]{BOW}
Brian~H. Bowditch, \emph{Relatively hyperbolic groups}, preprint,
  http://www.warwick.ac.uk/~masgak/papers/rhg.pdf.

\bibitem[Bow08]{Bow2003}
Brian~H. Bowditch, \emph{Tight geodesics in the curve complex}, Invent. Math.
  \textbf{171} (2008), no.~2, 281--300.

\bibitem[Bro81]{Brooks}
R.~Brooks, \emph{Some remarks on bounded cohomology}, Riemann surfaces and
  related topics: Proceedings of the 1978 Stony Brook Conference (State Univ.
  New York, Stony Brook, N.Y., 1978) (Princeton, N.J.) (Princeton~Univ. Press,
  ed.), vol. 97 of Ann. of Math. Stud., 1981, pp.~53--63.

\bibitem[CDP90]{CDP}
M.~Coornaert, T.~Delzant, and A.~Papadopoulos, \emph{G\'eometrie et th\'eorie
  des groupes (les groupes hyperboliques de gromov)}, Springer-Verlag, Berlin
  Heidelberg, 1990.

\bibitem[EF97]{EpsteinFujiwara1997}
David~B.~A. Epstein and Koji Fujiwara, \emph{The second bounded cohomology of
  word-hyperbolic groups}, Topology \textbf{36} (1997), no.~6, 1275--1289.

\bibitem[Fuj98]{Fujiwara1998}
Koji Fujiwara, \emph{The second bounded cohomology of a group acting on a
  gromov-hyperbolic space}, Proc. London Math. Soc. \textbf{76} (1998), no.~1,
  70--94.

\bibitem[GdlH90]{GH}
E.~Ghys and P.~de~la Harpe, \emph{Sur les groupes hyperboliques d'apr\`es
  mikhael gromov}, Birkh\"auser, Boston, 1990.

\bibitem[GM]{GM2}
Daniel Groves and Jason~Fox Manning, \emph{Relative bounded cohomology and
  relatively hyperbolic groups}, (in preparation).

\bibitem[GM08]{GM}
\bysame, \emph{Dehn filling in relatively hyperbolic groups}, Israel Journal of
  Mathematics \textbf{168} (2008), 317--429.

\bibitem[Gro82]{VaBC}
M.~Gromov, \emph{Volume and bounded cohomology}, Ins. Hautes \'Etudes Sci.
  Publ. Math. \textbf{56} (1982), 5--99.

\bibitem[Gro87]{GromovHG}
\bysame, \emph{Hyperbolic groups}, MSRI Publications \textbf{8} (1987),
  75--265.

\bibitem[HO13]{HullOsin2013}
M.~Hull and D.~Osin, \emph{Induced quasi-cocycles on groups with hyperbolically
  embedded subgroups}, Algebr. \& Geom. Topol. \textbf{13} (2013), no.~5,
  2635--2665.

\bibitem[HS71]{HiSta}
Peter~John Hilton and Urs Stammbach, \emph{A course in homological algebra},
  Berlin-Heidelberg-New York: Springer, Berlin-Heidelberg-New York, 1971.

\bibitem[Joh72]{Johnson}
Barry~Edward Johnson, \emph{Cohomology in banach algebras (memoirs of the
  american mathematical society)}, vol. 127, American Mathematical Society,
  Providence, 1972.

\bibitem[Löh07]{LohPhD}
Clara Löh, \emph{$\ell^1$-homology and simplicial volume}, Ph.D. thesis, Ph. D.
  thesis, Westf{\"a}lische Wilhelms-Universit{\"a}t M{\"u}nster, 2007,
  www.mathematik.uni-regensburg.de/loeh/theses/thesis.pdf.

\bibitem[Min99]{Min4}
Igor Mineyev, \emph{$\ell^\infty$--cohomology and metabolicity of negatively
  curved complexes}, International Journal of Algebra and Computation
  \textbf{9, No. 1} (1999), 51--77.

\bibitem[Min01]{Min1}
\bysame, \emph{Straightening and bounded cohomology of hyperbolic groups}, GAFA
  \textbf{11} (2001), 807--839.

\bibitem[Min02]{Min2}
\bysame, \emph{Bounded cohomology characterizes hyperbolic groups}, Quart. J.
  Math. Oxford Ser. \textbf{53} (2002), 59--73.

\bibitem[MP]{pedro}
Eduardo Martinez-Pedroza, \emph{A note on fine graphs and homological
  isoperimetric inequalities}, arXiv:1501.01259.

\bibitem[Mun84]{MUNKRES}
James~R. Munkres, \emph{Elements of algebraic topology}, vol.~2, Addison-Wesley
  Publishing Company, Inc., Redwood City, Calitornia Menlo Pari., California
  Reading, Massachusetts Amsterdam Don Mills, Ontario Mexico City Syrlney Bonn
  Madrid Singanore Tokyo Bogota Santiago San Juan Wokingham, United Kingdont,
  1984.

\bibitem[MY]{MY}
Igor Mineyev and Asli Yaman, \emph{Relative hyperbolicity and bounded
  cohomology}, preprint, www.math.uiuc.edu/~mineyev/math/art/rel-hyp.pdf.

\bibitem[Rol]{Rolli}
Pascal Rolli, \emph{Quasi-morphisms on free groups}, arXiv:0911.4234v2.

\end{thebibliography}

\end{document}